\DeclareMathOperator{\udeg}{%
  \vphantom{\mathrm{\deg}}%
  \underline{\smash[b]{\mathrm{\deg}}}
}
\newcommand{\uu}[1]{\underline{\underline{#1}}}
\newcommand{\uv}{\underline{v}}
\DeclareMathOperator{\uV}{\underline{\V}}
\DeclareMathOperator{\defeq}{\overset{\textnormal{\tiny{def}}}{=}}
\DeclareMathOperator\mcO{\mathcal{O}}    
\DeclareMathOperator{\N}{\mathbb{N}}
\DeclareMathOperator{\Z}{\mathbb{Z}}
\DeclareMathOperator{\Q}{\mathbb{Q}}
\DeclareMathOperator{\R}{\mathbb{R}}
\DeclareMathOperator{\C}{\mathbb{C}}
\DeclareMathOperator{\F}{\mathbb{F}}
\DeclareMathOperator{\V}{\mathbb{V}}
\DeclareMathOperator{\Primes}{\mathfrak{Primes}}
\DeclareMathOperator{\rad}{\textnormal{rad}}
\DeclareMathOperator{\good}{\textnormal{good}}
\DeclareMathOperator{\bad}{\textnormal{bad}}
\DeclareMathOperator{\multi}{\textnormal{multi}}
\DeclareMathOperator{\gen}{\textnormal{gen}}
\DeclareMathOperator{\non}{\textnormal{non}}
\DeclareMathOperator{\arc}{\textnormal{arc}}
\DeclareMathOperator{\Vol}{\textnormal{Vol}}
\DeclareMathOperator{\tmod}{\textnormal{mod}}
\DeclareMathOperator{\Gal}{\textnormal{Gal}}
\DeclareMathOperator{\GL}{\textnormal{GL}}
\DeclareMathOperator{\SL}{\textnormal{SL}}
\DeclareMathOperator{\ADiv}{\textnormal{ADiv}}
\numberwithin{equation}{section}
\theoremstyle{plain} %
\newtheorem{thm}{Theorem}[section]
\newtheorem{prop}[thm]{Proposition}
\newtheorem{lem}[thm]{Lemma}
\newtheorem{cor}[thm]{Corollary}
\newtheorem{definition}[thm]{Definition}
\theoremstyle{definition} %
\newtheorem{tiny-remark}{Remark}[thm]
\theoremstyle{plain} %
\newtheorem*{thm2}{Theorem}
\newtheorem*{cor2}{Corollary}
\newtheorem*{result-A1}{Theorem A1}
\newtheorem*{result-A2}{Corollary A2}
\newtheorem*{result-A3}{Theorem A3}
\newtheorem*{result-A4}{Corollary A4}
\newtheorem*{result-B1}{Theorem B1}
\newtheorem*{result-B2}{Corollary B2}
\newtheorem*{result-B3}{Corollary B3}
\title{The inter-universal Teichm\"uller theory and new Diophantine results over the rational numbers. I}
\author{Zhong-Peng Zhou}
\date{}
\newcommand{\Addresses}{{
\bigskip
(Zhong-Peng Zhou) \textsc{Institute for Theoretical Sciences, Westlake University, 
No. 600 Dunyu Road,
Xihu District, Hangzhou,
Zhejiang, 310030, P.R. China}
\par\nopagebreak
\textit{E-mail address}: 
\texttt{zhouzhongpeng@westlake.edu.cn}
}}
\begin{document}

\maketitle

\renewcommand{\thefootnote}{}
\footnotetext{Submitted for publication on September 11, 2024.}

\begin{abstract}

By applying inter-universal Teichm\"uller theory and its slight modification over the rational number field, we prove new Diophantine results towards effective abc inequalities and the generalized Fermat equations.

For coprime integers \(a, b, c\) satisfying \(a + b = c\) and \(\log(|abc|) \geq 700\), we prove
\[
\log|abc| \leq 3\log\mathrm{rad}(abc) + 8\sqrt{\log|abc| \cdot \log\log|abc|}.
\]
This implies for any \(0 < \epsilon \leq \frac{1}{10}\),
\[
|abc| \leq \max\left\{\exp\left(400 \cdot \epsilon^{-2} \cdot \log(\epsilon^{-1})\right),\ \mathrm{rad}(abc)^{3+3\epsilon}\right\},
\]
reducing the constant in effective abc bounds from \(1.7 \cdot 10^{30}\) (Mochizuki-Fesenko-Hoshi-Minamide-Porowski) to \(400\).

For positive primitive solutions \((x, y, z)\) to the generalized Fermat equation  \(x^r + y^s = z^t\) (\(r, s, t \geq 3\)), define \(h = \log(x^r y^s z^t)\). We prove explicit bounds:
\begin{gather*}
h \leq 573\ \ (r, s, t \geq 8); \;
h \leq 907\ \ (r, s, t \geq 5); \;
h \leq 2283\ \ (r, s, t \geq 4); \\
h \leq 14750\ \ (\min\{r, s\} \geq 4\ \text{or}\ t \geq 4); \;
h \leq 24626\ \ (r, s, t \geq 3).
\end{gather*}
These imply Fermat's Last Theorem (FLT) holds unconditionally for prime exponents \(\geq 11\). Combined with classical results for FLT with exponents \(3, 4, 5, 7\), this yields a new alternative proof of FLT. Computational verification confirms no non-trivial primitive solution exists when \(r, s, t \geq 20\) or \((r, s, t)\) is a permutation of \((3, 3, n)\) (\(n \geq 3\)).

\end{abstract}

\makeatletter
\@starttoc{toc}
\makeatother

\section*{Introduction}
\addcontentsline{toc}{section}{Introduction}

In the recent paper \cite{ExpEst}, Shinichi Mochizuki, Ivan B. Fesenko, Yuichiro Hoshi, Arata Minamide, and Wojciech Porowski established a $\mu_6$-version of Mochizuki's inter-universal Teichm\"uller theory (IUT theory) [cf. \cite{IUTchI,IUTchII,IUTchIII,IUTchIV}],
proving the following diophantine results [cf. \cite{ExpEst}, Theorem B and Corollary C]:

\begin{thm2}\textbf{(Effective version of a conjecture of Szpiro)}
     Let $a, b, c$ be non-zero coprime integers such that $a+b+c = 0$; 
     $\epsilon$ a positive real number $\le 1$. 
     Then we have
	\begin{align*}
	    |abc| \le 2^4 \cdot \max\{ \exp(1.7 \cdot 10^{30} \cdot \epsilon^{-166/81}, \rad(abc)^{3+3\epsilon}
	     \}.
	\end{align*}
\end{thm2}

 \begin{cor2}\textbf{(Application to ``Fermat's Last Theorem'')}
      Let $$p > 1.615 \cdot 10^{14}$$ be a prime number. 
      Then there does not exist any triple $(x,y,z)$ of positive integers that satisfies the Fermat equation $$x^p + y^p = z^p.$$
 \end{cor2}

In the present paper, we slightly modify some of the methods in [38, 39], in the special case of rational numbers, to estimate the log-volumes of  [$\mu_6$-]initial $\Theta$-data more precisely. Then by focusing on the Frey-Hellegouarch curves, we prove new effective versions of abc inequalities for rational numbers:

\begin{result-A1}
Let $(a,b,c)$ be a triple of non-zero coprime integers such that $a+b=c$.

(i) Suppose that $\log(|abc|)\ge 700$, then we have
\begin{align*}
    \log(|abc|) \le 3\log\rad(abc) + 8 \sqrt{\log(|abc|)\cdot \log\log(|abc|)}.
\end{align*} 

(ii) Suppose that $\log(|abc|)\ge 3\cdot 10^{13}$, then we have
\begin{align*}
    \log(|abc|) \le 3\log\rad(abc) + 3&\sqrt{\log(|abc|)\cdot \log\log(|abc|)}
    \\ &\cdot (1 + \frac{6}{\sqrt{\log\log(|abc|)}} + \frac{15}{\log\log(|abc|)}).
\end{align*} 
\end{result-A1}

\begin{result-A2} 
Let $(a,b,c)$ be a triple of non-zero coprime integers such that $a+b=c$;
$\epsilon$ be a positive real number $\le \frac{1}{10}$. Then we have 
\begin{align*}
    |abc| \le \max\{
    \exp\left(400 \cdot \epsilon^{-2} \cdot \log(\epsilon^{-1} ) \right),
    \rad(abc)^{3+3\epsilon} \}.
\end{align*}
\end{result-A2}

We will also conduct a preliminary study on the generalized Fermat equations.
Let $r,s,t\ge 2$ be positive integers. The equation
\begin{equation*}
x^r + y^s = z^t, \;\text{with}\; x, y, z\in\Z, \; xyz\neq 0
\end{equation*}
is known as the generalized Fermat equation with signature $(r, s, t)$.
A solution $(x, y, z)$ is called primitive if $\gcd(x, y, z)=1$, and positive if $x, y, z \ge 1$.

\begin{result-B1}\label{3-prop:upper bound of solutions}
Let $r,s,t\ge 3$ be positive integers.
Write
\begin{align*}
S(r,s,t) \defeq \{ (x,y,z)\in \Z_{\ge 1}^3  :  x^r + y^s = z^t,\, \gcd(x,y,z) = 1 \}
\end{align*}
for the finite set of positive primitive solutions to the generalized Fermat equation with signature $(r,s,t)$. 
Define
\begin{align*}
f(r,s,t) \defeq \sup\limits_{(x,y,z) \in S} \log(x^r y^s z^t) \in  \R_{\ge 0},
\end{align*}
where we shall write $f(r,s,t) = 0$ if $S(r,s,t)$ is an empty set.

Then we have $f(r,s,t) \le 573$ for $r,s,t\ge 8$;
$f(r,s,t) \le 907$ for $r,s,t\ge 5$;
$f(r,s,t) \le 2283$ for $r,s,t\ge 4$;
$f(r,s,t) \le 14750$ for $\min\{r,s\} \ge  4$ or $t\ge 4$;
and $f(r,s,t) \le 24626$ for $r,s,t\ge 3$.
\end{result-B1}

\begin{result-B2} 
Let $$p\ge 11$$ be a prime number. 
Then there does not exist any triple $(x, y, z)$ of positive integers that satisfies the Fermat equation
$$x^p + y^p = z^p.$$
\end{result-B2}

Corollary B2, combined with the classical results about the non-existence of positive integer solutions $(x,y,z)$ of the Fermat equation 
$$x^n + y^n = z^n$$ 
for $n=3$ (Euler), $n=4$ (Fermat), $n=5$ (Dirichlet and Legendre) and $n=7$ (Gabriel Lamé), yields an \textbf{unconditional new alternative proof}  [i.e., to the proof of \cite{Wiles} and a certain simplification of its second proof in [39]] of \textbf{Fermat's Last Theorem}.

\begin{result-B3} 
Let $r,s,t$ be positive integers such that $r,s,t \ge 20$, or $(r,s,t)$ is a permutation of $(3,3,n)$ for $n\ge 3$.
Then there does not exist any triple $(x, y, z)$ of non-zero coprime integers that satisfies the generalized Fermat equation
$$x^r + y^s = z^t.$$
\end{result-B3}

In the subsequent paper, we will introduce new insights to further explore the generalized Fermat equations. We will establish new upper bounds for non-trivial primitive solutions and demonstrate the non-existence of such solution for an expanded set of signatures.

The present paper is organized as follows.
In Section 1, we provide an estimation for the log-volume of $\mu_6$-initial $\Theta$-data [cf. Proposition \ref{1-prop:Estimates the log-volume of initial Theta-data}],
which is a slightly modified version of \cite{IUTchIV}, Theorem 1.10 and \cite{ExpEst}, Theorem 5.1.
In Section 2, for any triple of non-zero coprime integers $(a, b, c)$ such that $a+b=c$,
we first construct the $\mu_6$-initial $\Theta$-data associated to the Frey-Hellegouarch curve $y^2 = x(x-a)(x+b).$
Then by applying the estimation in Section 1 to these $\mu_6$-initial $\Theta$-data, 
we obtain ``partial abc inequalities'' associated to $(a,b,c)$.
Section 3 is devoted to prove several effective abc inequalities.
Finally, in Section 4, we study on the upper bounds for positive primitive solutions to the generalized Fermat equations.

\section*{Acknowledgements}

The results of the present paper were obtained by its author when studying on his own initiative the relevant research papers and then working alone on this paper in Beijing in 2023-2024. Sharing the first draft of the paper with Shinichi Mochizuki and Ivan Fesenko led to their encouragement which is gratefully acknowledged. The author is also grateful to Ivan Fesenko for discussions of some topics of the present paper during his short visits to Westlake University.

\setcounter{section}{-1}
\section{Notations and Conventions}

\noindent\textbf{General Notations:}

For a real number $\lambda$, we shall write $\lfloor\lambda\rfloor$ for the largest integer $n\le \lambda$, and write $\lceil\lambda\rceil$ for the smallest integer $n\ge \lambda$.
For a finite set $I$, we shall write $|I|$ for the the cardinality of $I$. 
For a finite field extension $K/L$, we shall write $[K:L]$ for the extension degree $\dim_L K$. 
For a field $K$ or a ring $R$, we shall write $K^{\times}$ or $R^{\times}$ for the group of invertible elements in $K$ or $R$. 

Let $n$ be a non-zero integer. 
The radical of $n$, denoted by $\rad(n)$, 
is the product of the distinct prime factors of $n$.
For any integer $k\ge 2$, the $k$-radical of $n$, denoted by $\rad_k(n)$,
is the product of the distinct prime factors $p$ of $n$, 
where the $p$-adic valuation $v_p(n)$ of $n$ is not divided by $k$.
Hence we have $$\rad(n)=\prod_{p\mid n} p, \quad \rad_k(n)=\prod_{k\nmid v_p(n)}p. $$

Let $p$ be a prime number.
We shall write $\Q$ (respectively, $\R$; $\C$; $\Q_p$; $\F_p$) for the field of rational numbers (respectively, real numbers; complex numbers; $p$-adic rational numbers; the finite field with $p$ elements), 
and write $\Z$ (respectively, $\Z_p$) for the ring of rational integers (respectively, the ring of $p$-adic integers).

We shall call finite extension of $\Q$ number field, call finite extension of $\Q_p$ $p$-adic local field, call $\R$ real archimedean local field, and call $\C$ complex archimedean local field. 

We shall write $\Primes$ for the set of prime numbers. 
For each prime number $l\ge 3$, we shall put $l^{\divideontimes} \defeq \frac{l-1}{2}$.

\hfill \break
\noindent\textbf{Number Fields:}

Let $L$ be an arbitrary number field. 
We shall use $\V(L)$ to denote the set of valuations of $L$, 
and use $\V(L)^{\non}$ (respectively, $\V(L)^{\arc}$) to denote the set of nonarchimedean (respectively, archimedean) valuations of $L$. 
Here, all the valuations are assumed to be normalized.

For the case $L = \Q$, we shall put $\V_{\Q} \defeq \V(\Q)$, 
and put $\V_{\Q}^{\square} \defeq \V(\Q)^{\square}$ for $\square\in\{\non,\arc\}.$ 
Then we have $\V_{\Q}^{\non} = \{v_p: p\in \Primes\}$ and $\V_{\Q}^{\arc} = \{v_{\R}\}$, 
where $v_p$ is the $p$-adic valuation on $\Q$, 
and $v_{\R}$ is the unique archimedean valuation on $\Q$. 
For a prime number $p$ and a rational number $x$, 
if $v_p(x)\ge 1$, then we shall say $p$ divides $x$, and write it as $p\mid x$;
if $v_p(x)\le 0$, then we shall say $p$ does not divide $x$, and write it as $p\nmid x$.

Let $K/L$ be a finite extension of number fields. 
For each $v\in \V(L)$, we shall use $\V(K)_v$ to denote the [finite] set of valuations of $K$ lying over $v$. 
Then we have $$\sum_{\uv \in \V(K)_v} [K_{\uv}:L_v] = [K:L].$$

An [$\R$-]arithmetic divisor on $L$ is a finite formal sum $\mathfrak{a} = \sum_{v\in \V(L)} c_v \cdot v$, where $c_v\in\R$. 
We shall use $\ADiv_{\R}(L)$ to denote the abelian group of [$\R$-]arithmetic divisors on $L$. The degree of $\mathfrak{a}$ is defined by the formula 
\begin{align*}
	\deg_L(\mathfrak{a}) \defeq \sum_{v\in \V(L)^{\non}}c_v\cdot \log\big(|\kappa(L_v)|\big) + \sum_{v\in \V(L)^{\arc}} c_v,
\end{align*}
where $\kappa(L_v)$ is the [finite] residue field of $L_v$. 
The normalized degree of $\mathfrak{a}$ is defined by the formula
$$\udeg(\mathfrak{a}) \defeq \frac{1}{[L:\Q]}\deg_L(\mathfrak{a}). $$ 
Thus for any finite extension $K$ of $L$, we have $\udeg(\mathfrak{a}|_K ) = \udeg(\mathfrak{a}),$ where we write $\mathfrak{a}|_K$ for the the pull-back [defined in the evident fashion] of $\mathfrak{a}$ to an arithmetic divisor on $K$.

\hfill \break
\noindent\textbf{Local Fields:}

For an archimedean local field $k$ [i.e. $\R$ or $\C$] with absolute value $| \cdot |_k$, we shall write
$$\mcO_{k} \defeq \{x:x\in k, |x|_k \le 1 \}$$ for the unit ball of $k$.

Let $p$ be a prime number, $k$ be a $p$-adic local field. The $p$-adic valuation $v_p$ on $\Q$ extends naturally to $\Q_p$ and $k$, and we shall write $\mcO_k$ for the ring of integers of $k$. We shall call the  $p$-adic valuation of any generator of the different ideal of $\mcO_k$ over $\Z_p$ as the \textbf{different index} of $k$.

Let $e$ be the ramification index of $k$ over $\Q_p$, $\pi_k \in k$ be a uniformizer of $k$, $R_k$ be the group of multiplicative representatives in the unit group of $\mcO_k$. Then we have an group isomorphism 
$$k^\times \cong \pi_k^{\Z}\times R_k \times (1+\pi_k\mcO_k).$$
For each $x = \pi_k^a\cdot u\cdot (1+t) \in k^\times$ according to the this group isomorphism, 
the $p$-adic logarithm function $\log_p: k^\times \to k$ is defined by the formula
$$\log_p(x) = \log_p(1+t) = \sum_{n=1}^\infty (-1)^{n+1}\frac{t^n}{n}.$$
For each $t \in k^\times$ with $v_p(t)>\frac{1}{p-1}$, 
the $p$-adic exponent function $\exp_p:\{t\in k: v_p(t)>\frac{1}{p-1}\}\to k^\times$ is defined by the formula
$$\exp_p(t) = \sum_{n=0}^\infty \frac{t^n}{n!}. $$
For each $t\in k$ with $v_p(t)>\frac{1}{p-1}$, we have $\log_p(\exp_p(t)) = t.$

\hfill \break
\noindent\textbf{Curves:}

Let $K$ be a field of characteristic zero with an algebraic closure $\overline{K}$,
$E$ be an elliptic curve defined over $K$, $n\ge 1$ be an positive integer. 
Then we shall write $E[n]$ for the group of $n$-torsion points of $E_{\overline{K}}\defeq E\times_K \overline{K}$, and write $K(E[n])\subseteq \overline{K}$ for the ``$n$-torsion point field" of $E$, 
i.e. the field generated by $K$ and the coordinates of the points in $E[n]$.

\section{Modified Estimation for Log-volumes}

This section is devoted to the estimation for log-volumes in IUT theory.
We shall prove Proposition \ref{1-prop:Estimates the log-volume of initial Theta-data}, which is a modified version of \cite{IUTchIV}, Theorem 1.10 and \cite{ExpEst}, Theorem 5.1.
The majority of the results in this section can be viewed as slight modifications of \cite{IUTchIV}, Section 1 and \cite{ExpEst}, Theorem 5.1.

\subsection{Estimation for Local Log-volumes}
We shall begin with some estimation for local log-volumes.

\begin{lem} \label{1-lem:Estimates of Normalization}
Let $\{k_i\}_{i\in I}$ be a finite set of $p$-adic local fields [cf. \S0], where $|I|\ge 2$. 
For each $i\in I$, write 
$\pi_i$ for a uniformizer of $k_i$;
$\mcO_{k_i}$ for the ring of integers of $k_i$;
$e_i$ for the ramification index of $k_i$ over $\Q_p$;
$d_i \in \frac{1}{e_i}\Z$ for the different index [cf. \S0] of $k_i$.

(i) We shall write 
\begin{equation*}
R_I \defeq \bigotimes_{i\in I}\mcO_{k_i}, 
\; \log_p(R_I^\times) \defeq \bigotimes_{i\in I}\log_p(\mcO_{k_i}^\times),
\end{equation*}
where the tensor product is over $\Z_p$; write  $(R_I)^\sim$ for the normalization of $R_I$ in its ring of fractions $R'$.
Then we have 
\begin{equation*}
R' = \Q_p \otimes_{\Z_p} R_I = \Q_p \otimes_{\Z_p} \log_p(R_I^\times) = \bigotimes_{i\in I}k_i, 
\end{equation*}
where the last tensor product is over $\Q_p$.
We also have direct sum decompositions
$$R' = \bigoplus_{i=1}^{n_{R'}}L_i 
\quad \text{and} \quad (R_I)^\sim = \bigoplus_{i=1}^{n_{R'}}\mcO_{L_i},$$ 
where for each $1\le i\le n_{R'}$, $L_i$ is a $p$-adic local field with ring of integers $\mcO_{L_i}$.

(ii) For each
\begin{equation*}
\overrightarrow{\lambda} = (\lambda_i)_{i\in I} \in \Lambda_I \defeq \bigoplus_{i\in I}\frac{1}{e_i}\cdot\Z,
\end{equation*}
we shall call 
\begin{equation*}
\deg(\overrightarrow{\lambda}) \defeq \sum_{i\in I}\lambda_i 
\in \Lambda \defeq \frac{1}{\gcd_{i\in I}\{e_i\}}\Z
\end{equation*}
the degree of $\overrightarrow{\lambda}$, and write 
\begin{equation*}
p^{\overrightarrow{\lambda}} \defeq \otimes_{i\in I}\pi_i^{\lambda_ie_i} \in R_I.
\end{equation*}
Then the set $p^{\overrightarrow{\lambda}} \cdot (R_I)^\sim$ is determined by $\deg(\overrightarrow{\lambda})$ 
[i.e. independent of the choice of $\pi_i$, $\lambda_i$ for $i\in I$],
and we shall write 
\begin{equation*}
p^{\deg(\overrightarrow{\lambda})}\cdot (R_I)^\sim\defeq p^{\overrightarrow{\lambda}} \cdot (R_I)^\sim.
\end{equation*}
Moreover, for any $\lambda, \lambda'\in\Lambda$ with $\lambda \ge \lambda'$, 
we have $p^{\lambda}\cdot (R_I)^\sim \subseteq  p^{\lambda'}\cdot (R_I)^\sim$.

(iii) Suppose that $d_* = \max\limits_{i\in I}\{d_i\}$ for some $*\in I$. View $d_*\in \frac{1}{e_*}\cdot\Z \hookrightarrow \Lambda_I$ as an element $\overrightarrow{d_*}\in \Lambda_I$, and write 
$$\overrightarrow{d_I} \defeq (d_i)_{i\in I}\in \Lambda_I, 
\; \overrightarrow{d_I}' \defeq \overrightarrow{d_I} - \overrightarrow{d_*} \in \Lambda_I,$$
$$ d_I\defeq \deg(\overrightarrow{d_I}) = \sum_{i\in I}d_i,
\; d'_I\defeq \deg(\overrightarrow{d_I}') = d_I- \max_{i\in I}\{d_i\}.
$$
Then we have
$$p^{d'_I}(R_I)^\sim \subseteq R_I \subseteq (R_I)^\sim.$$
\end{lem}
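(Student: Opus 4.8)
The plan is to work entirely inside the semisimple $\Q_p$-algebra $R' = \bigotimes_{i\in I} k_i$ (tensor over $\Q_p$), using the decomposition $R' = \bigoplus_{j=1}^{n_{R'}} L_j$ and $(R_I)^\sim = \bigoplus_j \mcO_{L_j}$ from part (i). The second inclusion $R_I \subseteq (R_I)^\sim$ is immediate from the definition of normalization, so the whole content is the first inclusion $p^{d_I'}(R_I)^\sim \subseteq R_I$. The idea is that $(R_I)^\sim$ differs from $R_I$ only by the failure of the tensor product of the $\mcO_{k_i}$ over $\Z_p$ to be integrally closed, and this failure is controlled by the differents of the $k_i$ over $\Z_p$ — more precisely, by how much the differents "overlap." Subtracting off the largest different $d_*$ (which is in some sense "already accounted for" by a single factor that can be chosen to generate the compositum) leaves the correction term $d_I' = \sum_i d_i - \max_i d_i$.

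First I would pick, for each $i \in I$, an element $\theta_i \in \mcO_{k_i}$ generating $\mcO_{k_i}$ as a $\Z_p$-algebra (possible since $\Z_p$-orders in local fields are monogenic — e.g. take $\theta_i$ to be a suitable combination of a uniformizer and a generator of the residue field, or invoke that $\mcO_{k_i} = \Z_p[\theta_i]$), with minimal polynomial $g_i \in \Z_p[T]$. Then $R_I = \bigotimes_i \mcO_{k_i} = \Z_p[T_i : i\in I]/(g_i(T_i))$, and $R' = \Q_p[T_i]/(g_i(T_i))$. The different of $\mcO_{k_i}/\Z_p$ is generated by $g_i'(\theta_i)$, whose valuation is $d_i$. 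The key algebraic input is the standard fact (Euler/Dedekind; cf. Serre, \emph{Local Fields}, Ch. III, or the conductor-different comparison used in Mochizuki--Fesenko--Hoshi--Minamide--Porowski) that for a finite product decomposition of an étale-type algebra, multiplication by the product $\prod_i g_i'(\theta_i)$ carries the normalization into the subring generated by the $\theta_i$; i.e. $\big(\prod_i g_i'(\theta_i)\big)\cdot (R_I)^\sim \subseteq R_I$. This already gives $p^{d_I}(R_I)^\sim \subseteq R_I$. To upgrade $d_I$ to the smaller $d_I' = d_I - d_*$, I would argue that one of the factors $L_j$ — the one through which $k_*$ maps "with full ramification/residue contribution" — can absorb the factor $g_*'(\theta_*)$ for free: concretely, after base-changing $R_I$ along the unramified or the relevant extension so that $k_*$ becomes a direct factor of the base ring, the component at $*$ contributes no different correction, and only the remaining $|I|-1$ factors contribute, summing to $d_I'$. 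This is where part (ii) — the fact that $p^\lambda (R_I)^\sim$ depends only on $\deg(\overrightarrow\lambda)$ and the monotonicity $p^\lambda(R_I)^\sim \subseteq p^{\lambda'}(R_I)^\sim$ for $\lambda \ge \lambda'$ — is used to reduce everything to a single rational exponent and to legitimize replacing $\overrightarrow{d_I}$ by the scalar $d_I'$.

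The main obstacle I anticipate is precisely the passage from the crude bound with $d_I = \sum_i d_i$ to the sharp bound with $d_I' = \sum_i d_i - \max_i d_i$: it requires showing that the maximal different among the $k_i$ is genuinely redundant, not just heuristically. I expect the cleanest route is to induct on $|I|$, peeling off the factor $k_*$ with the largest different: write $I = I' \sqcup \{*\}$, apply the inductive hypothesis to get $p^{d_{I'}'}(R_{I'})^\sim \subseteq R_{I'}$ where $d_{I'}' = \sum_{i \in I'} d_i - \max_{i\in I'} d_i$, then tensor up by $\mcO_{k_*}$ and observe that $R_I = R_{I'} \otimes_{\Z_p} \mcO_{k_*}$, $(R_I)^\sim = (R_{I'})^\sim \otimes_{\Z_p}\mcO_{k_*}$ (the latter because tensoring a normal semilocal ring by an integrally closed finite flat algebra stays normal after the base is already normalized — or one re-normalizes and the extra different contribution is exactly $d_*$ minus the overlap, bounded by $d_*$). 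Carefully, tensoring with $\mcO_{k_*}$ over $\Z_p$ contributes an additional different of valuation at most $d_*$, and combined with $d_{I'}' + \min\{d_*, \max_{i\in I'} d_i\}$-type bookkeeping one lands on $d_I'$. I would also need the base case $|I| = 2$ checked directly: $(R_{\{1,2\}})^\sim / (\mcO_{k_1}\otimes_{\Z_p}\mcO_{k_2})$ is annihilated by an element of valuation $\min\{d_1, d_2\}$, which equals $d_1 + d_2 - \max\{d_1,d_2\} = d_{\{1,2\}}'$, a computation that can be done via the explicit different of the compositum $k_1 k_2$ inside $\overline{\Q_p}$ together with tower formulas for differents. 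Once the base case and the inductive step are in place, part (ii)'s monotonicity finishes the proof.
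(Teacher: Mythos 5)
The paper itself does not prove part (iii); it simply cites \cite{IUTchIV}, Proposition 1.1. So you are attempting to supply a proof that the paper omits, which is a reasonable thing to do. Your overall framework — monogenic generators $\theta_i$ with $\mcO_{k_i} = \Z_p[\theta_i]$, the conductor lemma $g_i'(\theta_i)\cdot(R_I)^\sim \subseteq R_I$, part (ii) to convert multi-indices to scalar exponents, and an induction on $|I|$ — is sound, and your base case $|I|=2$ is correct: by viewing $R_I$ as $\mcO_{k_2}[\theta_1]$ and as $\mcO_{k_1}[\theta_2]$ one gets both $p^{d_1}(R_I)^\sim\subseteq R_I$ and $p^{d_2}(R_I)^\sim\subseteq R_I$, hence $p^{\min\{d_1,d_2\}}(R_I)^\sim\subseteq R_I$, and $\min\{d_1,d_2\}=d'_{\{1,2\}}$.

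The gap is in the inductive step, and specifically in the choice to peel off the factor $k_*$ with the \emph{largest} different. Write $I' = I\setminus\{*\}$ and decompose $(R_{I'})^\sim = \bigoplus_j \mcO_{L_j}$. After tensoring the inductive inclusion $p^{d'_{I'}}(R_{I'})^\sim \subseteq R_{I'}$ with $\mcO_{k_*}$, what you still need is a bound on the conductor of $\mcO_{L_j}\otimes_{\Z_p}\mcO_{k_*}$ in its normalization. Using $\mcO_{L_j}$ as base and $\theta_*$ as generator, the conductor lemma gives exponent $v_p(g_*'(\theta_*)) = d_*$; using $\mcO_{k_*}$ as base and a $\Z_p$-generator of $\mcO_{L_j}$, it gives exponent $d_{L_j}$ (the different index of $L_j$ over $\Q_p$). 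But since $L_j$ contains (a conjugate of) every $k_i$ with $i\in I'$, Lemma~\ref{1-lem: Estimates of Differents}(i) forces $d_{L_j}\ge \max_{i\in I'} d_i$, so the best conductor bound available is $\min\{d_*, d_{L_j}\}\ge \max_{i\in I'} d_i$, and in general this inequality is strict. Your bookkeeping $d'_{I'} + \min\{d_*, \max_{i\in I'} d_i\}$ tacitly replaces $d_{L_j}$ by $\max_{i\in I'} d_i$, which is not justified, and with the honest bound $d'_{I'}+d_*$ the total exponent comes out to $\sum_{i\in I} d_i - \max_{i\in I'} d_i \ge d'_I$, which is a \emph{weaker} inclusion (recall larger exponent means smaller ideal, so proving $p^{\mu}(R_I)^\sim\subseteq R_I$ for $\mu > d'_I$ does not give the claim).

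The fix is to reverse the order of the induction: peel off the factor $i_0$ with the \emph{smallest} different. Then $I' = I\setminus\{i_0\}$, the max over $I'$ still equals $d_*$, the conductor contribution from tensoring with $\mcO_{k_{i_0}}$ (using each $\mcO_{L_j}$ as base and $\theta_{i_0}$ as generator) is $d_{i_0}$, and the total is $d'_{I'} + d_{i_0} = \bigl(\sum_{i\in I'} d_i - d_*\bigr) + d_{i_0} = \sum_{i\in I} d_i - d_* = d'_I$, exactly as required. (Equivalently: first apply the two-variable argument to the pair $\{*,\,\arg\max_{I'}\}$ to win $\min\{d_*, \max_{I'} d_i\} = \max_{I'} d_i$, and then add each remaining factor one at a time, contributing $d_i$ each.) With this reversal and your correctly handled base case, the proof goes through.
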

\begin{proof}
Assertions (i) and (ii) can be proved by definition.
Assertion (iii) follows immediately from \cite{IUTchIV}, Proposition 1.1.
\end{proof}

\begin{lem} \label{1-lem: Estimates of Differents}
Let $k/k_0$ be a finite extension of $p$-adic local fields. Let $e$, $e_0$ be the ramification indices of $k$ and $k_0$ over $\Q_p$ respectively. Let $d$, $d_0$ be the different indices [cf. \S0] of $k$ and $k_0$ respectively. Then:

(i) We have $d + \frac{1}{e} \ge d_0 + \frac{1}{e_0} $.
Moreover, if $k$ is tamely ramified over $k_0$, then we have $d + \frac{1}{e} = d_0 + \frac{1}{e_0}$.

(ii) Suppose that $k$ is a finite Galois extension of an intermediate subfield $k_1$, such that $k_1$ is tamely ramiﬁed over $k_0$. Then we have $$d \le d_0 + \frac{1}{e_0} + v_p([k:k_1]). $$

(iii) For the case of $k_0=\Q_p$, we have:
\begin{itemize}
    \item If $p\nmid e$, then $d=1-\frac{1}{e}$; if $e=1$, then $d=0$.
    \item If $k$ is Galois over $\Q_p$, then $d\le 1+v_p(e)$.
\end{itemize}
\end{lem}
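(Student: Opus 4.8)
The plan is to treat the three assertions in order, reducing everything to standard facts about different exponents in local field theory. For (i), I would use the tower formula for the different: if $k/k_0/\Q_p$, then $\mathfrak{d}_{k/\Q_p} = \mathfrak{d}_{k/k_0}\cdot(\mathfrak{d}_{k_0/\Q_p}\mcO_k)$, so in terms of $p$-adic valuations $d = v_p(\mathfrak{d}_{k/k_0}) + d_0$. The key point is that $v_p(\mathfrak{d}_{k/k_0})\ge \frac{e/e_0 - 1}{e}$ always (with equality in the tame case, by the classical formula $v_k(\mathfrak{d}_{k/k_0}) = e/e_0 - 1$ for tame extensions), while $\frac{1}{e_0} - \frac{1}{e} = \frac{e/e_0 - 1}{e}$. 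Combining, $d - d_0 = v_p(\mathfrak{d}_{k/k_0}) \ge \frac{1}{e_0} - \frac{1}{e}$, which rearranges to the claimed inequality, and equality holds precisely when the local different estimate is sharp, i.e. in the tame case.

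For (ii), I would split the tower as $k/k_1/k_0$. Since $k_1/k_0$ is tame, part (i) gives $d_1 + \frac{1}{e_1} = d_0 + \frac{1}{e_0}$, so it suffices to bound $d$ in terms of $d_1$. Writing $d = v_p(\mathfrak{d}_{k/k_1}) + d_1$, I need $v_p(\mathfrak{d}_{k/k_1}) \le \frac{1}{e_1} + v_p([k:k_1])$. For the Galois extension $k/k_1$ with ramification index $e_{k/k_1} = e/e_1$, the standard bound on the different exponent is $v_{k}(\mathfrak{d}_{k/k_1}) \le e_{k/k_1} - 1 + v_{k}([k:k_1])$ — this is the usual estimate coming from the filtration of ramification groups (the wild part contributes at most $v_p$ of the order of the inertia subgroup, measured appropriately). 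Dividing by $e$ to pass to $v_p$, the term $\frac{e_{k/k_1}-1}{e} = \frac{1}{e_1} - \frac{1}{e} \le \frac{1}{e_1}$, and $\frac{v_k([k:k_1])}{e} = v_p([k:k_1])$; so $v_p(\mathfrak{d}_{k/k_1}) \le \frac{1}{e_1} + v_p([k:k_1])$, and adding $d_1 = d_0 + \frac{1}{e_0} - \frac{1}{e_1}$ yields $d \le d_0 + \frac{1}{e_0} + v_p([k:k_1])$ as desired.

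Assertion (iii) is the special case $k_0 = \Q_p$, where $e_0 = 1$ and $d_0 = 0$. The first bullet is immediate from (i): if $p\nmid e$ then $k/\Q_p$ is tame, so $d + \frac{1}{e} = 0 + 1$, giving $d = 1 - \frac{1}{e}$; and $e=1$ forces $d=0$ since the extension is unramified. The second bullet follows from (ii) by taking $k_1$ to be the maximal tamely ramified subextension of $k/\Q_p$ (equivalently, the fixed field of the wild inertia, which is normal since $k/\Q_p$ is Galois): then $k_1/\Q_p$ is tame, and (ii) gives $d \le 0 + 1 + v_p([k:k_1])$; since $[k:k_1]$ is a power of $p$ dividing $e$, we have $v_p([k:k_1]) = v_p(e)$ (the wild inertia is a $p$-group whose order is exactly $p^{v_p(e)}$), so $d \le 1 + v_p(e)$.

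The main obstacle I anticipate is pinning down the precise form of the "universal" bound on the wild different exponent used in (ii) — namely $v_k(\mathfrak{d}_{k/k_1}) \le e_{k/k_1} - 1 + v_k([k:k_1])$ — and making sure the choice of intermediate field $k_1$ (tame over $k_0$, with $k/k_1$ Galois) is available in the generality stated; everything else is a routine manipulation of the tower formula for differents and the tame-case equality from part (i).
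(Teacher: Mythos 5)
Your proposal is correct. For (i) and (ii) the paper simply cites \cite{IUTchIV}, Proposition 1.3 without proof, whereas you reconstruct the arguments from the tower formula $d = v_p(\mathfrak d_{k/k_0}) + d_0$ together with the standard lower bound $v_k(\mathfrak d_{k/k_0}) \ge e/e_0 - 1$ (equality in the tame case) and the upper bound on the wild different exponent $v_k(\mathfrak d_{k/k_1}) \le e_{k/k_1} - 1 + v_k([k:k_1])$; that reasoning is sound and is presumably what lies behind the cited proposition. For (iii), both arguments deduce the first bullet from (i) in the same way, and for the second bullet the only difference is the choice of intermediate field: the paper takes $k_1$ to be the maximal \emph{unramified} subextension of $k/\Q_p$, so that $[k:k_1] = e$ directly, while you take $k_1$ to be the maximal \emph{tamely ramified} subextension, so that $[k:k_1] = p^{v_p(e)}$ is the $p$-part of $e$; either way $v_p([k:k_1]) = v_p(e)$ and (ii) applies, so the two variants give the identical bound $d \le 1 + v_p(e)$.
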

\begin{proof}
For assertions (i) and (ii), cf, \cite{IUTchIV}, Proposition 1.3.
For assertion (iii), we have $e_0 = 1$, $d_0 = 0$. 
Hence by (i), if $p\nmid e$, then $d=1-\frac{1}{e}$; if $e=1$, then $d=1-\frac{1}{e} = 0$.
Now suppose that $k$ is Galois over $\Q_p$.
Let $k_1$ be the maximal unramified subextension of $k_0$ in $k$, then $k$ is Galois over $k_1$, and we have $[k:k_1] = e$.
Hence by (ii), we have $d \le d_0 + \frac{1}{e_0} + v_p([k:k_1]) = 1 + v_p(e)$.
\end{proof}

\begin{lem} \label{1-lem:Estimates of Logarithms}
We continue to use the notation of Lemma \ref{1-lem:Estimates of Normalization}.
For each $i\in I$, define
\begin{equation*}
 a_i \defeq \frac{1}{e_i}\cdot\left\lceil \frac{e_i+1}{p-1} \right\rceil,
\; b_i \defeq \sup_{n \ge 0}\left\{ n - \frac{p^n}{e_i} \right\},
\end{equation*}
\begin{equation*}
\overrightarrow{a_I} \defeq (a_i)_{i\in I},  \overrightarrow{b_I} \defeq (b_i)_{i\in I}\in\Lambda_I,
\; a_I\defeq \deg(\overrightarrow{a_I}), \; b_I\defeq \deg(\overrightarrow{b_I}).
\end{equation*}

Let $$\phi:\log_p(R_I^\times) \xrightarrow{\sim} \log_p(R_I^\times)$$ 
be an automorphism of [finitely generated] $\Z_p$-module, which can be extended [by tensoring with $\Q_p$ over $\Z_p$] to an automorphism of 
$R' = \Q_p \otimes_{\Z_p} \log_p(R_I^\times).$
Then:

(i) For each $i\in I$, we have 
$$\pi_i^{a_i e_i}\mcO_{k_i} \subseteq \log_p(\mcO_{k_i}^\times) 
\subseteq \pi_i^{  - b_i e_i}\mcO_{k_i}. $$
For each $\overrightarrow{\lambda} \in \Lambda_I$ with degree $\lambda$, we have
$$p^{\lambda + a_I}\cdot R_I \subseteq p^{\overrightarrow{\lambda}}\cdot\log_p(R_I^\times)
\subseteq p^{\lambda - b_I}\cdot R_I .$$

(ii) For each $\overrightarrow{\lambda} \in \Lambda_I$ with degree $\lambda$, we have
$$\phi(p^\lambda \cdot (R_I)^\sim ) \subseteq p^{\lfloor \lambda-d'_I-a_I \rfloor}\log_p(R_I^\times)
\subseteq p^{\lfloor \lambda-d'_I-a_I \rfloor-b_I} \cdot (R_I)^\sim. $$
In particular, if $e_i \le p-2$ for each $i\in I$, then
$$\phi(p^\lambda \cdot (R_I)^\sim ) \subseteq p^{\lambda-d'_I-1} \cdot (R_I)^\sim. $$

(iii)  We shall call 
$\mathcal{I}_{k_i}\defeq \frac{1}{2p}\log_p(\mcO_{k_i}^\times)\subseteq k_i$ 
the log-shell of $k_i$ for $i\in I$, and call 
$$\mathcal{I}_{R'}\defeq \bigotimes_{i\in I}\mathcal{I}_{k_i} 
= p^{-|I|}\cdot 2^{-|I|}\cdot \log_p(R_I^\times) \subseteq R'$$
the log-shell of $R'$, where the tensor product is over $\Z_p$.
Then for each $\overrightarrow{\lambda} \in \Lambda_I$ with degree $\lambda$, we have
$$p^{\overrightarrow{\lambda}}\cdot \mathcal{I}_{R'}
\subseteq p^{\lambda-b_I-v_p(2p)\cdot |I|}(R_I)^\sim.$$
\end{lem}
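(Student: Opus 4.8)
The plan is to reduce everything to the purely non-archimedean local estimates underlying (i): once those are available, (ii) follows by feeding them into Lemma~\ref{1-lem:Estimates of Normalization} together with the $\Z_p$-linearity of $\phi$, and (iii) follows by factoring scalars out of the tensor product. The only genuinely analytic ingredient is the convergence behaviour of the $p$-adic logarithm and exponential.

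For (i) I would first observe that, since $\log_p$ annihilates the prime-to-$p$ torsion subgroup $R_{k_i}\subseteq\mcO_{k_i}^\times$, one has $\log_p(\mcO_{k_i}^\times)=\log_p(1+\pi_i\mcO_{k_i})$. For the upper inclusion, estimate the series term by term: for $x\in\pi_i\mcO_{k_i}$ one has $v_p(x^n/n)\ge n/e_i-v_p(n)\ge\inf_{m\ge 0}\{p^m/e_i-m\}=-b_i$, the infimum being attained at $m=v_p(n)$ since $n\ge p^{v_p(n)}$; hence the series converges and $\log_p(\mcO_{k_i}^\times)\subseteq\pi_i^{-b_ie_i}\mcO_{k_i}$. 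For the lower inclusion, $a_ie_i=\lceil(e_i+1)/(p-1)\rceil$ is precisely the least integer $m$ with $m/e_i>1/(p-1)$, so $\exp_p$ converges on $\pi_i^{a_ie_i}\mcO_{k_i}$ and is there the inverse of $\log_p$; thus $\pi_i^{a_ie_i}\mcO_{k_i}=\log_p(1+\pi_i^{a_ie_i}\mcO_{k_i})\subseteq\log_p(\mcO_{k_i}^\times)$. Multiplying the $i$-th two-sided inclusion by $\pi_i^{\lambda_ie_i}$ and forming the tensor product over $\Z_p$ inside $R'$ — which preserves inclusions of lattices so regarded — yields the global statement, the two outer terms depending only on $\lambda=\deg(\overrightarrow\lambda)$ by the argument of Lemma~\ref{1-lem:Estimates of Normalization}(ii).

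For (ii), by Lemma~\ref{1-lem:Estimates of Normalization}(iii) we have $p^{d'_I}\cdot(R_I)^\sim\subseteq R_I$; combined with $p^{a_I}\cdot R_I\subseteq\log_p(R_I^\times)$ (the case $\overrightarrow\lambda=\overrightarrow{0}$ of (i)) and monotonicity, this places $p^\lambda\cdot(R_I)^\sim$ inside $p^m\cdot\log_p(R_I^\times)$ with $m\defeq\lfloor\lambda-d'_I-a_I\rfloor$; the rounding down is forced precisely because $m$ must be an integer so that $p^m$ is a central $\Z_p$-scalar. Since $\phi$ is $\Z_p$-linear and bijective on the lattice $\log_p(R_I^\times)$, it commutes with $p^m$ and fixes the lattice, so $\phi(p^\lambda\cdot(R_I)^\sim)\subseteq p^m\cdot\log_p(R_I^\times)$; the second inclusion is then the case $\overrightarrow\lambda=\overrightarrow{0}$ of (i) again, $\log_p(R_I^\times)\subseteq p^{-b_I}\cdot R_I\subseteq p^{-b_I}\cdot(R_I)^\sim$, scaled by $p^m$. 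For the last assertion, when $e_i\le p-2$ for every $i$ one reads $a_i=1/e_i$ and $b_i=-1/e_i$ straight from the definitions, so $a_I=-b_I$ and $\lfloor\lambda-d'_I-a_I\rfloor-b_I>\lambda-d'_I-1$, giving the clean bound $p^{\lambda-d'_I-1}\cdot(R_I)^\sim$ by monotonicity.

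For (iii), unwinding the definitions gives $\mathcal{I}_{R'}=\bigotimes_{i\in I}(2p)^{-1}\log_p(\mcO_{k_i}^\times)=(2p)^{-|I|}\log_p(R_I^\times)$, since the scalar $(2p)^{-1}\in\Q_p$ factors out of each $\Z_p$-tensor factor; this is the displayed identity and shows $\mathcal{I}_{R'}\subseteq R'$. For $\overrightarrow\lambda\in\Lambda_I$ of degree $\lambda$, pulling the central scalar to the front and invoking (i), $p^{\overrightarrow\lambda}\cdot\mathcal{I}_{R'}=(2p)^{-|I|}\cdot\big(p^{\overrightarrow\lambda}\cdot\log_p(R_I^\times)\big)\subseteq(2p)^{-|I|}\cdot p^{\lambda-b_I}\cdot R_I$. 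Finally $v_p((2p)^{|I|})=|I|\cdot v_p(2p)$ and $(2p)^{|I|}$ equals $p^{|I|v_p(2p)}$ up to a unit of $\Z_p$, so multiplication by $(2p)^{-|I|}$ shifts the fractional-ideal notation by $-|I|v_p(2p)$; together with $R_I\subseteq(R_I)^\sim$ this gives $p^{\overrightarrow\lambda}\cdot\mathcal{I}_{R'}\subseteq p^{\lambda-b_I-|I|v_p(2p)}\cdot(R_I)^\sim$, as required. The step I expect to be most delicate is the book-keeping in (ii): because $\phi$ is only $\Z_p$-linear and $R_I$ — unlike $(R_I)^\sim$ — is not integrally closed, one cannot apply ``monotonicity by a fractional amount'' directly to $R_I$, and must instead be disciplined about passing through $(R_I)^\sim$ and about rounding to integer powers of $p$ at exactly the right moments; the local estimates of (i) and the scalar manipulations of (iii) are, by comparison, routine, following \cite{IUTchIV}, \S1.
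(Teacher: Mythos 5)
Your argument is correct and follows the same approach as the paper: the proof of (i) via the two-sided estimate $\pi_i^{a_ie_i}\mcO_{k_i}\subseteq\log_p(\mcO_{k_i}^\times)\subseteq\pi_i^{-b_ie_i}\mcO_{k_i}$ (convergence of $\exp_p$ on the left, term-by-term valuation bound on $\log_p$ on the right) matches the paper's computation verbatim, and your derivations of (ii) and (iii) from (i) together with Lemma~\ref{1-lem:Estimates of Normalization}~(iii) and the $\Z_p$-linearity of $\phi$ are precisely what the paper delegates to the citation of \cite{IUTchIV}, Proposition~1.2. The only cosmetic point is a minor variable clash in your upper-bound estimate (using $n$ both as the series index and as the bound variable in $b_i$), but the intended substitution $n\mapsto v_p(m)$, $m\ge p^{v_p(m)}$ is clear and correct.
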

\begin{proof}
First, to prove assertion (i), we only need to prove $$\pi_i^{a_i e_i}\mcO_{k_i} \subseteq \log_p(\mcO_{k_i}^\times) 
\subseteq \pi_i^{  - b_i e_i}\mcO_{k_i}$$ for each $i\in I$.
On one hand, for each $x\in \pi^{a_i e_i}\mcO_{k_i}$, since $a_i = \frac{1}{e_i}\cdot\left\lceil \frac{e_i+1}{p-1} \right\rceil 
> \frac{1}{e_i}\cdot \frac{e_i}{p-1} =\frac{1}{p-1}$, 
we have $v_p(x)>\frac{1}{p-1}$ and $x = \log_p(\exp_p(x)) \in \log_p(\mcO_{k_i}^\times)$. 
Hence $\pi_i^{ a_i e_i}\mcO_{k_i} \subseteq \log_p(\mcO_{k_i}^\times)$.
On the other hand, for each $x \in \pi_i\mcO_{k_i}$, we have
\begin{align*}
    v_p(\log_p(1+x)) & =v_p(\sum_{m=1}^\infty (-1)^{m+1}\frac{x^m}{m}) 
    \ge \inf_{m\ge1}\{v_p(\frac{x^m}{m})\} 
     \\ & = - \sup_{m\ge 1}\{v_p(m) - m\cdot v_p(x) \}
     \ge  - \sup_{n=v_p(m) \ge 0}\left\{ n - \frac{p^n}{e_i} \right\}
     = -b_i.
\end{align*}
Hence $\log_p(1+x) \in \pi_i^{-b_i e_i}\mcO_{k_i}$. Then by the definition of $p$-adic logarithm,
we have $\log_p(\mcO_{k_i}^\times) = \log_p(1+\pi_i\mcO_{k_i}) 
\subseteq \pi_i^{-b_i e_i}\mcO_{k_i}.$ 
This proves assertion (i).

Assertions (ii) and (iii) can be proved similarly to [the proof of] \cite{IUTchIV}, Proposition 1.2.
\end{proof}

\begin{lem} \label{1-lem:Estimation for p-adic log-volume}
We continue to use the notation of Lemma \ref{1-lem:Estimates of Logarithms}.

(i) Recall that $R'$ has a direct sum decomposition $R' = \bigoplus_{i=1}^{n_{R'}}L_i$, where $L_i$ $(1\le i\le n_{R'})$ are $p$-adic local fields. 
The log-volume functions on $L_i$ [cf. \cite{AbsTopIII}, Proposition 5.7 (i)] determines a normalized log-volume function $$\mu^{\log}(-)$$ defined on compact open subsets of $R'$, valued in $\R$, which is normalized so that 
$$\mu^{\log}((R_I)^\sim) = 0, \quad \mu^{\log}(p(R_I)^\sim) = -\log(p).$$

(ii) For a subset $A\subseteq R'$, the holomorphic hull of $A$ in $R'$ is the smallest subset of the form $\bigoplus_{i=1}^{n_{R'}}x_i\mcO_{L_i} \subseteq R'$ which contains $A$, 
where $x_i$ is a non-zero element of $L_i$ for each $1\le i\le n_{R'}$.

Suppose that $\overrightarrow{\lambda} \in \Lambda_I$ with degree $\lambda$. Let $U$ be holomorphic hull of the union of $p^{\overrightarrow{\lambda}}\cdot \mathcal{I}_{R'}$ and $\phi(p^{\overrightarrow{\lambda}}\cdot (R_I)^\sim )$,
where $\phi$ runs through all the automorphisms of the $\Q_p$-vector space $R'$ which is extended by an automorphism of the $\Z_p$-module $\log_p(R_I^\times)$ (i.e. all the possible $\phi$ in Lemma \ref{1-lem:Estimates of Logarithms}). 
Then we have
\begin{equation} \label{1.4-eq1}
    \mu^{\log}(U) \le (\max\{ \lceil -\lambda+d'_I+a_I\rceil, -\lambda+v_p(2p)\cdot |I| \} + b_I )\cdot \log(p).
\end{equation}
\end{lem}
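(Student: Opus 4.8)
The plan is to combine the three containments already established in Lemmas \ref{1-lem:Estimates of Normalization}(iii), \ref{1-lem:Estimates of Logarithms}, and the definition of the normalized log-volume in part (i) of the present lemma. The set $U$ is the holomorphic hull of a union of two pieces: the ``archimedean-type'' piece $p^{\overrightarrow{\lambda}}\cdot\mathcal{I}_{R'}$ coming from the log-shell, and the ``$\Theta$-type'' piece $\phi(p^{\overrightarrow{\lambda}}\cdot(R_I)^\sim)$ ranging over all admissible automorphisms $\phi$. Since $\mu^{\log}$ is monotone with respect to inclusion of compact open subsets and additive under the direct sum decomposition $R'=\bigoplus L_i$, and since the holomorphic hull of a union is contained in the (coordinatewise) sum of the holomorphic hulls, it suffices to bound each of the two pieces by an explicit fractional ideal of $(R_I)^\sim$ and then take the worse of the two bounds.

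First I would handle the log-shell piece. By Lemma \ref{1-lem:Estimates of Logarithms}(iii), $p^{\overrightarrow{\lambda}}\cdot\mathcal{I}_{R'}\subseteq p^{\lambda-b_I-v_p(2p)\cdot|I|}(R_I)^\sim$. Its holomorphic hull is therefore contained in $p^{\lceil \lambda-b_I-v_p(2p)\cdot|I|\rceil}(R_I)^\sim$ — or, more simply, since $b_I$ and $v_p(2p)|I|$ are the quantities we want to keep exact, in $p^{-(-\lambda+v_p(2p)\cdot|I|)-b_I}(R_I)^\sim$, whose log-volume is $(-\lambda+v_p(2p)\cdot|I|+b_I)\log(p)$ by the normalization $\mu^{\log}(p(R_I)^\sim)=-\log(p)$. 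Next, for the $\Theta$-type piece, Lemma \ref{1-lem:Estimates of Logarithms}(ii) gives, uniformly in $\phi$, the containment $\phi(p^\lambda\cdot(R_I)^\sim)\subseteq p^{\lfloor\lambda-d'_I-a_I\rfloor-b_I}\cdot(R_I)^\sim$; since $\overrightarrow{\lambda}$ has degree $\lambda$, the same bound applies to $\phi(p^{\overrightarrow{\lambda}}\cdot(R_I)^\sim)$. Writing $\lfloor\lambda-d'_I-a_I\rfloor=-\lceil-\lambda+d'_I+a_I\rceil$, this piece and its holomorphic hull lie in $p^{-\lceil-\lambda+d'_I+a_I\rceil-b_I}(R_I)^\sim$, of log-volume $(\lceil-\lambda+d'_I+a_I\rceil+b_I)\log(p)$.

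Finally I would assemble these. The holomorphic hull $U$ of the union is contained in the holomorphic hull of the sum of the two ideals above; since both are of the form $p^{m}(R_I)^\sim$ with $m\in\Lambda$, their sum is $p^{\min}(R_I)^\sim$ with the minimum of the two exponents, i.e. the exponent $-\bigl(\max\{\lceil-\lambda+d'_I+a_I\rceil,\,-\lambda+v_p(2p)\cdot|I|\}+b_I\bigr)$. Applying monotonicity of $\mu^{\log}$ and the normalization once more yields the claimed inequality \eqref{1.4-eq1}. The only genuinely delicate point is bookkeeping: one must be careful that the ceiling in \eqref{1.4-eq1} is applied only to the term $-\lambda+d'_I+a_I$ (matching the floor in Lemma \ref{1-lem:Estimates of Logarithms}(ii)) and not to the log-shell term, where $v_p(2p)\cdot|I|$ need not be integral but is already an element of $\Lambda$ absorbed correctly by the monotonicity statement in Lemma \ref{1-lem:Estimates of Normalization}(ii); and that $b_I$ is pulled out as a common additive constant rather than being rounded. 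I expect the integrality juggling between $\Lambda_I$, $\Lambda$, and $\Z$ — together with checking that the holomorphic hull really does behave coordinatewise under the direct sum decomposition — to be the main obstacle, though each of these is routine once set up.
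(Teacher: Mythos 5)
Your proposal is correct and follows essentially the same route as the paper: bound the log-shell piece via Lemma \ref{1-lem:Estimates of Logarithms}(iii), bound the $\phi$-piece uniformly via Lemma \ref{1-lem:Estimates of Logarithms}(ii), observe that both are contained in $p^{-u}(R_I)^\sim$ where $u$ is the displayed maximum plus $b_I$, and conclude by the minimality of the holomorphic hull together with the normalization of $\mu^{\log}$. The only minor inefficiency is your worry about the holomorphic hull ``behaving coordinatewise'' and your detour through the sum of two ideals: since $p^{-u}(R_I)^\sim$ is \emph{already} of the required form $\bigoplus x_i\mcO_{L_i}$ and contains the union, the hull $U$ is contained in it by minimality, so no coordinatewise decomposition of hulls is needed.
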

\begin{proof}
For assertion (i), cf. \cite{IUTchIV}, Proposition 1.4. Next, we consider assertion (ii).
Write $$u \defeq \max\{ \lceil -\lambda+d'_I+a_I\rceil, -\lambda+v_p(2p)\cdot |I| \} + b_I. $$
By Lemma \ref{1-lem:Estimates of Logarithms} (ii), for each possible automorphism $\phi$, we have 
$$\phi(p^\lambda (R_I)^\sim ) \subseteq p^{-(\lceil -\lambda+d'_I+a_I\rceil+b_I)}(R_I)^\sim 
\subseteq p^{-u}\cdot (R_I)^\sim. $$
And by Lemma \ref{1-lem:Estimates of Logarithms} (iii), we have
$$p^{\overrightarrow{\lambda}}\cdot \mathcal{I}_{R'} \subseteq p^{-(-\lambda+b_I+v_p(2p)\cdot |I|)}(R_I)^\sim
\subseteq p^{-u}\cdot (R_I)^\sim. $$
Since $p^{-u}\cdot (R_I)^\sim$ is of the form $\bigoplus_{i=1}^{n_{R'}}x_i\mcO_{L_i}$ 
[with each $x_i\in L_i$ satisfying $v_p(x_i) = -u$], $U$ must be a subset of $p^{-u}\cdot (R_I)^\sim$ [by the definition of holomorphic hull]. Hence $$ \mu^{\log}(U) \le \mu^{\log}(p^{-u}\cdot (R_I)^\sim) = u\cdot \log(p). $$
This proves assertion (ii).
\end{proof}

We also have a archimedean version of Lemma \ref{1-lem:Estimation for p-adic log-volume}.

\begin{lem} \label{1-lem:Estimation for complex log-volume}
Let $\{k_i\}_{i\in I}$ be a finite set of complex archimedean local fields, where $|I|\ge 2$. For $i\in I$, write $\mcO_{k_i}$ for the ring of integers of $k_i$. Then:

(i) Write $R' \defeq \bigotimes_{i\in I}k_i$, where the tensor product is over $\R$.    
Then we have a naural direct sum decomposition 
\begin{align*}
    R' = \bigoplus_{i=1}^{n_{R'}}\C_i,
\end{align*} 
where $n_{R'}=2^{|I|-1}$, and $\C_i$ is a complex archimedean local field for each $1\le i\le n_{R'}$. 
Write $B_I \defeq  \bigoplus_{i=1}^{n_{R'}}\mcO_{\C_i},$ where $\mcO_{\C_i}$ is the unit ball of $\C_i$.

(ii) The radial log-volume functions on $\C_i$ [cf. \cite{AbsTopIII}, Proposition 5.7 (ii)] determines a normalized log-volume function $$\mu^{\log}(-)$$ defined on compact subsets of $R'$, valued in $\R$, which is normalized so that 
$$\mu^{\log}(B_I) = 0,\quad \mu^{\log}(e\cdot B_I) = 1.$$

(iii) We shall call $I_{k_i}\defeq \pi\cdot\mcO_{k_i}\subseteq k_i$ the log-shell of $k_i$ for $i\in I$. 
We shall write $I_{R'}$ for the image of $\prod_{i\in I}I_{k_i}$ in $R'$
under the natural homomorphism $\prod_{i\in I}k_i \to \bigotimes_{i\in I}k_i = R'$, and call $I_{R'}$ the log-shell of $R'$.

For a subset $A\subseteq R'$, the holomorphic hull of $A$ in $R'$ is the smallest subset of the form $\bigoplus_{i=1}^{n_{R'}}x_i\mcO_{\C_i} \subseteq R'$ which contains $A$, 
where $x_i$ is a non-zero element of $\C_i$ for each $1\le i\le n_{R'}$.

Let $U$ be holomorphic hull of the union of $\mathcal{I}_{R'}$ and $B_I$,
Then we have $$\mu^{\log}(U) \le |I|\cdot \log(\pi).$$
\end{lem}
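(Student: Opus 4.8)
The plan is to exhibit one explicit set of the form $\bigoplus_{j=1}^{n_{R'}} x_j\mcO_{\C_j}$ that contains $\mathcal{I}_{R'}\cup B_I$ and to estimate its normalized log-volume; since the holomorphic hull $U$ is by definition the smallest such set, this will bound $\mu^{\log}(U)$ from above. The natural candidate, mirroring the $p$-adic argument in Lemma \ref{1-lem:Estimation for p-adic log-volume}(ii), is $\pi^{|I|}\cdot B_I=\bigoplus_{j=1}^{n_{R'}}\pi^{|I|}\mcO_{\C_j}$.

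First I would record the scaling behaviour of the normalized radial log-volume. Because $\mu^{\log}(c\cdot B_I)$ is affine in $\log c$ — it is the logarithm of a volume, so scaling all coordinates by $c$ changes it by a fixed multiple of $\log c$ — and it is pinned by the two normalization conditions $\mu^{\log}(B_I)=0$ and $\mu^{\log}(e\cdot B_I)=1$, one gets $\mu^{\log}(c\cdot B_I)=\log c$ for every real $c>0$, and in particular $\mu^{\log}(\pi^{|I|}\cdot B_I)=|I|\log\pi$. This is the archimedean counterpart of the normalization recorded in Lemma \ref{1-lem:Estimation for p-adic log-volume}(i), and can be extracted from \cite{AbsTopIII}, Proposition 5.7(ii).

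Next I would verify the two containments $B_I\subseteq\pi^{|I|}\cdot B_I$ and $\mathcal{I}_{R'}\subseteq\pi^{|I|}\cdot B_I$. The first is immediate since $\pi^{|I|}>1$. For the second, observe that every element of $\mathcal{I}_{R'}$ is the image of a pure tensor $t_1\otimes\cdots\otimes t_{|I|}$ with $t_i\in I_{k_i}=\pi\cdot\mcO_{k_i}$, hence of the form $\pi^{|I|}\cdot(z_1\otimes\cdots\otimes z_{|I|})$ with each $z_i\in\mcO_{k_i}$. Under the direct sum decomposition $R'=\bigoplus_{j=1}^{n_{R'}}\C_j$ of assertion (i) — concretely obtained by iterating the isomorphism $\C\otimes_\R\C\xrightarrow{\sim}\C\oplus\C$, $z\otimes w\mapsto(zw,\,z\bar w)$ — each component of $z_1\otimes\cdots\otimes z_{|I|}$ equals, up to complex conjugation of some of the factors, the product $z_1\cdots z_{|I|}$, which has modulus $\prod_{i\in I}|z_i|\le 1$ and therefore lies in $\mcO_{\C_j}$. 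Thus $z_1\otimes\cdots\otimes z_{|I|}\in B_I$, so $\mathcal{I}_{R'}\subseteq\pi^{|I|}\cdot B_I$. Combining, $\mathcal{I}_{R'}\cup B_I\subseteq\pi^{|I|}\cdot B_I$, which is of the prescribed shape, so $U\subseteq\pi^{|I|}\cdot B_I$ and monotonicity of $\mu^{\log}$ gives $\mu^{\log}(U)\le\mu^{\log}(\pi^{|I|}\cdot B_I)=|I|\log\pi$.

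The only genuinely delicate point is the containment $\mathcal{I}_{R'}\subseteq\pi^{|I|}\cdot B_I$: one must pin down the explicit form of the isomorphism $R'\cong\bigoplus_{j}\C_j$ precisely enough to see that a pure tensor of unit-ball elements lands in $B_I$, i.e. that the maximum-modulus norm of such a pure tensor across the decomposition is submultiplicative in the factors $|z_i|$. Everything else is bookkeeping with the normalization of $\mu^{\log}$, so once that structural fact is in hand the estimate follows exactly as in the $p$-adic case treated above.
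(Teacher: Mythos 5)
Your proposal is correct and follows exactly the route the paper indicates for assertion (iii): the paper's proof is the one-line remark that ``assertion (iii) follows from the fact that $I_{R'}, B_I \subseteq \pi^{|I|}\cdot B_I$ and $\mu^{\log}(\pi^{|I|}\cdot B_I) = |I|\cdot\log(\pi)$, which can be verified by means of routine and elementary arguments.'' You have simply carried out those routine arguments --- the affine-in-$\log c$ normalization giving $\mu^{\log}(c\cdot B_I)=\log c$, the trivial inclusion $B_I\subseteq\pi^{|I|}B_I$, and the iterated $\C\otimes_\R\C\cong\C\oplus\C$ decomposition showing pure tensors of unit-ball elements land in $B_I$ --- so the approach is the same, just fully written out.
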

\begin{proof}
For assertion (i), cf. \cite{IUTchIV}, Proposition 1.5, where we can take $|V|=1$.
Assertion (ii) can be proved by definition. 
Assertion (iii) follows from the fact that $I_{R'}, B_I \subseteq \pi^{|I|}\cdot B_I$ and $\mu^{\log}(\pi^{|I|}\cdot B_I) = |I|\cdot \log(\pi)$, which can be verified by means of routine and elementary arguments.
\end{proof}

\subsection{Estimation for Log-volumes of Initial \texorpdfstring{$\Theta$}{}-Data }
In this subsection, we will estimate the log-volumes associated to $\mu_6$-initial $\Theta$-data [cf. \cite{IUTchI}, Definition 3.1 and \cite{ExpEst}, Definition 4.1]. 
We will maintain the notation throughout this subsection, and introduce additional notation as necessary to support our analysis.

The following lemma is helpful for constructing $\mu_6$-initial $\Theta$-data.
\begin{lem} \label{1-lem: initial Theta-data}
Suppose that the following conditions are satisfied:
\begin{itemize}
    \item[(a)] $F$ is a number field such that $\sqrt{-1}\in F$; $\overline{F}$ is an algebraic closure of $F$. 
    Write $G_F \defeq \Gal(\overline{F}/F)$ for the absolute Galois group of $F$.
    \item[(b)] $X_F$ is a punctured elliptic curve over $F$. Write $E_F$ for the elliptic curve over $F$ determined by $X_F$, then $E_F$ is semi-stable and $F = F(E_F[6])$.
    \item[(c)] $F_{\tmod} \subseteq F$ is the field of moduli [cf., e.g., \cite{AbsTopIII}, Definition 5.1, (ii)] of $X_F$. 
    Write $$\V_{\tmod} \defeq \V(F_{\tmod}), \quad \V_{\tmod}^{\non} \defeq \V(F_{\tmod})^{\non}. $$
    Then $$\V_{\tmod}^{\bad} \subseteq \V_{\tmod}^{\non} $$ is a nonempty set of nonarchimedean valuations of $F_{\tmod}$, such that $E_F$ has bad multiplicative reduction at each $v\in \V(F)$ lying over $\V_{\tmod}^{\bad}$. 
    Write $$\V(F)^{\bad}\defeq \V_{\tmod}^{\bad}\times_{\V_{\tmod}} \V(F).$$
    \item[(d)] $l\ge 5$ is a prime number, such that the image of the Galois representation $$\rho_{E_F,l}: G_F\to \GL(2,\F_l)$$ determined by the $l$-torsion points of $E_F$ contains the subgroup $\SL(2,\F_l) \subseteq \GL(2,\F_l)$. 
    Write $K\defeq F(E_F[l])\subseteq \overline{F}$ for the $l$-torsion point field of $E_F$, then $K$ is a finite Galois extension of $F$.
     Write $X_K \defeq X_F\times_F K$, then $X_K$ admits a $K$-core.
    \item[(e)] The field extension $F / F_{\tmod}$ is Galois of degree prime to $l$. 
    Also, for each $v\in \V(F)^{\bad}$, $l$ is prime to the residue characteristic of $v$, as well as to the order of the $q$-parameter of $E_F$ at $v$.
\end{itemize}
Then there exists a section $$\eta : \V_{\tmod} \xrightarrow{\sim} \uV \subseteq \V(K)$$ of the natural surjection $\V(K) \twoheadrightarrow \V_{\tmod}$,
a hyperbolic orbicurve $\underline{C}_K$ of type $(1,l\text{-tors})_{\pm}$ [cf. \cite{ExpEst}, Definition 3.4, (i)] over $K$ and a cusp  $\underline{\epsilon}$ of $\underline{C}_K$,
such that $$(\overline{F}/F,X_F,l,\underline{C}_K, \uV,\V_{\tmod}^{\bad}, \underline{\epsilon})$$ is a $\mu_6$-initial $\Theta$-data.
\end{lem}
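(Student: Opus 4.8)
The plan is to verify, clause by clause, that for a suitable choice of the data $\eta$, $\underline{C}_K$ and $\underline{\epsilon}$ the tuple $(\overline{F}/F,X_F,l,\underline{C}_K,\uV,\V_{\tmod}^{\bad},\underline{\epsilon})$ satisfies every defining condition of a $\mu_6$-initial $\Theta$-data [cf. \cite{IUTchI}, Definition 3.1 and \cite{ExpEst}, Definition 4.1]. The bulk of the argument is a direct translation of hypotheses (a)--(e) into the defining clauses; the only parts requiring genuine external input are the construction of the orbicurve $\underline{C}_K$ (together with the cusp $\underline{\epsilon}$) and the compatibility of the section $\uV$ with the local conditions at the places of bad reduction.

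First I would dispatch the ``global'' number-field and reduction-type clauses using (a)--(c) and the first half of (e). Hypothesis (a) supplies the number field $F$ with $\sqrt{-1}\in F$ and its absolute Galois group $G_F$; hypothesis (b) supplies the punctured elliptic curve $X_F$, the associated elliptic curve $E_F$, the semistability of $E_F$, and the equality $F=F(E_F[6])$ --- the last of which is precisely the ``$\mu_6$'' input guaranteeing rationality of the $6$-torsion, hence the availability of the cyclotomic and Kummer-theoretic rigidity structures underlying the modified theory; hypothesis (c) supplies the field of moduli $F_{\tmod}\subseteq F$, the nonempty set $\V_{\tmod}^{\bad}\subseteq\V_{\tmod}^{\non}$, and --- invoking semistability again --- the fact that $E_F$ has bad \emph{multiplicative} reduction at each place of $\V(F)^{\bad}$. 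Together with the requirement from (e) that $F/F_{\tmod}$ be Galois of degree prime to $l$, these account for all the number-field, field-of-moduli and reduction-type requirements.

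Next I would treat the clauses concerning $l$, the Galois representation, and the orbicurve, using (d) and the second half of (e). Hypothesis (d) gives $l\ge 5$, the containment of $\SL(2,\F_l)$ in the image of $\rho_{E_F,l}$, the $l$-torsion field $K=F(E_F[l])$ and the existence of a $K$-core for $X_K$; together with the coprimality of $l$ to the residue characteristics and to the orders of the $q$-parameters at the bad places (from (e)), this transcribes directly into the corresponding clauses. From the $K$-core of $X_K$ and the $\SL(2,\F_l)$-condition, the general theory of hyperbolic orbicurves of type $(1,l\text{-tors})_{\pm}$ [cf. \cite{ExpEst}, Definition 3.4, (i) and the references therein] then produces over $K$ a hyperbolic orbicurve $\underline{C}_K$ of that type together with an admissible cusp $\underline{\epsilon}$; here I would check that the input hypotheses of that construction are met.

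Finally, for the section $\eta:\V_{\tmod}\xrightarrow{\sim}\uV\subseteq\V(K)$: since the natural surjection $\V(K)\twoheadrightarrow\V_{\tmod}$ admits a set-theoretic section, the point is to choose one whose image $\uV$ satisfies the local requirements of the definition at the places lying over $\V_{\tmod}^{\bad}$ (and at the archimedean and ``good'' nonarchimedean places). Here semistability furnishes the Tate-curve description of $E_F$ at the bad places, and the coprimality conditions of (e) ensure that the local structure is of the prime-to-$l$ $q$-parameter shape demanded by the axioms. The main obstacle is exactly this last step together with the orbicurve construction: one must check that $\underline{C}_K$, $\underline{\epsilon}$ and $\eta$ can be chosen \emph{simultaneously} so that all the delicate local clauses at the bad primes hold, and that the points where the $\mu_6$-definition of \cite{ExpEst} weakens or modifies the clauses of \cite{IUTchI}, Definition 3.1 are matched correctly. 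I expect everything else to be routine bookkeeping, closely parallel to the constructions already carried out in \cite{ExpEst} and in \cite{IUTchI}.
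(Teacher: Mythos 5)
Your proposal is correct and takes essentially the same route as the paper: both proceed by matching hypotheses (a)--(e) against the defining clauses of \cite{IUTchI}, Definition 3.1 and \cite{ExpEst}, Definition 4.1. The paper's one-line proof singles out only the existence of the section $\eta$, which it attributes specifically to the $\SL(2,\F_l)$-condition; you touch on this compatibility point in your final paragraph but could state more directly that it is precisely the $\SL(2,\F_l)$-containment (not merely the availability of a set-theoretic section plus local checks) which guarantees $\eta$ can be chosen compatibly with $\underline{C}_K$ and $\underline{\epsilon}$.
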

\begin{proof}
The lemma is a consequence of \cite{IUTchI}, Definition 3.1 and \cite{ExpEst}, Definition 4.1, where the existence of the section $\eta : \V_{\tmod} \xrightarrow{\sim} \uV$ follows from the condition that the image of $\rho_{E_F,l}$ contains the subgroup $\SL(2,\F_l) \subseteq \GL(2,\F_l)$. 
\end{proof}

\begin{definition} \label{1-def:Log of arithmetic divisors}
Suppose that $(\overline{F}/F,X_F,l,\underline{C}_K, \uV,\V_{\tmod}^{\bad}, \underline{\epsilon})$ is a $\mu_6$-initial $\Theta$-data.
We shall proceed with the notation of Lemma \ref{1-lem: initial Theta-data},

(i) For each $v\in\V_{\tmod}^{\non}$, we shall write $\uv\defeq \eta(v)\in \V(K)$,
and write $p_v$ for the residue characteristic of  $v$,
$e_v$ for the ramification index of $K_{\uv}$ over $\Q_p$,
$d_v \in \frac{1}{e_v}\Z$ for the different index [cf. \S0] of $K_{\uv}$.  

When $v\in \V_{\tmod}^{\bad}$, write $q_v$ for the $q$-parameter of the Tate curve $E_{K_{\uv}} \defeq E_F \times_F K_{\uv}$, then by the fact that $2 l$-torsion points of $E_F$ are defined over $K$ and Proposition \ref{2-prop: torsion point field} (iv),
we have
\begin{equation} \label{1.8-eq: 2l divides e times v_p}
2l \mid \uv(q_v) = e_v\cdot v_{p_v}(q_v).
\end{equation}
When $v\notin \V_{\tmod}^{\bad}$, write $q_v\defeq 1 $, then $ v_{p_v}(q_v) = 0$. Hence (\ref{1.8-eq: 2l divides e times v_p}) is valid for every $v\in\V_{\tmod}^{\non}$.

We shall also write $$ a_v \defeq \frac{1}{e_v}\cdot\left\lceil \frac{e_v+1}{p_v-1} \right\rceil,
\quad b_v \defeq \sup_{n \ge 0}\left\{ n - \frac{p_v^n}{e_v} \right\} .$$
Then we have $a_v = -b_v = \frac{1}{e_v}$ when $e_v\le p_v-2$.

(ii) For each $v_p\in \V_{\Q}^{\non}$, we shall put
{\small
\begin{equation*} 
\log(\mathfrak{q}_{v_p}) 
\defeq \frac{1}{[F_{\tmod}:\Q]}\sum\limits_{v\in (\V_{\tmod})_{v_p}}\deg(v(q_v)\cdot v) \in \R.
\end{equation*}
}

We shall also write 
{\small\begin{equation*} 
\mathfrak{q} \defeq \sum_{v\in \V_{\tmod}^{\non}} v(q_v) \cdot v \in \ADiv_{\R}(\V_{\tmod}), 
\quad \log(\mathfrak{q})\defeq \udeg_{F_{\tmod}}(\mathfrak{q})  \in \R.
\end{equation*}
}
Then 
\begin{equation} \label{1.8-eq: log(q)}
\log(\mathfrak{q})
= \frac{1}{[F_{\tmod}:\Q]}\sum\limits_{v\in \V_{\tmod}^{\non}}\deg(v(q_v)\cdot v)
=  \sum_{v_p\in \V_{\Q}^{\non}} \log(\mathfrak{q}_{v_p}) .
\end{equation}

\end{definition}

\begin{definition} \label{1-def:Log-volume of initial Theta-data}
We shall proceed with the above notation.
Suppose that $j\in\{1,2,\dots,l^\divideontimes\!=\!\frac{l-1}{2}\}$, $v_{\Q}\in \V_{\Q}$.

(i) Let $v_0,v_1,\dots,v_j\in (\V_{\tmod})_{v_{\Q}}$ be $j+1$ valuations. For $0\le i\le j$, write $\uv_i \defeq \eta(v_j)\in \uV$ and $k_i \defeq K_{\uv_i}$.
We shall use $-|\log(\Theta)|_{(v_0,v_1,\dots,v_j)}$ to denote the log-volume of the following:
\begin{itemize}
    \item For $v_{\Q} \in \V_{\Q}^{\non}$, the holomorphic hull described in Lemma \ref{1-lem:Estimation for p-adic log-volume}, 
    where by (\ref{1.8-eq: 2l divides e times v_p}) we can write $\overrightarrow{\lambda} \defeq \frac{j^2}{2l}\cdot \uv_j(q_{v_j}) \in \frac{1}{e_{v_j}} \Z \hookrightarrow \oplus_{i=0}^j \frac{1}{e_{v_i}} \Z .$

    \item For $v_{\Q} \in \V_{\Q}^{\arc}$, the holomorphic hull described in Lemma \ref{1-lem:Estimation for complex log-volume}. In this case, since $\sqrt{-1}\in F\subseteq K$, all the $k_i$ are complex archimedean local fields, which satifies the premise of Lemma \ref{1-lem:Estimation for complex log-volume}.
\end{itemize}

(ii) The weighted average log-volume $-|\log(\uu\Theta)|_{v_{\Q},j}$ is defined by the formula
\begin{equation} \label{1.9-eq1}
-|\log(\uu\Theta)|_{v_{\Q},j} \defeq 
\frac{ \sum\limits_{v_0,\dots,v_j\in (\V_{\tmod})_{v_{\Q}}} \big(\prod\limits_{0\le i\le j}[(F_{\tmod})_{v_i}:\Q_{v_{\Q}}]\big)\cdot \big(-|\log(\Theta)|_{(v_0,v_1,\dots,v_j)}\big)}
{\sum\limits_{v_0,\dots,v_j\in (\V_{\tmod})_{v_{\Q}}} \big(\prod\limits_{0\le i\le j}[(F_{\tmod})_{v_i}:\Q_{v_{\Q}}]\big)}, 
\end{equation}
and the procession-normalized log-volume $-|\log(\uu\Theta)|_{v_{\Q}}$ is defined by the formula
\begin{equation} \label{1.9-eq2}
-|\log(\uu\Theta)|_{v_{\Q}} \defeq \frac{1}{l^\divideontimes}\sum_{1\le j\le l^\divideontimes} (-|\log(\uu\Theta)|_{v_{\Q},j}).
\end{equation}

(iii) The log-volume of $\mu_6$-initial $\Theta$-data $-|\log(\uu\Theta)|$ is defined by the formula
\begin{equation} \label{1.9-eq3}
-|\log(\uu\Theta)| \defeq - \sum_{v_{\Q}\in\V_{\Q}} |\log(\uu\Theta)|_{v_{\Q}}.
\end{equation}

\end{definition}

\begin{prop} \label{1-prop: IUTT main result}
Proceeding with the above notation, we have  
\begin{equation} \label{1.10-eq1}
-\frac{1}{
    2l}\cdot\log(\mathfrak{q}) \le -|\log(\uu\Theta)|. 
\end{equation}
\end{prop}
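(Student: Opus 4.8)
The plan is to deduce inequality (\ref{1.10-eq1}) from the central theorem of inter-universal Teichmüller theory — namely the $\mu_6$-version of \cite{IUTchIII}, Corollary 3.12 (as reformulated in \cite{ExpEst}) — by identifying the two sides of that theorem with the quantities defined in Definitions \ref{1-def:Log of arithmetic divisors} and \ref{1-def:Log-volume of initial Theta-data}. The left-hand side $-\frac{1}{2l}\log(\mathfrak{q})$ is, up to the normalization chosen here, the $\Theta$-pilot-to-$q$-pilot comparison: the $q$-pilot object has log-volume governed by $\udeg_{F_{\tmod}}(\mathfrak{q})$, and the factor $\frac{1}{2l}$ records that the relevant theta values are $2l$-th roots extracted from $q_v$ (this is exactly where (\ref{1.8-eq: 2l divides e times v_p}), i.e. $2l \mid \uv(q_v)$, is used — it guarantees $\overrightarrow{\lambda} = \frac{j^2}{2l}\uv_j(q_{v_j})$ lands in $\frac{1}{e_{v_j}}\Z$, so the local constructions in Definition \ref{1-def:Log-volume of initial Theta-data}(i) make sense). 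The right-hand side $-|\log(\uu\Theta)|$ is by construction the procession-normalized log-volume of the union of theta-pilot holomorphic hulls aggregated over $v_{\Q}$, $j$, and the tuples $(v_0,\dots,v_j)$.

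First I would recall the precise statement of the multiradial algorithm / indeterminacy estimate from IUT: the $\Theta$-link identifies the $q$-pilot object on one side with the $\Theta$-pilot object on the other, and the three indeterminacies (Ind1, Ind2, Ind3) force that the $\Theta$-pilot, when measured by the log-volume $\mu^{\log}$ on the log-shells, contains (the image under the relevant automorphisms of) the $q$-pilot. Concretely, this is the inclusion that, after taking holomorphic hulls and passing to log-volumes, yields $-\frac{1}{2l}\log(\mathfrak{q}) \le -|\log(\uu\Theta)|$ provided the bookkeeping of normalizations matches. So the body of the proof is a translation exercise: for each $v_{\Q}\in\V_{\Q}$ and each $j$, the local quantity $-|\log(\Theta)|_{(v_0,\dots,v_j)}$ of Definition \ref{1-def:Log-volume of initial Theta-data}(i) is exactly the log-volume of the holomorphic hull appearing on the $\Theta$-side of the indeterminacy estimate at those places, and the weighting in (\ref{1.9-eq1}) together with the averaging in (\ref{1.9-eq2}) and the sum in (\ref{1.9-eq3}) reproduce precisely the procession-normalized, $[F_{\tmod}:\Q]$-normalized global log-volume on the $\Theta$-side. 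Meanwhile the $q$-side contributes $\frac{1}{[F_{\tmod}:\Q]}\sum_{v}\deg(\frac{j^2}{2l}\cdot\text{(stuff)})$ which, after averaging $\frac{1}{l^\divideontimes}\sum_{j=1}^{l^\divideontimes} j^2 = \frac{1}{l^\divideontimes}\cdot\frac{l^\divideontimes(l^\divideontimes+1)(2l^\divideontimes+1)}{6}$ and dividing appropriately, collapses to the factor $\frac{1}{2l}$ against $\log(\mathfrak{q})$; here I would check the combinatorial identity $\frac{1}{l^\divideontimes}\sum_{j=1}^{l^\divideontimes} j^2 \approx \frac{l^2}{12}$ and the normalization so the $q$-pilot coefficient comes out to $\frac{1}{2l}$ rather than some other multiple, which amounts to matching conventions with \cite{IUTchIV}, Theorem 1.10 and \cite{ExpEst}, Theorem 5.1.

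The main obstacle is not any single estimate but the faithful transcription of the IUT comparison theorem into the present normalization: one must verify that the log-shell $\mathcal{I}_{R'}$ used in Definition \ref{1-def:Log-volume of initial Theta-data} agrees with the log-shell appearing in the multiradial algorithm, that the automorphisms $\phi$ over which the holomorphic hull is taken in Lemma \ref{1-lem:Estimation for p-adic log-volume}(ii) are precisely (a superset of) the indeterminacies Ind1/Ind2 acting on the unit group, and that the archimedean contribution via Lemma \ref{1-lem:Estimation for complex log-volume} matches the archimedean part of the IUT estimate. Since the present setup is explicitly a ``slight modification'' of \cite{IUTchIV} and \cite{ExpEst}, I expect the proof to consist of citing \cite{IUTchIII}, Corollary 3.12 (in its $\mu_6$-form from \cite{ExpEst}) for the underlying inclusion, then invoking the local estimates of \S1.1 place-by-place to convert that inclusion into the stated inequality, with the combinatorial/normalization check above being the only genuinely new computation. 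I would flag that the inequality (\ref{1.10-eq1}) here has the clean constant $\frac{1}{2l}$ precisely because we are over $\Q$ with $F = F(E_F[6])$ forcing favorable ramification, so the ``error terms'' $d'_I, a_I, b_I$ from the local lemmas are absorbed on the $\Theta$-side rather than degrading the $q$-side coefficient.
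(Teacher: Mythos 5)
Your proposal is essentially the paper's proof, which is a single sentence citing the $\mu_6$-version of \cite{IUTchIII}, Corollary 3.12; all of the surrounding translation and normalization discussion you give is consistent with how the paper sets things up. One small correction to your exposition: the coefficient $\frac{1}{2l}$ on the $q$-side of (\ref{1.10-eq1}) is the raw coefficient supplied directly by the comparison theorem and is \emph{not} obtained by averaging $\frac{j^2}{2l}$ over $j$ --- that averaging is carried out entirely on the $\Theta$-side, where it produces the coefficient $\frac{l+1}{24}$ appearing in (\ref{1.11-eq6}) within the proof of Proposition \ref{1-prop:Estimates the log-volume of initial Theta-data}.
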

\begin{proof}
The proposition follows from the $\mu_6$-version of \cite{IUTchIII}, Corollary 3.12.
\end{proof}

From now on, we shall focus on the special case of $F_{\tmod} = \Q$, which is enough for the present paper. Then the $j$-invarient $j(E_F)$ of $E_F$ is a rational number.
For each $v_p \in \V_{\Q}^{\non}$, 
write $ e_p \defeq e_{v_p},\; a_p \defeq a_{v_p},\;  b_p \defeq b_{v_p},\; d_p \defeq d_{v_p},\;  q_p \defeq q_{v_p}. $ Then $\log(\mathfrak{q}_{v_p}) = v_p(q_p) \cdot \log(p)$ for any prime number $p$, and by (\ref{1.8-eq: log(q)}) we have 
\begin{equation} \label{1.10-eq3: log(q)}
\log(\mathfrak{q}) = \sum_{p}  v_p(q_p) \cdot \log(p).
\end{equation}

\begin{prop} \label{1-prop:Estimates the log-volume of initial Theta-data}
Proceeding with the above notation, we have  
{\small
\begin{equation}  \label{1.11-eq1}
\begin{aligned}
    \frac{1}{6}\log(\mathfrak{q})
    \le & \frac{l^2+5l}{l^2+l-12}\cdot \big(\log(\pi) + \sum_{p\ge 2} d_p \cdot \log(p) + \sum_{e_p \ge p-1 } ( \frac{1}{p-1}+1-\frac{p-1}{e_p}) \cdot \log(p) 
     \\ &+ \sum_{e_p > p(p-1)} \log(\frac{e_p} {p-1} ) \big).
\end{aligned}
\end{equation} 
}
\end{prop}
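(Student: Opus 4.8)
The plan is to combine the IUT-theoretic inequality of Proposition~\ref{1-prop: IUTT main result}, namely $-\frac{1}{2l}\log(\mathfrak{q}) \le -|\log(\uu\Theta)|$, with an upper bound on $-|\log(\uu\Theta)|$ obtained by unwinding the definitions in Definition~\ref{1-def:Log-volume of initial Theta-data} and feeding in the local log-volume estimates of Lemma~\ref{1-lem:Estimation for p-adic log-volume} and Lemma~\ref{1-lem:Estimation for complex log-volume}. So the first step is to rewrite the target inequality \eqref{1.11-eq1} as a bound of the shape $\frac{1}{2l}\log(\mathfrak{q}) + (-|\log(\uu\Theta)|) \ge 0$ reorganized into $\log(\mathfrak{q})/6 \le (\text{leading coefficient})\cdot(\text{sum of local different/ramification/archimedean terms})$; then the content is entirely in estimating $-|\log(\uu\Theta)| = -\sum_{v_\Q}|\log(\uu\Theta)|_{v_\Q}$ from above by these same local terms.

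The key steps, in order: (1) For a fixed nonarchimedean $v_\Q = v_p$ and a fixed $j \in \{1,\dots,l^\divideontimes\}$ and a tuple $(v_0,\dots,v_j)$ lying over $v_p$, apply Lemma~\ref{1-lem:Estimation for p-adic log-volume}(ii) with $\overrightarrow{\lambda} = \frac{j^2}{2l}\uv_j(q_{v_j})$, $I = \{0,\dots,j\}$, and $k_i = K_{\uv_i}$, to get $-|\log(\Theta)|_{(v_0,\dots,v_j)} \le (\max\{\lceil -\lambda + d'_I + a_I\rceil,\ -\lambda + v_p(2p)|I|\} + b_I)\log(p)$. (2) Estimate the pieces $d'_I = d_I - \max_i d_i$, $a_I = \sum_i a_i$, $b_I = \sum_i b_i$ using Lemma~\ref{1-lem: Estimates of Differents} --- in particular, since $2l \mid \uv_i(q_{v_i})$ while $l$ is prime to $p$ and to the $q$-parameter order, one shows $d_i$ and $a_i, b_i$ are controlled: when $e_p \le p-2$ one has $a_i = -b_i = 1/e_p$ and $d_i \le 1 - 1/e_p$ or $d_i \le 1 + v_p(e_p)$ by Lemma~\ref{1-lem: Estimates of Differents}(iii); the "excess" $d_p$ over the tame value is what survives. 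The crucial arithmetic input is that $-\lambda = -\frac{j^2}{2l}\uv_j(q_{v_j})$ is a negative integer multiple of something divisible enough that the ceiling and the "$+1$" normalization collapse cleanly. (3) Sum over the tuples $(v_0,\dots,v_j)$ with the weights $\prod_i [(F_{\tmod})_{v_i}:\Q_{v_\Q}]$ as in \eqref{1.9-eq1}; since $F_{\tmod} = \Q$ here, $(\V_\tmod)_{v_p}$ is a single valuation and the weighted average is trivial, so $-|\log(\uu\Theta)|_{v_p, j}$ is just the single term above with $e_{v_i} = e_p$, $d_{v_i} = d_p$ for all $i$. (4) Average over $j$ via \eqref{1.9-eq2}: the $j$-dependence enters through $\lambda = \frac{j^2}{2l}v_p(q_p)e_p$ and through $|I| = j+1$; computing $\frac{1}{l^\divideontimes}\sum_{j=1}^{l^\divideontimes}(\cdots)$ produces the factor $\frac{l^2+5l}{l^2+l-12}$ via the identity $\frac{1}{l^\divideontimes}\sum_{j=1}^{l^\divideontimes}(j+1) = \frac{l^\divideontimes+3}{2}$ and $\frac{1}{l^\divideontimes}\sum j^2 = \frac{(l^\divideontimes+1)(2l^\divideontimes+1)}{6}$, matched against the $\frac{1}{2l}\log(\mathfrak{q})$ on the other side; here $\log(\mathfrak{q}) = \sum_p v_p(q_p)\log(p)$ by \eqref{1.10-eq3: log(q)}. (5) For the archimedean place, Lemma~\ref{1-lem:Estimation for complex log-volume}(iii) gives $-|\log(\Theta)|_{(v_0,\dots,v_j)} \le |I|\log(\pi) = (j+1)\log(\pi)$, which after averaging over $j$ contributes the $\log(\pi)$ summand (scaled by the same leading factor). (6) Assemble: sum \eqref{1.9-eq3} over all $v_\Q$, insert into \eqref{1.10-eq1}, and rearrange; the nonarchimedean contributions split according to whether $e_p \le p-2$ (contributing only $d_p\log(p)$ via the tame different), $e_p \in \{p-1\}$ or $e_p \le p(p-1)$ (contributing the correction $(\frac{1}{p-1} + 1 - \frac{p-1}{e_p})\log(p)$ from the gap between $a_p, b_p$ and $1/e_p$), and $e_p > p(p-1)$ (contributing the extra $\log(\frac{e_p}{p-1})$ from the worst-case bound on $b_p = \sup_{n\ge 0}\{n - p^n/e_p\}$).

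The main obstacle I expect is step (4) together with the bookkeeping in step (2): getting the floor/ceiling operations in Lemma~\ref{1-lem:Estimation for p-adic log-volume}(ii) to collapse requires carefully using the divisibility $2l \mid e_{v_j} v_{p_j}(q_{v_j})$ to ensure $\lambda \in \frac{1}{e_{v_j}}\Z$ sits correctly relative to $\mathbb{Z}$, and then tracking which of the two terms in the $\max$ dominates --- this depends on the relative sizes of $-\lambda$, $d'_I$, $a_I$ and $v_p(2p)|I|$, and one must check that for the relevant range the "$\max$" is the ceiling term and that $\lceil -\lambda + d'_I + a_I\rceil \le -\lambda + d_p + (\text{small})$ uniformly (for instance $a_I \le (j+1)\cdot a_p$ and $d'_I = j\cdot d_p$ when all local fields coincide as they do over $\Q$). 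The exact rational-function coefficient $\frac{l^2+5l}{l^2+l-12}$ must emerge as $\frac{l^\divideontimes \cdot 2l}{l^\divideontimes \cdot 2l - \text{something}}$ after moving the $j^2$-weighted $\lambda$-terms (which reconstruct $\frac{1}{6}\log(\mathfrak{q})$, note $\frac{1}{2l}\cdot\frac{1}{l^\divideontimes}\sum_j \frac{j^2}{2l} \cdot (\text{stuff}) $ and $\frac{1}{l^\divideontimes}\sum_j j^2 = \frac{(l-1)(l)}{12}\cdot\frac{?}{?}$ — precisely $\frac{1}{l^\divideontimes}\sum_{j=1}^{l^\divideontimes} j^2$ combines with the $\frac{1}{2l}$ and the $\frac{j^2}{2l}$ to give the $\frac16$) to one side, isolating the residual $\frac{5l+12}{\cdots}$-type discrepancy coming from the $|I|=j+1$ (rather than $j$) appearing in $a_I$, $b_I$, and $v_p(2p)|I|$; this is the step where an off-by-one in $|I|$ or a misattributed weight would produce the wrong constant, so it warrants the most care.
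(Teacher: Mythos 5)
Your proposal matches the paper's proof in both structure and key ingredients: starting from Proposition~\ref{1-prop: IUTT main result}, applying Lemma~\ref{1-lem:Estimation for p-adic log-volume}(ii) and Lemma~\ref{1-lem:Estimation for complex log-volume}(iii) locally, exploiting $F_{\tmod}=\Q$ to trivialize the weighted average, averaging over $j$ to produce $\frac{l+1}{24}$ and $\frac{l+5}{4}$ and hence the factor $\frac{l^2+5l}{l^2+l-12}$, and finally splitting the $a_p+b_p$ contribution by the three ramification regimes $e_p\le p-2$, $p-1\le e_p\le p(p-1)$, $e_p>p(p-1)$. The only detail you flag as an "obstacle" --- collapsing the ceiling --- is resolved in the paper exactly as you anticipate, by noting that $-\frac{j^2}{2l}v_p(q_p)+jd_p+(j+1)a_p\in\frac{1}{e_p}\Z$ (from $2l\mid e_pv_p(q_p)$) together with $d_p\ge 1-\frac{1}{e_p}$, and that the second arm of the max is also dominated by $(j+1)(d_p+a_p)$ since $d_p+a_p\ge v_p(2p)$, so one never needs to determine which arm wins.
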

\begin{proof}
By (\ref{1.9-eq3}) and (\ref{1.10-eq1}), we have 
{\small
\begin{equation}  \label{1.11-eq2}
 -\frac{1}{2l} \cdot \log(\mathfrak{q})\le -|\log(\uu\Theta)| 
 = (-|\log(\uu\Theta)|_{v_{\R}}) +  \sum_{v_p\in \V_{\Q}^{\non}} (-|\log(\uu\Theta)|_{v_p} ),
\end{equation} 
}
where $v_{\R}$ is the unique real archimedean valuation on $\Q$. 

We shall estimate $-|\log(\uu\Theta)|_{v_p}$ at first.
Let $v_p \in \V_{\Q}^{\non}$ and $1 \le j \le l^\divideontimes$.
Since $F_{\tmod} = \Q$, we have $(\V_{\tmod})_{v_p} = \{ v_p\}$. 
Then by (\ref{1.4-eq1}) and (\ref{1.9-eq1}), we have
{\small
\begin{equation} \label{1.11-eq3}
\begin{aligned}
   -|\log(\uu\Theta)|_{v_p, j} = & -|\log(\uu\Theta)|_{(\underbrace{v_p, \dots, v_p}_{j+1})}
 \le  \big(\max \{ \lceil -\frac{j^2}{2l}\cdot v_p(q_p) + j\cdot d_p + (j+1)\cdot a_p \rceil, 
   \\  & -\frac{j^2}{2l} \cdot v_p(q_p) + v_p(2p)\cdot(j+1) \big\} + (j+1) b_p \big)\cdot \log(p).
\end{aligned}
\end{equation}
}

By (\ref{1.8-eq: 2l divides e times v_p}) we have $2l \mid e_p \cdot v_p(q_p)$, 
hence $-\frac{j^2}{2l}\cdot v_p(q_p)\in \frac{1}{e_p} \Z$ and $-\frac{j^2}{2l}\cdot v_p(q_p) + j\cdot d_p + (j+1)\cdot a_p \in \frac{1}{e_p}\Z$.
Then since $d_p \ge 1 -  \frac{1}{e_p}$ [cf. Lemma \ref{1-lem: Estimates of Differents}], 
we have
\begin{equation} \label{1.11-eq4}
	\lceil -\frac{j^2}{2l}\cdot v_p(q_p) + j\cdot d_p + (j+1)\cdot a_p \rceil \le -\frac{j^2}{2l}\cdot v_p(q_p) +(j+1)(d_p+a_p) .
\end{equation}
Meanwhile, since $d_p \ge 1 -  \frac{1}{e_p}$ and $a_p = \frac{1}{e_p}\cdot\left\lceil \frac{e_p+1}{p-1} \right\rceil \ge v_p(2) + \frac{1}{e_p}$, we have $d_p + a_p \ge v_p(2p)$, hence 
\begin{equation} \label{1.11-eq5}
	 -\frac{j^2}{2l} \cdot v_p(q_p) + v_p(2p)\cdot(j+1) \le -\frac{j^2}{2l}\cdot v_p(q_p) +(j+1)(d_p+a_p) .
\end{equation}

By (\ref{1.11-eq3}),  (\ref{1.11-eq4}) and  (\ref{1.11-eq5}), we have 
\begin{equation*}
-|\log(\uu\Theta)|_{v_p, j}
    \le   \left(-\frac{j^2}{2l}\cdot v_p(q_p) + (j+1) (d_p + a_p + b_p) \right) \cdot \log(p)  .
\end{equation*}
Then by averaging the cases $1\le j \le l^\divideontimes$ and by (\ref{1.9-eq2}), we have
\begin{equation} \label{1.11-eq6}
 -|\log(\uu\Theta)|_{v_p} \le \big( - \frac{l+1}{24} \cdot v_p(q_p) + \frac{l+5}{4} \cdot (d_p + a_p + b_p) \big) \cdot \log(p).
\end{equation}

Similarly, by combining Lemma \ref{1-lem:Estimation for complex log-volume} (iii) with Definition \ref{1-def:Log-volume of initial Theta-data} (i), (ii), we can get
\begin{equation*}
-|\log(\uu\Theta)|_{v_{\R}} \le \frac{l+5}{4} \cdot \log(\pi).
\end{equation*}
Then by (\ref{1.11-eq2}) and  (\ref{1.11-eq6}), we have
{\small
\begin{equation*}
\begin{aligned}
    &-\frac{1}{2l} \cdot \log(\mathfrak{q}) \le -|\log(\uu\Theta)| 
    =  \sum_{v_p\in\V_{\Q}^{\non}} (-|\log(\uu\Theta)|_{v_p} ) + (-|\log(\uu\Theta)|_{v_{\R}})
    \\ \le& -\frac{l+1}{24}\cdot\sum_{p} v_p(q_p)\cdot \log(p) 
    +\frac{l+5}{4} \cdot\big(\log(\pi)+\sum_{p} (d_p + a_p + b_p)\cdot\log(p)  \big) .
\end{aligned}
\end{equation*}
}
Hence by (\ref{1.10-eq3: log(q)}) we have
\begin{equation} \label{1.11-eq7}
    \frac{1}{6}\log(\mathfrak{q})
    \le \frac{l^2+5l}{l^2+l-12}\cdot \big(\log(\pi) + \sum_{p\ge 2}(d_p + a_p + b_p)\cdot\log(p) \big) .
\end{equation}

To prove (\ref{1.11-eq1}), we are left to estimate $a_p + b_p$. 
Recall that
\begin{align*}
	a_p = \frac{1}{e_p}\cdot\left\lceil \frac{e_p+1}{p-1} \right\rceil \le \frac{1}{p-1} + \frac{1}{e_p},
	\quad b_p = \sup_{n \ge 0}\left\{ n - \frac{p^n}{e_p} \right\}.
\end{align*}
Since $n-\frac{p^n}{e_p} \ge (n+1) - \frac{p^{n+1}}{e_p} \Leftrightarrow p^n \ge \frac{e_p}{p-1} \Leftrightarrow n \ge \log_p(\frac{e_p} {p-1} )$,
when $n_0 \defeq \lceil \log_p(\frac{e_p} {p-1} ) \rceil \ge 1$, we have
\begin{align*}
	b_p = n_0 - \frac{p^{n_0} }{e_p} \le \log_p(\frac{e_p} {p-1}) + 1 - \frac{p}{e_p}
	\le \frac{\log(\frac{e_p}{p-1}  )}{\log(p)} + 1 - \frac{p}{e_p} .
\end{align*}

When $e_p \le p-2$, we have $a_p = - b_p = \frac{1}{e_p}$, hence $a_p + b_p = 0$;
when $p-1 \le e_p \le p(p-1)$, we have $b_p = 1 - \frac{p}{e_p}$, hence $a_p + b_p \le \frac{1}{p-1}+1-\frac{p-1}{e_p}$; 
when $e_p > p(p-1)$, we have  $a_p + b_p \le \frac{1}{p-1}+1-\frac{p-1}{e_p}+\frac{\log(e_p / (p-1) ) }{\log(p) }$.
Hence by (\ref{1.11-eq7}) we have
{\small
\begin{equation*}
\begin{aligned}
    \frac{1}{6}\log(\mathfrak{q})
    \le &\frac{l^2+5l}{l^2+l-12}\cdot \big( \log(\pi) + \sum_{p\ge 2} d_p \cdot \log(p) + \sum_{e_p \ge p-1 } ( \frac{1}{p-1}+1-\frac{p-1}{e_p}) \cdot \log(p)
    \\ &+ \sum_{e_p > p(p-1)} \log(\frac{e_p} {p-1} ) \big) .
\end{aligned}
\end{equation*}
}
\end{proof}

\subsection{Ramification datasets}
We shall introduce the concept of ramification dataset. 
Ramification dataset encodes the ramification information of an initial $\Theta$-data,
which is useful for explicit computation.

\begin{definition} \label{1-def: ramification dataset}
A \textbf{ramification dataset} $\mathfrak{R}$ consists of the following data:
\begin{itemize}
\item A prime number $l_0\ge 5$ called base prime, an positive integer $e_0$ called base index.
\item A non-empty finite set $S_{\gen}^{\multi}$ of positive integers called the set of general multiplicative indices, such that every element in $S_{\gen}^{\multi}$ equals $e$ or $e\cdot l$, where $1\le e\le e_0$ and $l\nmid e$.
\item A finite set of prime numbers $S_0$ called the set of special primes.
\item For each $p$ in $S_0$, a finite set $S_p^{\good}$ of positive integers called the set of good indices at $p$, and a finite set $S_p^{\multi}$ of positive integers called the set of multiplicative indices at $p$.
\end{itemize}
\end{definition}

\begin{definition} \label{1-def: ramification dataset for initial theta-data}
Let $\mathfrak{D} = (\overline{F}/F,X_F,l,\underline{C}_K, \uV,\V_{\tmod}^{\bad}, \underline{\epsilon})$ be a $\mu_6$-initial $\Theta$-data such that $F_{\tmod} = \Q$, 
with ramification indices $e_p$ defined as in the notation of Proposition \ref{1-prop:Estimates the log-volume of initial Theta-data}. 
Let $\eta(v_p)$ be the valuation in $\uV$ lying over $v_p$, cf. Lemma \ref{1-lem: initial Theta-data}.

(i)  Write $N$ for the denominator of the $j$-invarient $j(E_F) \in F_{\tmod} = \Q$ of the elliptic curve $E_F$ associated to $X_F$.
Then we shall say $\mathfrak{D}$ is \textbf{of type} $\bm{(l,N,N')}$ if we have $\log(N') = \log(\mathfrak q)$ for some positive integer $N'$.

(ii) We shall say $\mathfrak{D}$ \textbf{admits} a ramification dataset $\mathfrak{R}$ if in the notation of Definition \ref{1-def: ramification dataset}, the following conditions hold:
\begin{itemize}
\item $l_0 = l$, i.e. $l$ is the base prime of $\mathfrak{R}$.
\item For each $p \notin S_0$,  if $E_K$ has good reduction at $\eta(v_p)$ [i.e. $p\nmid N$], then $e_p = 1$; if $E_K$ has multiplicative reduction at $\eta(v_p)$ [i.e. $p\mid N$], then $e_p \in S_{\gen}^{\multi}$.
\item For each $p \in S_0$, if $E_K$ has good reduction at $\eta(v_p)$ [i.e. $p\nmid N$], then $e_p \in S_p^{\good}$; if $E_K$ has multiplicative reduction at $\eta(v_p)$ [i.e. $p\mid N$], then $e_p \in S_p^{\multi}$.
\end{itemize}
\end{definition}

\begin{cor} \label{1-cor: The log-volume of ramification dataset}
Let $\mathfrak{R}$ be a ramification dataset with base prime $l\ge 5$ and base index $e_0$. 
Then there exists an algorithm [whose construction is presented in the proof] to compute a real number $\Vol(\mathfrak{R}) \ge 0$ that depends only on $\mathfrak{R}$, such that for any $\mu_6$-initial $\Theta$-data  $\mathfrak{D}$ which admits $\mathfrak{R}$ and is of type $(l, N, N')$, proceeding with the notation of Proposition \ref{1-prop:Estimates the log-volume of initial Theta-data}, we have
{\small
\begin{equation} \label{1.14-eq: algorithm of log-volume}
    \frac{1}{6}\log(N') \le  \frac{l^2+5l}{l^2+l-12}\cdot \big( (1-\frac{1}{e_0  l})\cdot \log\rad(N) - \frac{1}{e_0}(1- \frac{1}{l}) \log\rad(N_l)  \big) + \Vol(\mathfrak{R}),
\end{equation}
}
where $N_l \defeq \prod_{p: l\mid v_p(N)} p^{v_p(N)}$.
\end{cor}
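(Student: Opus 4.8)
The plan is to unwind Proposition~\ref{1-prop:Estimates the log-volume of initial Theta-data}. Since $\mathfrak D$ is of type $(l,N,N')$ we have $\log(N')=\log(\mathfrak q)$, so by (\ref{1.11-eq1}), writing $C_l\defeq\tfrac{l^2+5l}{l^2+l-12}$ and
$$
T_p\ \defeq\ \Big(d_p+\big(\tfrac{1}{p-1}+1-\tfrac{p-1}{e_p}\big)\,[e_p\ge p-1]\Big)\log p\ +\ \log\!\big(\tfrac{e_p}{p-1}\big)\,[e_p>p(p-1)]
$$
(each $T_p\ge 0$), it suffices to bound $\sum_{p\ge 2}T_p$. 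Call a prime $p$ \emph{exceptional} if $p\in S_0$ or $p\le 1+\max S_{\gen}^{\multi}$, and \emph{generic} otherwise; there are only finitely many exceptional primes, and they are determined by $\mathfrak R$. The strategy is: bound $T_p$ pointwise for generic $p$ so that the generic part of the sum telescopes into the two $\rad$-terms, and absorb the exceptional part — a finite, $\mathfrak R$-computable quantity — into $\Vol(\mathfrak R)$.

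For a generic prime $p$ one has $p\nmid e_p$, $e_p<p-1$ and $e_p<p(p-1)$, so the two indicator terms in $T_p$ vanish and, by Lemma~\ref{1-lem: Estimates of Differents}(iii), $d_p=1-\tfrac1{e_p}$. Hence $T_p=0$ if $E_K$ has good reduction at $\eta(v_p)$ (then $e_p=1$, $d_p=0$, $p\ge 3$), while $T_p=(1-\tfrac1{e_p})\log p$ if it has multiplicative reduction; in the latter case $p\mid N$ and $e_p\in S_{\gen}^{\multi}$, so $e_p\le e_0l$ and therefore $T_p\le(1-\tfrac1{e_0l})\log p$. I claim moreover that $e_p$ is divisible by $l$ exactly when $l\nmid v_p(N)$: indeed (\ref{1.8-eq: 2l divides e times v_p}) gives $2l\mid e_p\,v_p(N)$, hence $l\mid e_p$ whenever $l\nmid v_p(N)$, and the reverse implication is supplied by the description of the ramification of $K=F(E_F[l])$ at $\eta(v_p)$ in Proposition~\ref{2-prop: torsion point field}. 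Consequently, if $p\mid N_l$ then $l\nmid e_p$, so $e_p$ is an ``$e$''-type element of $S_{\gen}^{\multi}$, whence $e_p\le e_0$ and $T_p\le(1-\tfrac1{e_0})\log p=\big((1-\tfrac1{e_0l})-\tfrac1{e_0}(1-\tfrac1l)\big)\log p$. Thus in all cases, for generic $p$,
$$
T_p\ \le\ \big(1-\tfrac1{e_0l}\big)\log p\cdot[p\mid N]\ -\ \tfrac1{e_0}\big(1-\tfrac1l\big)\log p\cdot[p\mid N_l].
$$

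Summing over generic $p$, and using that $\{p\text{ generic}:p\mid N\}\subseteq\{p\mid N\}$ while $\{p\text{ generic}:p\mid N_l\}$ omits only exceptional primes from $\{p\mid N_l\}$, we obtain
$$
\sum_{p\text{ generic}}T_p\ \le\ \big(1-\tfrac1{e_0l}\big)\log\rad(N)\ -\ \tfrac1{e_0}\big(1-\tfrac1l\big)\log\rad(N_l)\ +\ \tfrac1{e_0}\big(1-\tfrac1l\big)\sum_{p\text{ exceptional}}\log p.
$$
For the exceptional part, each $T_p$ with $p$ exceptional is bounded by the maximum of its defining expression as $e_p$ ranges over the relevant finite set ($S_p^{\good}$ or $S_p^{\multi}$ for $p\in S_0$, according to whether $p\mid N$; $S_{\gen}^{\multi}\cup\{1\}$ otherwise), with $d_p$ estimated by $d_p=1-\tfrac1{e_p}$ when $p\nmid e_p$ and $d_p\le 1+v_p(e_p)$ otherwise (Lemma~\ref{1-lem: Estimates of Differents}(iii)). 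All of this is explicitly computable from the finite data of $\mathfrak R$, so we may define
$$
\Vol(\mathfrak R)\ \defeq\ C_l\Big(\log\pi\ +\ \tfrac1{e_0}\big(1-\tfrac1l\big)\sum_{p\text{ exceptional}}\log p\ +\ \sum_{p\text{ exceptional}}\max_{e_p}T_p\Big),
$$
which is manifestly $\ge 0$ and is output by a finite algorithm. Combining with (\ref{1.11-eq1}): $\tfrac16\log(N')=\tfrac16\log(\mathfrak q)\le C_l\big(\log\pi+\sum_{p}T_p\big)\le C_l\big((1-\tfrac1{e_0l})\log\rad(N)-\tfrac1{e_0}(1-\tfrac1l)\log\rad(N_l)\big)+\Vol(\mathfrak R)$.

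The step I expect to be the main obstacle is the pointwise bound for generic multiplicative primes — specifically the equivalence ``$l\mid e_p\iff l\nmid v_p(N)$''. The coefficient of $\log\rad(N_l)$ in the target is negative, so we really do need to know that the expensive configuration $e_p=el$ with $p\mid N_l$ never occurs for a genuine $\mu_6$-initial $\Theta$-data; this is not forced by (\ref{1.8-eq: 2l divides e times v_p}) and the formal axioms of a ramification dataset alone (both sides of the equivalence are consistent with the divisibility $2l\mid e_p v_p(N)$), and it must be extracted from the arithmetic of the Tate parametrization and the structure of the $l$-torsion point field, i.e. from Proposition~\ref{2-prop: torsion point field}. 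Everything else — the evaluation of $T_p$ via Lemma~\ref{1-lem: Estimates of Differents}, the summation over primes, and the finite optimization over $S_0$ and the small primes that constitutes the algorithm — is routine bookkeeping.
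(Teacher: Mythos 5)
Your proof is correct, and it arrives at the claimed inequality by a genuinely different route from the paper's. The paper does \emph{not} quote (\ref{1.11-eq1}) as a black box; instead it re-derives a refined per-prime, per-$j$ estimate via the auxiliary functions $B_0(p,e,\delta,u,j)$, $B_1(p,e,\delta,u)$, $B_2(p)$, where $u$ is the residue of $v_p(q_p)$ modulo $2l$. Tracking $u$ lets the ceiling $\lceil -\frac{j^2}{2l}u + jd_p(e) + (j+1)a_p(e)\rceil + \frac{j^2}{2l}u$ be genuinely smaller than the crude $(j+1)(d_p+a_p)$ used in Proposition \ref{1-prop:Estimates the log-volume of initial Theta-data}, and the ``benefit'' term $-(1-\frac{1}{e_0 l_0(e)})\delta$ is built into $B_1$ before maximizing. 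The paper then checks $B_2(p)=0$ for all $p\notin S_0$ with $p\ge e_0 l+2$ (your ``generic'' primes, except that your threshold $1+\max S_{\gen}^{\multi}$ is possibly smaller than the paper's $e_0l+1$, which is harmless), and assembles $\Vol(\mathfrak R)$ from the finitely many exceptional $B_2(p)$. Your approach is conceptually simpler but produces a strictly coarser $\Vol(\mathfrak R)$; since the corollary is an existence/algorithm statement this suffices, but note that the downstream quantitative results (Remark \ref{1-rmk: The log-volume of ramification dataset2}, Proposition \ref{2-prop: the 1st ABC inequality}, Lemma \ref{4-lem:upper bound I}, Table \ref{3-table: Fermat upper bound 1}) depend on the paper's specific, tighter $\Vol(\mathfrak R)$, so one cannot simply substitute your definition.

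Your diagnosis of the real content is exactly right: both proofs hinge on the equivalence that, for $p\mid N$, one has $l\mid e_p \iff l\nmid v_p(N)$, which is what makes $\sum_p(\frac{1}{l_0(e_p)}-\frac{1}{l})\delta_p\log p$ collapse to $(1-\frac{1}{l})\log\rad(N_l)$ in the paper's (\ref{1.14-eq3}). The forward direction is slightly more than a formal consequence of (\ref{1.8-eq: 2l divides e times v_p}) (which as literally stated concerns $v\in\V_{\tmod}^{\bad}$, where $q_v$ is the actual Tate parameter, not arbitrary $p\mid N$), but as you note the real source is Proposition \ref{2-prop: torsion point field}(iv) together with $l\nmid[F:\Q]$: since $2l$-torsion is defined over $K$, for multiplicative $p\neq l$ one has $e(K_{\uv}/F_{v'})=l/\gcd(v'(q_K),l)$ with $v'(q_K)=e(F_{v'}/\Q_p)\,v_p(N)$ and $l\nmid e(F_{v'}/\Q_p)$, giving the biconditional. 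The paper's proof asserts this equivalence with the same amount of justification as yours, so there is no additional gap on your side relative to the paper.
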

\begin{proof}
We continue to use the notation of Proposition \ref{1-prop:Estimates the log-volume of initial Theta-data} and Definition \ref{1-def: ramification dataset}. We shall provide an algorithm for computing the value of $\Vol(\mathfrak{R})$:

\textbf{Step 0:} 
Let  $p \le e_0 l +1$ be a prime number. 
When $p\notin S_0$, take $e = 1, \delta = 0$ or $e\in S_{\gen}^{\multi}, \delta = 1$; 
when $p\in S_0$, take $e\in S_p^{\good}, \delta = 0$ or take $e\in S_p^{\multi}, \delta = 1$. Then we get finitely many triples $(p, e, \delta)$.

\textbf{Step 1}: 
For each triple $(p, e, \delta)$ in step 0, we shall define several functions.
Write $a_p(e) \defeq \frac{1}{e}\cdot\left\lceil \frac{e+1}{p-1} \right\rceil$, $b_p(e) \defeq \sup_{n \ge 0}\left\{ n - \frac{p^n}{e} \right\}$. When $p\nmid e$, write $d_p(e) \defeq 1-\frac{1}{e}$; when $p\mid e$, write $d_p(e) \defeq 1+v_p(e)$.
When $\delta = 0$, take $u = 0$; when $\delta = 1$, let $u$ be any integer such that $0\le u \le 2l-1$ and $2l \mid e\cdot u$. 
When $l \mid e$, write $l_0(e) = l$; when  $l \nmid e$, write $l_0(e) = 1$. 
Then for $e\in S_{\gen}^{\multi}$, we have $e \le e_0 \cdot l_0(e)$.
For each integer $1\le j \le l^\divideontimes$, define
{\small
\begin{equation*}
\begin{aligned}
B_0(p, e, \delta, u, j) \defeq & \max\{ \lceil -\frac{j^2}{2l} \cdot u + j\cdot d_p(e) + (j+1)\cdot a_p(e) \rceil  + \frac{j^2}{2l}\cdot u, v_p(2p)\cdot(j+1) \}  \\& + (j+1)\cdot b_p(e),
\\
B_1(p, e, \delta, u) \defeq 
& \frac{1}{l^\divideontimes} \cdot \big(\sum_{j=1}^{l^\divideontimes} B_0(p, e, \delta, u, j) \big) \cdot \frac{4}{l+5} - (1-\frac{1}{e_0\cdot l_0(e)}) \cdot \delta,
\\ B_2(p) \defeq & \max_{(e, \delta, u)}\{ B_1(p, e, \delta, u) \}.
\end{aligned}
\end{equation*}
}

\textbf{Step 2}: 
Finally, define $\Vol(\mathfrak{R})$ by the equation
\begin{equation} \label{1-3-eq: definition of Vol(R)}
\Vol(\mathfrak{R}) \defeq \max\{0,\frac{l^2 + 5l}{l^2 + l - 12}\cdot \big( \log(\pi) + \sum_{p\le e_0 l +1\,\text{or}\, p\in S } B_2(p) \cdot \log(p) \big) \}.
\end{equation}

We claim that (\ref{1.14-eq: algorithm of log-volume}) is valid for the $\Vol(\mathfrak{R})$ defined in this way. To show this, for each prime number $p$, we shall write $\delta_p = 0$ if $p\nmid N$, and write $\delta_p = 1$ if $p\mid N$. 
Recall that since $l\nmid [F:\Q]$, we can see that $l\mid e_p$ if and only if $E_K$ has multiplicative reduction at $\eta(v)$ and $l \nmid v_p(N)$, which is equivalent to $(\frac{1}{l_0(e)} - \frac{1}{l}) \cdot \delta_p = (1-\frac{1}{l}) \delta_p \neq 0$.
Hence
\begin{equation} \label{1.14-eq3}
\log\rad(N) = \sum_{p} \delta_p \log(p), 
\; (1-\frac{1}{l}) \log\rad(N_l) = \sum_p (\frac{1}{l_0(e)} - \frac{1}{l} ) \cdot \delta_p \log(p).
\end{equation}

Write $u_p$ for the remainder of $v_p(q_p)$ modulo $p$.
Then we have $a_p = a_p(e_p)$, $b_p = b_p(e_p)$, $d_p \le d_p(e_p)$; 
$\delta_p = 0$ (resp. $\delta_p = 1$) if and only if $E_k$ has good (resp. multiplicative) reduction at $\eta(v_p)$;
$u_p = 0$ when $\delta_p = 0$, and $2l \mid e_p \cdot u_p$ when $\delta_p = 1$.
Hence $(p, e_p, \delta_p, u_p)$ is in the domain of the function $B_1(p, e, \delta, u)$. 

Similar to the proof of Proposition \ref{1-prop:Estimates the log-volume of initial Theta-data}, by (\ref{1.4-eq1}) and (\ref{1.9-eq1}), for $1\le j \le l^\divideontimes$, we have
\begin{equation*}
 -|\log(\uu\Theta)|_{v_p, j} + \frac{j^2}{2l} \cdot v_p(q_p)
 \le B_0(p, e_p, \delta_p, u_p, j) \cdot \log(p).
\end{equation*}
Then by averaging the cases $1\le j \le l^\divideontimes$, we have
{\small
\begin{equation*}
\begin{aligned}
-|\log(\uu\Theta)|_{v_p} + \frac{l+1}{24} \cdot v_p(q_p)
& \le  \frac{l+5} {4}\cdot \big( B_1(p, e_p, \delta_p, u_p)  +  (1-\frac{1}{e_0 l_0(e)}) \delta_p  \big) \cdot \log(p)
\\ & \le \frac{l+5}{4}\cdot \big( B_2(p)  +  (1-\frac{1}{e_0 l}) \delta_p - \frac{1}{e_0}(\frac{1}{l_0(e)} - \frac{1}{l} )\delta_p  \big) \cdot \log(p).
\end{aligned}
\end{equation*}
}
Hence by summing over $p$ and $v_{\R}$, and by (\ref{1.14-eq3}) [cf. the proof of Proposition \ref{1-prop:Estimates the log-volume of initial Theta-data}], we can get
{\small
\begin{equation} \label{1.14-eq2}
\begin{aligned}
    \frac{1}{6}\log(\mathfrak{q}) \le \frac{l^2+5l}{l^2+l-12}\cdot &\big( (1-\frac{1}{e_0l})\cdot \log\rad(N) - \frac{1}{e_0}(1- \frac{1}{l}) \log\rad(N_l)  
    \\& + \log(\pi) + \sum_{p} B_2(p) \cdot \log(p) \big).
\end{aligned}
\end{equation}
}

By (\ref{1.14-eq2}) and the definition of $\Vol(\mathfrak R)$, to prove (\ref{1.14-eq: algorithm of log-volume}), it suffices to show that $B(p) = 0$ when $p \notin S_0$ and $p \ge e_0 l + 2 \ge 3$.
For such $p$, consider all the possible $B_1(p,e,\delta,u)$. 

Recall that since $p\notin S$, we have $e = 1, \delta = 0$ or $e\in S_{\gen}^{\multi}, \delta = 1$.

When $e=1, \delta=0$, sicne $p \ge 3$, we have $a_p(e) = 1, b_p(e) = -1, d_p(e) = 0, u = 0, v_p(2p) = 1$. Hence $B_0(p,e,\delta,u,j) = 0$ and $B_1(p,e,\delta,u) = 0$.

When $e\in S_{\gen}^{\multi}, \delta = 1$, since $p \ge e_0 l + 2 \ge e + 2$, we have $a_p(e) = \frac{1}{e}, b_p(e) = -\frac{1}{e}, d_p(e) = 1-\frac{1}{e}, v_p(2p) = 1$. Then similar to the proof of (\ref{1.11-eq6}), we can show that
\begin{equation*} 
B_0(p,e,\delta,u,j) \le \frac{l+5}{4} \cdot (1-\frac{1}{e}), \; B_1(p,e,\delta,u) \le (1-\frac{1}{e}) - (1 - \frac{1}{e_0 l_0(e)}) \le 0.
\end{equation*}

Hence when $p \notin S$ and $p \ge e_0 l + 2$, we have 
\begin{equation*}
B_2(p) = \max_{(e, \delta, u)}\{ B_1(p, e, \delta, u) \} = 0.
\end{equation*} 
This proves the corollary.
\end{proof}

\begin{tiny-remark} \label{1-rmk: The log-volume of ramification dataset}
(i) Let $\mathfrak{R}$ be the the ramification dataset in \ref{1-cor: The log-volume of ramification dataset}, and let $p$ be a prime number.
By taking $n=0$, we have $b_p(e) \ge n - \frac{p^n}{e} = -\frac{1}{e}$, hence
in the definition of $B_0(p,e,\delta,u,j)$, we have $B_0(p,e,\delta,u,j) \ge (j+1)(v_p(2p)+b_p(e)) \ge 0$. Then we have $B_1(p,e,\delta,u) \ge  (\frac{1}{e_0 l} - 1) \delta$ and $B_2(p) \ge \max\limits_{(e, \delta, u)} \{(\frac{1}{e_0 l} - 1) \delta\} \ge -1$.

Suppose that $p \notin S_0$, or  $S_p^{\good}$ is not empty, then there exists a tuple $(p,e,\delta,u)$ in the domain of $B_1(p,e,\delta,u)$ with $\delta = 0$. 
Hence we have $B_2(p) \ge \max\limits_{(e, \delta, u)} \{(\frac{1}{e_0 l} - 1) \delta\} = 0$.

(ii) Suppose that $S_0$ is empty, or $S_p^{\good}$ is not empty for each $p\in S_0$. Then for each prime number $p$,we have $B_2(p) \ge 0$. Hence by (\ref{1-3-eq: definition of Vol(R)}) we have 
$\Vol(\mathfrak{R}) \ge \max\{0,\frac{l^2 + 5l}{l^2 + l - 12}\cdot  \log(\pi) \} > \log(\pi).$
\end{tiny-remark}

\begin{tiny-remark} \label{1-rmk: The log-volume of ramification dataset2}
(i) For each $p\notin S_0$, write $e_0(p) = e_0 l$. For each $p\in S_0$, put
$$e_0(p) \defeq  \max_{e\in S_p^{\good}\cup S_p^{\multi}}\{e\},\; d_0(p) = \max_{e\in S_p^{\good}\cup S_p^{\multi}}(1+v_p(e)).$$
In addition, recall that for a proposition $P$, the Iverson Bracket is defined by $[P]_{\text{IB}} = 1$ if $P$ is true, and $[P]_{\text{IB}} = 0$ if $P$ is false. 

Then by a similar approach to the proof of Proposition \ref{1-prop:Estimates the log-volume of initial Theta-data}, we can show that 
{\small
\begin{equation}  \label{1.11-uppper bound of B_2(p)}
\begin{aligned}
B_2(p) \cdot \log(p) \le & [e_0(p) \ge p-1]_{\text{IB}} \cdot ( \frac{1}{p-1}+1-\frac{p-1}{e_0(p)}) \cdot \log(p) \\ & + [p\in S_0]_{\text{IB}} \cdot d_0(p) \cdot \log(p)  
  + [e_0(p) > p(p-1)]_{\text{IB}} \cdot \log(\frac{e_0(p)} {p-1} ) .
\end{aligned}
\end{equation} 
}

(ii) By a similar approach to the proof of Proposition \ref{1-prop:Estimates the log-volume of initial Theta-data}, we can also show that 
{\small
\begin{equation} \label{1.11-uppper bound of vol(R)}
\begin{aligned}
    \frac{l^2 + l - 12}{l^2 + 5l}\cdot \Vol(\mathfrak R)
    \le \log(\pi) + & \sum_{e_0(p) \ge p-1 } ( \frac{1}{p-1}+1-\frac{p-1}{e_0(p)}) \cdot \log(p) 
     \\&  +  \sum_{p\in S_0} d_0(p) \cdot \log(p)  
      + \sum_{e_0(p) > p(p-1)} \log(\frac{e_0(p)} {p-1} ) .
\end{aligned}
\end{equation} 
}

\end{tiny-remark}

\begin{tiny-remark}
It is worth noting that the value of $\Vol(\mathfrak R)$ can be computed explicitly by the algorithm in the proof of Corollary \ref{1-cor: The log-volume of ramification dataset}.
It is also worth noting that in the proof of each inequality in this section, the condition that 3-torsion points of $E_F$ are defined over $F$ is not used.
\end{tiny-remark}

\section{Construction of initial \texorpdfstring{$\Theta$}{}-Data}
In this section, we will prove a local version of effective abc-type inequalities associated to the triples $(a,b,c)$, where $a,b,c$ are non-zero coprime integers such that $$a+b=c.$$

\subsection{The general construction}
We shall provide a procedure for constructing $\mu_6$-initial $\Theta$-data from an elliptic curve defined over $\Q$.
Some results in the arithmetic of elliptic curves are needed for the construction.

\begin{prop} \label{2-prop: torsion point field}
Let $K$ be a number field, $E$ be an elliptic curve defined over $K$, 
$j(E)\in K$ be the $j$-invariant of $E$.
Let $w\in \V(L)$ be a nonarchimedean valuation of $L$ lying over some $v\in \V(K)^{\non}$, with residue characteristic $p$. 
Then:

(i) For any integer $n\ge 2$, the $n$-torsion point field $K(E[n])$ is Galois over $K$;
we have $\mu_n \subseteq K(E[n])$, where $\mu_n$ denotes the group of $n$-th roots of $1$ in $\overline{\Q}$; 
if $m,n\ge 2$ are coprime integers, then $K(E[mn]) = K(E[m],E[n])$, $K(E[m])\cap K(E[n])=K$.

(ii) Suppose that $l\ge 3$ is a prime number, $l\neq p$, 
and all the $l$-torsion points of $E$ are defined over $K_v$ [i.e. $K_v = K_v(E[l])$]. 
Then $E$ has semi-stable reduction at $v$.

(iii) Suppose that $E$ has good reduction at $v$. 
If $l\neq p$, then the field extension $L_w/K_v$ is unramified;
if $l=p$ and $v$ is not ramified over $\Q$, 
then the ramification index of the field extension $L_w/K_v$ belongs to the set $\{l-1,l(l-1),l^2-1\}$.

(iv) Suppose that $E$ has bad multiplicative reduction at $v$, $n\ge2$ is an integer. 
Write $q_K$ for the $q$-parameter of $E_{K_v}\defeq E\times_K K_v$,
$e_v$ for the ramification index of the field extension $K(E[n])/K$ at $v$.
Then we have $v(q_K) = -v(j(E))$, and there exists an unramified field extension $L/K_v$ of degree 1 or 2,
such that $L(E[n]) = L(\mu_n,q_K^{1/n})$. 
Hence if $p \nmid n$, then the field extension $K_v(E[n])/K_v$ is tamely ramified, and $e_v = n/\gcd(v(q_K),n)$; 
if $v_p(n)=1$ and the field extension $K_v/\Q_p$ is unramified, then $e_v = (p-1) \cdot n/\gcd(v(q_K),n)$.

(v) The field of moduli [cf., e.g., \cite{AbsTopIII}, Definition 5.1, (ii)] of $E$ is the field generated over $\Q$ by the $j$-invariant $j(E)$ of $E$. 
If there exists a prime number $l\ge 3$, such that all the $l$-torsion points of $E$ are defined over $K$, 
then any model of $E_{\overline{K}} \defeq E\times_K \overline{K}$ over $K$ is isomorphic to $E$, hence any model of $E_{\overline{K}}$ over its field of moduli is also a model of $E$.
\end{prop}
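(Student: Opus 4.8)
The plan is to prove the five assertions separately; each is a standard fact about the Galois action on the torsion of an elliptic curve, so the work lies in assembling the correct inputs — the Weil pairing, the criterion of N\'eron--Ogg--Shafarevich, the theory of the Tate curve, and the rigidity of level-$l$ structures for $l\ge 3$. For (i): $K(E[n])/K$ is Galois because $E[n]\subseteq E(\overline K)$ is a finite $G_K$-stable set, so $K(E[n])$ is the fixed field of $\ker\bigl(G_K\to\Aut(E[n])\bigr)$; the inclusion $\mu_n\subseteq K(E[n])$ is forced by the non-degenerate, $G_K$-equivariant Weil pairing $e_n\colon E[n]\times E[n]\to\mu_n$, whose image generates $\mu_n$; and for coprime $m,n$ the decomposition $E[mn]=E[m]\oplus E[n]$ yields $K(E[mn])=K(E[m],E[n])$ at once, while $K(E[m])\cap K(E[n])=K$ follows by translating into the corresponding statement for the mod-$m$ and mod-$n$ Galois images. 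For (v): the field of moduli of $E$ equals $\mathbb Q\bigl(j(E)\bigr)$ since $E^{\sigma}$ is $\overline K$-isomorphic to $E$ iff $j(E)^{\sigma}=j(E)$, so the field of moduli is the fixed field of $\{\sigma:j(E)^{\sigma}=j(E)\}$; for the rigidity clause, if $E[l]\subseteq E(K)$ with $l\ge 3$ then $E$ together with its rational level-$l$ structure has no non-trivial automorphism ($-1$ already moves a point of $E[l]$), so any competing model of $E_{\overline K}$ over $K$ carrying the same level structure has trivial descent cocycle in $H^1\bigl(G_K,\Aut(E_{\overline K})\bigr)$ and is $K$-isomorphic to $E$; the final clause then follows by base-changing an explicit elliptic curve of $j$-invariant $j(E)$ over $\mathbb Q\bigl(j(E)\bigr)$.

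For (ii) and the local computations (iii), (iv) I would argue as follows. N\'eron--Ogg--Shafarevich gives that good reduction at $v$ is equivalent to $E[l]$ being unramified for one (hence every) $l\neq p$, which is precisely the statement ``good reduction $\Rightarrow L_w/K_v$ unramified'' of (iii) when $l\neq p$; and for (ii), if $E[l]\subseteq E(K_v)$ with $l\ge 3$, $l\neq p$, inertia acts trivially on $E[l]$, so the finite group measuring the failure of semistability — which acts faithfully on $E[l]$ for $l\ge 3$ — is trivial, i.e.\ $E$ is already semistable at $v$. For the case $l=p$ of (iii) (with $K_v/\mathbb Q_p$ unramified and good reduction): the connected--\'etale sequence of $E[p]$ over $\overline{K_v}$ splits the problem by reduction type — in the ordinary case $E[p]$ is an extension of an unramified $\mathbb Z/p$ by a twist of $\mu_p$, so $K_v(E[p])=K_v(\mu_p,\beta^{1/p})$ for some $\beta\in K_v^{\times}$, of ramification index $p-1$ or $p(p-1)$ according as the Kummer part is unramified or not; in the supersingular case $E[p]$ is the $p$-torsion of a height-$2$ formal group and $K_v(E[p])/K_v$ is totally (wildly) ramified of degree $p^2-1$, the image of inertia being a non-split Cartan — giving $\{l-1,l(l-1),l^2-1\}$. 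For (iv): $v(q_K)=-v(j(E))$ comes from $j(E_{q_K})=q_K^{-1}+744+\cdots$; multiplicative reduction becomes split after an unramified quadratic twist, so there is an unramified $L/K_v$ of degree $1$ or $2$ with $E_L\cong E_{q_K}$, and the Tate parametrization $\overline K_v^{\times}/q_K^{\mathbb Z}\xrightarrow{\ \sim\ }E_{q_K}(\overline K_v)$ identifies $E[n]$ with the group generated by $\mu_n$ and $q_K^{1/n}$, i.e.\ $L(E[n])=L(\mu_n,q_K^{1/n})$; the two ramification formulas then follow from an elementary count, namely $e_v=n/\gcd(v(q_K),n)$ when $p\nmid n$ ($L(\mu_n)/L$ unramified, $q_K^{1/n}$ tamely ramified of that degree) and $e_v=(p-1)\,n/\gcd(v(q_K),n)$ when $v_p(n)=1$ and $K_v/\mathbb Q_p$ is unramified ($L(\mu_p)/L$ contributing the extra factor $p-1$).

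The main obstacle is the good-reduction case $l=p$ of (iii) — in particular pinning down the degree $l^2-1$ in the supersingular case, which requires the structure of the $p$-torsion of a formal group of height $2$ and the identification of the image of inertia with a non-split Cartan — together with the case-sensitive ramification bookkeeping for the Tate curve in (iv); the remaining assertions are formal or follow from cited results.
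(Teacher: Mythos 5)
Your proposal takes a genuinely different route from the paper: the paper's proof of this proposition is a bare list of citations (Adelmann's book on torsion-point fields for (i), Mochizuki's \cite{IUTchIV} Prop.\ 1.8 for (ii) and (v), Bosch--L\"utkebohmert--Raynaud and Serre for (iii), Silverman's \textit{Advanced Topics} for (iv)), while you reconstruct the statements from first principles. The ingredients you deploy --- the Weil pairing, N\'eron--Ogg--Shafarevich, the connected--\'etale sequence of $E[p]$ and Serre's non-split-Cartan image of inertia in the supersingular case, and the Tate parametrization with the unramified quadratic twist to the split case --- are exactly the tools underneath those references, so the two arguments are parallel in mathematical content; the paper simply never unwinds them. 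Your treatment of (ii), (iii), (iv) and the first half of (v) is essentially a correct sketch of what the cited sources prove.

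There are, however, two genuine gaps. In (i), the sentence asserting that $K(E[m])\cap K(E[n])=K$ ``follows by translating into the corresponding statement for the mod-$m$ and mod-$n$ Galois images'' is circular: that the mod-$mn$ image splits as the full product of the mod-$m$ and mod-$n$ images is precisely what must be shown (a Goursat-type claim), and it is false without further hypotheses. For instance with $m=2$, $n=3$: if $\Delta_E$ equals $-3$ times a square (e.g.\ $E\colon y^2=x^3+3$, where $\operatorname{disc}(x^3+3)=-3^5$), then $\Q(\sqrt{-3})\subseteq\Q(E[2])$ as the quadratic subfield of the splitting field of the $2$-division polynomial, while simultaneously $\Q(\sqrt{-3})=\Q(\mu_3)\subseteq\Q(E[3])$ by the Weil pairing, so the intersection strictly contains $\Q$. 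Some additional hypothesis (such as surjectivity of one of the residual representations, which is in fact what the paper uses in its applications) is required, and your sketch neither supplies it nor flags the issue. In (v), the descent-cocycle argument with level-$l$ structure only controls models that \emph{come equipped with a compatible $K$-rational level-$l$ structure}. A quadratic twist $E^{d}$ is a $K$-model of $E_{\overline K}$ which need not be $K$-isomorphic to $E$ and which in general carries no $\Gal(\overline K/K)$-fixed level-$l$ structure, so your computation in $H^{1}\bigl(\Gal(\overline K/K),\Aut(E_{\overline K})\bigr)$ never sees it. The clause has to be read in Mochizuki's technical sense of ``model'' from \cite{IUTchIV}, Prop.\ 1.8 (ii), (iv) (phrased via the associated hyperbolic orbicurve and its field of moduli), and a correct proof must go through that framework rather than through a plain twist classification.
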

\begin{proof}
For assertion (i), cf. \cite{TorsionPointFields}, Proposition 5.2.1 and Proposition 5.2.2. 
For assertion (ii), cf. \cite{IUTchIV}, Proposition 1.8 (v).
For assertion (iii), cf. \cite{Bosch1990PropertiesON}, \S7.4 Theorem 5 and the introduction of \cite{Serre1971PropritsGD}.
Assertion (iv) follows from the theory of Tate curves, cf. \cite{Silverman1994AdvancedTI}, \S5 for a reference. 
Assertion (v) follows from \cite{IUTchIV}, Proposition 1.8 (ii), (iv).
\end{proof}

\begin{prop} \label{2-prop: elliptic curves admitting cores}
Let $E$ be an elliptic curve defined over a number field $K$,
$X$ be the punctured elliptic curve [defined over $K$] associated to $E$.
Suppose that $$j(E)\notin \{0,2^6\cdot 3^3,2^2\cdot 73^3\cdot 3^{-4},2^{14}\cdot 31^3\cdot 5^{-3}\},$$ then $X$ admits a $K$-core [cf. \cite{CanLift}, remark 2.1.1].
\end{prop}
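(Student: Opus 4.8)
The plan is to reduce the statement to the characterization of hyperbolic orbicurves that fail to admit a core, which in the once-punctured elliptic curve case is governed entirely by the $j$-invariant and a finite list of exceptional elliptic curves with extra automorphisms of the associated orbicurve. First I would recall from \cite{CanLift}, Remark 2.1.1 (and the surrounding discussion in \S2 there) that a hyperbolic curve $X$ over a field of characteristic zero fails to admit a core precisely when $X$ is ``$\mathrm{arithmetic}$'' or ``$\mathrm{Shimura}$''-type in the sense of Mochizuki, and that for a once-punctured elliptic curve the obstruction is equivalent to the elliptic curve $E$ having an isogeny or automorphism structure forcing the orbifold quotient stack $[X/\mathrm{Aut}]$ to coincide with a very special arithmetic quotient. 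The finite set of $j$-invariants listed in the statement is exactly the set of $j$-invariants of elliptic curves $E$ for which the associated once-punctured orbicurve $X$ is ``isogenous to a modular curve'' in the relevant sense, hence does not admit a $K$-core.

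Concretely, I would argue as follows. By \cite{CanLift}, the once-punctured elliptic curve $X$ associated to $E$ admits a $K$-core unless the hyperbolic orbicurve obtained from $X$ by passing to the ``$(1,1)$-orbifold'' (i.e. dividing by the unique order-$2$ automorphism $[-1]$ of $E$ fixing the origin, and recording the puncture) is one of the finitely many orbicurves $\mathbb{H}/\Gamma$ with $\Gamma$ an arithmetic triangle-type group; these correspond bijectively to the four exceptional $j$-values in the statement. I would then verify that the listed values $0$, $2^6\cdot 3^3$, $2^2\cdot 73^3\cdot 3^{-4}$, $2^{14}\cdot 31^3\cdot 5^{-3}$ are precisely the $j$-invariants arising this way — for $j=0$ this is the extra automorphism case (the curve with $j=0$ has automorphism group of order $6$, giving an exceptional orbifold structure), and the other three are the non-trivial cases coming from the three relevant arithmetic quotients, matching the computation in \cite{CanLift}, Remark 2.1.1 exactly. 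Since $j(E)$ lies outside this set by hypothesis, $X$ admits a $K$-core.

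The main obstacle is not any computation but rather pinning down the precise statement being invoked: one must be careful that \cite{CanLift} is stated over a field of characteristic zero (or with suitable hypotheses) and that the notion of ``$K$-core'' used here — a terminal object in the category of finite étale correspondences, in the sense made precise in \cite{CanLift}, \S2 — is the one for which this exact finite list of exceptional $j$-invariants is the complete obstruction. Once that is matched up, the proof is essentially a citation together with the elementary observation that the four excluded $j$-values are exactly the exceptional ones; there is no delicate estimate or construction required. I would therefore write the proof as a short paragraph: ``This is immediate from \cite{CanLift}, Remark 2.1.1: the once-punctured elliptic curve associated to $E$ fails to admit a $K$-core if and only if $j(E)$ belongs to the finite set $\{0, 2^6\cdot 3^3, 2^2\cdot 73^3\cdot 3^{-4}, 2^{14}\cdot 31^3\cdot 5^{-3}\}$, which is excluded by hypothesis.''
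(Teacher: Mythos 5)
Your conceptual framing is on the right track: a once-punctured elliptic curve fails to admit a $K$-core exactly when the associated orbicurve is ``arithmetic,'' and that condition is governed by a finite list of $j$-invariants containing the four values excluded by the hypothesis. But the paper does not get that finite list from \cite{CanLift}, Remark 2.1.1, and as far as I can tell neither can you. Remark 2.1.1 of \cite{CanLift} is cited in the \emph{statement} of the proposition only to pin down the meaning of ``$K$-core''; it does not contain the classification. The paper's actual proof is a one-line citation to \cite{Sijs}, Table 4 (together with \cite{Sijs}, Lemma 1.1.1 and \cite{CanLift}, Proposition 2.7), which is where the explicit determination of the arithmetic once-punctured elliptic orbicurves and their $j$-invariants lives. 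Your closing sentence --- ``This is immediate from \cite{CanLift}, Remark 2.1.1: the once-punctured elliptic curve associated to $E$ fails to admit a $K$-core if and only if $j(E)$ belongs to [the finite set]'' --- therefore attributes the key input to a reference that does not supply it.

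The sketched ``derivation'' in the middle of your proposal does not repair this. You gesture at the $(1,1)$-orbifold, arithmetic triangle-type groups, and say the four exceptional values ``match the computation in \cite{CanLift}, Remark 2.1.1 exactly,'' but you never actually produce the list; you presuppose it. There is also a small inaccuracy in the way you partition the four values: you treat $j=0$ as the lone ``extra automorphism'' case and describe ``the other three'' as arithmetic-quotient cases, but $j = 2^6\cdot 3^3 = 1728$ is equally a CM/extra-automorphism case ($\mathrm{Aut}$ of order $4$), so the CM cases are $\{0, 1728\}$ and the remaining two are the genuinely ``non-obvious'' arithmetic values. To fix the proof you should simply replace the citation: the finite list of exceptional $j$-invariants is the content of \cite{Sijs}, Table 4 (cf.\ \cite{Sijs}, Lemma 1.1.1 and \cite{CanLift}, Proposition 2.7), and the proposition is immediate from that. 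If you want to keep the explanatory paragraph about arithmetic orbicurves, present it as motivation rather than as the source of the list.
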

\begin{proof}
The proposition follows immediately from \cite{Sijs}, Table 4 [cf. also \cite{Sijs}, Lemma 1.1.1; \cite{CanLift}, Proposition 2.7].
\end{proof}

\begin{prop} \label{2-prop: rational isogeny}
Let $E$ be an elliptic curve defined over $\Q$ with $j$-invariant $j(E)\in \Q$, $l$ be a prime number. 
Suppose that (a) $l\ge 23$ and $l\neq 37,43,67,163$; or (b) $E$ is semi-stable and $l\ge 11$; or (c) $l\ge 11$, $l\neq 13$ and the denominator of $j(E)$ is not a power of $2$.
Then $E$ doesn't admit a $\Q$-rational isogeny of degree $l$.
\end{prop}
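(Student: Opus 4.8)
The plan is to deduce all three cases from Mazur's classification of rational isogenies of prime degree over $\mathbb{Q}$: if an elliptic curve $E/\mathbb{Q}$ admits a $\mathbb{Q}$-rational cyclic isogeny of prime degree $l$ — equivalently, a $G_{\mathbb{Q}}$-stable subgroup of order $l$, equivalently a reducible mod-$l$ representation — then $l$ lies in the finite set $\{2,3,5,7,11,13,17,19,37,43,67,163\}$. Granting this, case (a) is immediate: the hypotheses $l\ge 23$ and $l\notin\{37,43,67,163\}$ place $l$ outside that set, so no such isogeny exists.

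For cases (b) and (c) we are reduced to $l\in\{11,13,17,19,37,43,67,163\}$, and here I would use the finer information about the elliptic curves attached to the non-cuspidal rational points of $X_0(l)$. For (b) I would invoke the semistable part of Mazur's work directly: a semistable elliptic curve over $\mathbb{Q}$ carrying a rational subgroup of prime order $l$ must have $l\in\{2,3,5,7\}$, so $l\ge 11$ is impossible for semistable $E$. If one prefers a self-contained argument, one can run the classical isogeny-character argument: the character $\chi\colon G_{\mathbb{Q}}\to\mathbb{F}_l^{\times}$ giving the action on the order-$l$ subgroup is, by semistability of $E$, unramified outside $l$ and at worst tamely ramified at $l$, and together with $\chi\cdot\chi'=\bar{\chi}_{\mathrm{cyc}}$ and the local description of $E[l]$ at the bad primes this pins $\chi$ down enough to contradict the existence of the corresponding point on $X_0(l)$ for $l\ge 11$.

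For (c) I would use that, for each $l\in\{11,17,19,37,43,67,163\}$, the set of non-cuspidal points of $X_0(l)(\mathbb{Q})$ is finite and the $j$-invariants of the associated curves are explicitly known: the CM values ($j=-2^{15}$ for $l=11$, $j=-2^{15}3^{3}$ for $l=19$, and $j=-960^{3},-5280^{3},-640320^{3}$ for $l=43,67,163$, all integral), together with the finitely many sporadic non-CM values occurring for $l=11,17,37$ (e.g. $j=-7\cdot 11^{3}$ for $l=37$). The key computational fact is that in every one of these cases the denominator of $j$ is a power of $2$ — equal to $1$ for the integral entries, and to $2^{k}$ for the sporadic $l=11,17$ entries — equivalently, every such curve has potentially good reduction at every odd prime. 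Hence if the denominator of $j(E)$ has an odd prime factor, then $j(E)$ is none of these values, so $E$ admits no rational $l$-isogeny. The exponent $l=13$ is excluded from hypothesis (c) precisely because $X_0(13)$ has genus $0$: there are infinitely many curves with a rational $13$-isogeny, among them curves whose $j$-invariant has an odd prime in its denominator; but $l=13$ is still covered by (b).

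The substance of the argument is organizational rather than conceptual — it amounts to quoting Mazur's theorem together with its semistable refinement and assembling the explicit finite list of $j$-invariants of $l$-isogeny curves for the remaining $l\in\{11,17,19,37,43,67,163\}$. The one step I would treat as the main obstacle is making sure this list is complete and that each entry has denominator a power of $2$, in particular correctly handling the sporadic non-CM curves at $l=11,17$ (whose $j$-invariants have denominators $2^{k}$ rather than $1$) and confirming that $l=13$ genuinely must be removed from hypothesis (c) while remaining within reach of hypothesis (b).
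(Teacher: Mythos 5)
Your argument is correct and matches the paper's: the paper disposes of this proposition by citing Mazur (1978) \emph{Rational Isogenies of Prime Degree}, using Theorem 1 (the prime-degree isogeny classification) for case (a), Theorem 4 (the semistable refinement) for case (b), and Corollary 4.4 (the explicit $j$-invariants and potentially-good-reduction statement for $l\in\{11,17,19,37,43,67,163\}$) for case (c), which is precisely what you invoke. One tiny bookkeeping slip: the two non-CM curves at $l=11$ (with $j=-11^2$ and $j=-11\cdot131^3$) have integral $j$-invariant, so only the two $l=17$ curves contribute a nontrivial power of $2$ in the denominator, but this does not affect the conclusion.
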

\begin{proof}
The proposition follows immediately from \cite{Mazur1978RationalIO}, Theorem 1, Theorem 4 and Corollary 4.4.
\end{proof}

\begin{cor}\label{2-cor: mod-l representation}
Let $E$ be an elliptic curve defined over $\Q$, $l$ be a prime number. 
Let $p$ be a prime number, such that $v_p(j(E)) < 0$ and $l\nmid v_p(j(E))$.
Suppose that (a)  $l\ge 23$ and $l\neq 37,43,67,163$; or (b) $E$ is semi-stable and $l\ge 11$; or (c) $l\ge 11$, $l\neq 13$ and the denominator of $j(E)$ is not a power of $2$.
Then the the mod-$l$ Galois representation $\rho=\rho_{E,l}:G_{\Q}\to\GL(2,\F_l)$ associated to $E$ is a surjection.
\end{cor}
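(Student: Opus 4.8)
The plan is to combine the classification of subgroups of $\GL(2,\F_l)$ (Dickson) with the production of a nontrivial unipotent element — a transvection — in the image of $\rho=\rho_{E,l}$, the transvection coming from the potentially multiplicative reduction at $p$.

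First I record two structural facts. The composite $\det\circ\rho$ is the mod-$l$ cyclotomic character, which is surjective onto $\F_l^\times$ by the Weil pairing; hence $\det\big(\mathrm{Im}(\rho)\big)=\F_l^\times$, so it will suffice to prove $\SL(2,\F_l)\subseteq\mathrm{Im}(\rho)$, as then $|\mathrm{Im}(\rho)| = |\SL(2,\F_l)|\cdot(l-1) = |\GL(2,\F_l)|$. Next, by Proposition \ref{2-prop: rational isogeny}, under each of the hypotheses (a), (b), (c) the curve $E$ admits no $\Q$-rational isogeny of degree $l$; equivalently, $E[l]$ contains no $G_\Q$-stable line (such a line would give a $\Q$-rational subgroup scheme of order $l$ and hence a degree-$l$ isogeny over $\Q$), so $\rho$ is irreducible and $\mathrm{Im}(\rho)$ is not contained in any Borel subgroup.

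The key step is the transvection. Since $v_p(j(E))<0$, the curve $E$ has potentially multiplicative reduction at $p$, so over an at-most-quadratic extension $M/\Q_p$ the base change $E_M$ is a Tate curve with parameter $q$ satisfying $v_M(q)=-v_M(j(E))$ (Proposition \ref{2-prop: torsion point field} (iv)); because $l$ is odd and $l\nmid v_p(j(E))$, this gives $l\nmid v_M(q)$. Applying Proposition \ref{2-prop: torsion point field} (iv) to $E_M$ then shows that the ramification index of $M(E[l])/M$ is divisible by $l$, and — again using that $l$ is odd — that the ramification index of $\Q_p(E[l])/\Q_p$, hence the order of $\rho(I_p)$ for $I_p$ an inertia subgroup at $p$, is divisible by $l$. (When $E$ is already semi-stable at $p$, e.g.\ under hypothesis (b), no twist is needed and this is immediate; the residual sub-case $l=p$ with $M/\Q_p$ ramified is handled the same way using the wild inertia of the Tate curve, which surjects onto a unipotent subgroup of order $l$.) Thus $\mathrm{Im}(\rho)$ contains an element $t$ of order $l$; since $\mathrm{char}\,\F_l=l$, the only $l$-th root of unity in $\overline{\F_l}$ is $1$, so $t$ is unipotent, and being a non-scalar unipotent element of $\GL(2,\F_l)$ it is conjugate to $\left(\begin{smallmatrix}1&1\\0&1\end{smallmatrix}\right)$.

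Finally I run the classification. For $l\ge 11>5$, a subgroup $G\subseteq\GL(2,\F_l)$ with $\det G=\F_l^\times$ either contains $\SL(2,\F_l)$, or lies in a Borel subgroup, or lies in the normalizer of a (split or non-split) Cartan subgroup, or has image in $\mathrm{PGL}(2,\F_l)$ isomorphic to $A_4$, $S_4$ or $A_5$. The Borel case is excluded by the irreducibility above. The Cartan-normalizer case is excluded because such a normalizer has order $2(l-1)^2$ or $2(l^2-1)$, both prime to $l$, hence contains no element of order $l$, whereas $t$ has order $l$. The exceptional case is excluded because the image of $t$ in $\mathrm{PGL}(2,\F_l)$ still has order $l$ ($t^k$ is scalar only when $l\mid k$), while $|A_4|=12$, $|S_4|=24$, $|A_5|=60$ are all prime to $l\ge 11$. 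Therefore $\SL(2,\F_l)\subseteq\mathrm{Im}(\rho)$, and combined with $\det(\mathrm{Im}\,\rho)=\F_l^\times$ this yields $\mathrm{Im}(\rho)=\GL(2,\F_l)$, i.e.\ $\rho_{E,l}$ is surjective. The step I expect to be the main obstacle is the transvection argument — specifically making the passage from potentially multiplicative to multiplicative reduction uniform across the possibly ramified quadratic twist and across the cases $l\ne p$ and $l=p$ — whereas everything after that is a routine application of the known subgroup structure of $\GL(2,\F_l)$.
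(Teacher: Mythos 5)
Your proof is correct and follows essentially the same route as the paper: reduce to showing $\SL(2,\F_l)\subseteq\mathrm{Im}(\rho)$ via surjectivity of the mod-$l$ cyclotomic determinant, produce a transvection from the potentially multiplicative reduction at $p$ using Tate curve theory and the hypothesis $l\nmid v_p(j(E))$, exclude containment in a Borel via Proposition \ref{2-prop: rational isogeny} (no $\Q$-rational $l$-isogeny), and conclude $\SL(2,\F_l)\subseteq\mathrm{Im}(\rho)$. The only difference is that the paper black-boxes the final step by citing \cite{GenEll}, Lemma 3.1 (iii) (a subgroup of $\GL(2,\F_l)$ containing a transvection and not contained in a Borel contains $\SL(2,\F_l)$), whereas you unfold that lemma by running Dickson's classification of subgroups of $\GL(2,\F_l)$ and checking that the transvection of projective order $l$ rules out the Cartan-normalizer and exceptional cases; this is exactly the standard proof of the cited lemma, so the substance is the same.
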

\begin{proof}
Write $H$ for the image $\rho(G_{\Q}) \subseteq \GL(2,\F_l)$. By taking into account that the determinant of the image of $\rho$ is $\F_l^\times$, we only need to prove that $\SL(2,\F_l)\subseteq H$.

Since $v_p(j(E)) < 0$, $E$ has potentially bad multiplicative reduction at $p$.
Hence there exists a $p$-adic local field $k$, such that $[k:\Q_p]\mid 2$, and $E_k\defeq E\times_{\Q} k$ has bad multiplicative reduction. Since $l\nmid v_p(j(E))$, it follows from the discussion of the local theory preceding \cite{GenEll}, Lemma 3.2 that the image $H$ contains [up to conjugacy] the element $\alpha=\big( \begin{smallmatrix}1&1\\0&1 \end{smallmatrix} \big) \in\GL(2,\F_l)$.

Since (a) or (b) or (c) is true, by Lemma \ref{2-prop: rational isogeny}, $E$ doesn't admit a $\Q$-rational isogeny of degree $l$. 
Hence $H$ is not contained in the Borel subgroup  
$\big( \begin{smallmatrix}*&*\\0&* \end{smallmatrix} \big) \subseteq\GL(2,\F_l)$, so $H$ contains at least one matrix that is not upper triangular. Then by \cite{GenEll}, Lemma 3.1 (iii), we have $\SL(2,\F_l)\subseteq H$.
\end{proof}

Now we shall start the construction of $\mu_6$-initial $\Theta$-data.
\begin{prop} \label{2-prop: construction of mu_6 initial Theta-data}
Let $E$ be an elliptic curve defined over $\Q$ with $j$-invarient $j(E) \in \Q$;
$N$ be the denominator of $j(E)$;
$F$ be a number field which is Galois over $\Q$;
$l\ge 5$ be a prime number such that $l\nmid [F:\Q]$;
$X_F$ be the punctured elliptic curve associated to $E_F \defeq E \times_{\Q} F$.
Suppose that:
\begin{itemize}
\item[(a)] $\sqrt{-1}\in F$, $F(E[6]) = F$, $E_F$ is semi-stable, and $F \subseteq \Q(E[n])$ for some positive integer $l\nmid n$. 
\item[(b)] $j(E)\notin \{0,2^6\cdot 3^3,2^2\cdot 73^3\cdot 3^{-4},2^{14}\cdot 31^3\cdot 5^{-3}\}$.
\item[(c)] We have $l\ge 23$ and $l\neq 37,43,67,163$; or $E$ is semi-stable and $l\ge 11$; or $l\ge 11$, $l\neq 13$ and $N$ is not a power of $2$.
\item[(d)] We have $N'_l \neq 1$, where $N'_l \defeq \prod_{p: p\neq l, l\nmid v_p(N)} p^{v_p(N)}$.
\end{itemize} 
Then there exists a $\mu_6$-initial $\Theta$-data $$\mathfrak D(E,F,l,\mu_6) = (\overline{F}/F,X_F,l,\underline{C}_K, \uV,\V_{\tmod}^{\bad}, \underline{\epsilon}),$$ 
which is of type $(l, N, N'_l)$ [cf. the notation in Definition \ref{1-def: ramification dataset for initial theta-data}, (i)].
\end{prop}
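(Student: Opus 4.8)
The plan is to verify, one hypothesis at a time, that conditions (a)--(e) of Lemma \ref{1-lem: initial Theta-data} hold for the triple $(\overline{F}/F,X_F,l)$ once we make the correct choices of $F_{\tmod}$ and $\V_{\tmod}^{\bad}$, then to invoke that lemma to produce the $\mu_6$-initial $\Theta$-data, and finally to compute $\log(\mathfrak q)$ in order to read off that the datum is of type $(l,N,N'_l)$.

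First I would fix the two pieces of data not handed to us directly. Since $j(E_F)=j(E)\in\Q$ and the $3$-torsion (indeed the full $6$-torsion) of $E$ is defined over $F$, Proposition \ref{2-prop: torsion point field} (v) identifies the field of moduli of $X_F$ with $\Q(j(E))=\Q$, so I take $F_{\tmod}=\Q$; then the requirement in (e) that $F/F_{\tmod}$ be Galois of degree prime to $l$ is exactly the hypothesis that $F/\Q$ is Galois with $l\nmid[F:\Q]$. For the bad locus I set
\[
\V_{\tmod}^{\bad}\defeq\{\,v_p\in\V_{\Q}^{\non}\ :\ p\ne l,\ p\mid N,\ l\nmid v_p(N)\,\},
\]
which is nonempty by hypothesis (d), so $\V(F)^{\bad}\ne\emptyset$ as (c) demands; and for each such $p$ and each $v\mid p$ in $F$ one has $v(j(E_F))=e(v/p)\cdot v_p(j(E))<0$, so semi-stability of $E_F$ forces bad multiplicative reduction at $v$, giving the rest of (c). Conditions (a) and (b) of the lemma are then just the hypotheses $\sqrt{-1}\in F$ and ``$E_F$ semi-stable, $F(E_F[6])=F$'' from (a) of the proposition.

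The substantive point is the Galois-image condition in (d). By hypothesis (c) of the proposition and the existence (from (d)) of a prime $p\mid N$ with $p\ne l$ and $l\nmid v_p(N)=-v_p(j(E))$, Corollary \ref{2-cor: mod-l representation} shows that $\rho_{E,l}\colon G_{\Q}\to\GL(2,\F_l)$ is surjective. Using $F\subseteq\Q(E[n])$ with $\gcd(n,l)=1$ from hypothesis (a), Proposition \ref{2-prop: torsion point field} (i) gives $\Q(E[n])\cap\Q(E[l])=\Q$ and $\Q(E[nl])=\Q(E[n],E[l])$, whence the restriction of $\rho_{E,l}$ to $G_{\Q(E[n])}$ already surjects onto $\Gal(\Q(E[l])/\Q)=\GL(2,\F_l)$; a fortiori the image of $\rho_{E_F,l}$, which is the restriction of $\rho_{E,l}$ to $G_F$, contains $\SL(2,\F_l)$. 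The remaining assertions of (d) are immediate: $K=F(E_F[l])$ is finite Galois over $F$ by Proposition \ref{2-prop: torsion point field} (i), and $X_K$ admits a $K$-core by Proposition \ref{2-prop: elliptic curves admitting cores}, since $j(E_K)=j(E)$ avoids the four forbidden values by hypothesis (b). For (e), any $v\in\V(F)^{\bad}$ lies over some $p\ne l$, and $l\nmid v_p(N)$ together with $l\nmid[F:\Q]$ forces $l$ to be prime to the order $e(v/p)\,v_p(N)$ of the $q$-parameter at $v$. Lemma \ref{1-lem: initial Theta-data} then supplies $\eta$, $\underline{C}_K$ and $\underline\epsilon$, and we set $\mathfrak D(E,F,l,\mu_6)\defeq(\overline{F}/F,X_F,l,\underline{C}_K,\uV,\V_{\tmod}^{\bad},\underline\epsilon)$.

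Finally I would identify the type. With $F_{\tmod}=\Q$, Definitions \ref{1-def:Log of arithmetic divisors} and \ref{1-def: ramification dataset for initial theta-data} give $\log(\mathfrak q)=\sum_p v_p(q_p)\log(p)$, where $v_p(q_p)=0$ off $\V_{\tmod}^{\bad}$ and, for $v_p\in\V_{\tmod}^{\bad}$, the Tate-curve relation $v(q_K)=-v(j(E))$ of Proposition \ref{2-prop: torsion point field} (iv), after accounting for the ramification index $e_v$ of $K_{\underline v}/\Q_p$, yields $v_p(q_p)=-v_p(j(E))=v_p(N)$; summing gives $\log(\mathfrak q)=\sum_{p\ne l,\,l\nmid v_p(N)}v_p(N)\log(p)=\log(N'_l)$, so the datum is of type $(l,N,N'_l)$ with $N'_l\in\Z_{\ge 1}$. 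I expect the main obstacle to be this last step: getting the normalizations exactly right so that $v_p(q_p)$ equals $v_p(N)$ (tracking the valuation of the $q$-parameter up the tower $\Q_p\subseteq F_v\subseteq K_{\underline v}$ and checking that no stray factor enters from the degree-$\le 2$ twist in Proposition \ref{2-prop: torsion point field} (iv)), and, more conceptually, being certain that the field of moduli of $X_F$ is genuinely $\Q$ rather than some larger subfield of $F$ --- which is precisely where rationality of the $3$-torsion over $F$ and Proposition \ref{2-prop: torsion point field} (v) are needed.
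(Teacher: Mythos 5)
Your proposal is correct and follows essentially the same route as the paper's proof: both set $F_{\tmod}=\Q$, take $\V_{\tmod}^{\bad}=\{v_p:\ p\ne l,\ l\nmid v_p(N)\}$, verify the hypotheses of Lemma \ref{1-lem: initial Theta-data} one by one (with the Galois-image point handled via Corollary \ref{2-cor: mod-l representation} together with $F\cap\Q(E[l])=\Q$ deduced from $F\subseteq\Q(E[n])$ and Proposition \ref{2-prop: torsion point field} (i), and the $K$-core via Proposition \ref{2-prop: elliptic curves admitting cores}), and then read off $\log(\mathfrak q)=\log(N'_l)$ from Definition \ref{1-def:Log of arithmetic divisors}. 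The small differences — invoking Proposition \ref{2-prop: torsion point field} (v) rather than the base-change observation to identify the field of moduli, and explicitly checking that semi-stability plus $v_p(j(E))<0$ gives bad multiplicative reduction — are merely more explicit versions of steps the paper leaves implicit.
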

\begin{proof}
The proof will be shown by making use of Lemma \ref{1-lem: initial Theta-data}.

(1) Write $F_{\tmod} \defeq \Q$, then $F$ is Galois over $F_{\tmod}$, $l \nmid [F:F_{\tmod}]$.
By (a), we have $\sqrt{-1}\in F$, $6$-torsion points of $E_F$ are rational over $F$, and $E_F$ is semi-stable.
Let $\overline{F}$ be an algebraic closure of $F$, write $G_F \defeq \Gal(\overline{F}/F)$ for the absolute Galois group of $F$.

(2) Let $X$ be the punctured elliptic curve defined over $\Q$ associated to $E$, then $X_F = X \times_{\Q} F$. Hence the field of moduli of $X_F$ is $\Q = F_{\tmod}$. 
By (a), the elliptic curve $E_F$ is semi-stable.
Write \begin{equation*}
\V_{\tmod} \defeq \V(F_{\tmod}) = \V_{\Q}, 
\; \V_{\tmod}^{\bad} \defeq \{v_p \in \V_{\Q}^{\non}: p\neq l,\; l \nmid v_p(N) \}.
\end{equation*}
Then since $N'_l \neq 1$ by (d), $\V_{\tmod}^{\bad}$ is a nonempty set of nonarchimedean valuations of $F_{\tmod}$.

(3) For each $v\in \V(F)$ lying over some $v_p \in \V_{\tmod}^{\bad}$, we have $p\neq l$ and $l\nmid v_p(N) = v_p(q_p)$, where $q_p$ is the $q$-parameter of $E$ at $v_p$.
Write $e'$ for the ramification idex of $F_v$ over $\Q_p$. Since $l\nmid [F:\Q]$, $e' \mid [F:\Q]$, we have $l\nmid e'$. Then since $l\nmid  v_p(N) = v_p(q_p)$, we have $l \nmid  e'\cdot v_p(q_p) = v(q_p)$.
Hence for each $v\in \V(F)^{\bad}\defeq \V_{\tmod}^{\bad}\times_{\V_{\tmod}} \V(F)$, $l$ is prime to the residue characteristics of $v$, as well as to the order of the $q$-parameter of $E_F$ at $v$.

(4) By Corollary \ref{2-cor: mod-l representation} and (c), the mod-$l$ Galois representation $\rho_{E,l}:G_{\Q}\to\GL(2,\F_l)$ associated to $E$ is a surjection. 
Note that since $F\subseteq \Q(E[n])$ and $l\nmid n$ by (a), we have $F\cap \Q(E[l]) = \Q$ by Lemma \ref{2-prop: torsion point field}, (i).
Hence the image of the mod-$l$ Galois representation $\rho_{E_F,l}$ [associated to $E_F$] equals the image of $\rho_{E,l}$ [which equals $\GL(2,\F_l)$],
thus $\rho_{E_F,l}$ is a surjection. 

(5) Write $K\defeq F(E_F[l])\subseteq \overline{F}$ for the $l$-torsion point field of $E_F$. Then since $F$ and $\Q(E[l])$ are Galois over $F_{\tmod} = \Q$, the field $K = F \cdot \Q(E[l])$ is a finite Galois extension of $F_{\tmod}$.
Write $X_K \defeq X_F\times_F K$, then by Proposition \ref{2-prop: elliptic curves admitting cores} and (b), $X_K$ admits a $K$-core.

Now by the above facts (1)--(5), the existence of the $\mu_6$-initial $\Theta$-data $\mathfrak D(E,F,l,\mu_6)$ follows from Lemma \ref{1-lem: initial Theta-data},
and $\log(\mathfrak q) = \log(N'_{l})$ follows easily from Definition \ref{1-def:Log of arithmetic divisors}.
\end{proof}

\subsection{The consturction for Frey-Hellegouarch curves}
Let $(a,b,c)$ be a triple of non-zero coprime integers such that $a+b=c.$
The Frey-Hellegouarch curve associated to $(a,b,c)$ is the elliptic curve defined over $\Q$ by the equation $$y^2=x(x-a)(x+b).$$
This curve is popularized in its application to Fermat's Last Theorem, where one investigates a (hypothetical) solution to the Fermat equation $x^l + y^l = z^l$.

\begin{lem} \label{2-lem: properties of the F-H curve}
Let $(a,b,c)$ be a triple of non-zero coprime integers such that $a+b=c$;
$E$ be the Frey-Hellegouarch curve associated to $(a,b,c)$, which is defined over $\Q$ by the equation $y^2=x(x-a)(x+b).$
Let $l\ge 5$ be a prime number.

(i) By Tate's algorithm, we have
\begin{equation*}
c_4(E)=16(a^2+ab+b^2), \; \Delta(E)=16a^2b^2c^2, 
\; j(E)=\frac{256(a^2+ab+b^2)^3}{a^2b^2c^2}\in\Q.
\end{equation*}

For each prime number $p\neq 2$, the Frey-Hellegouarch curve $E$ has good reduction at $v_p$ when $p \nmid abc$, and has multiplicative reduction at $v_p$ when $p\mid abc$.
Moreover, if $4\mid (a+1)$ and $16\mid b$, then $E$ is semi-stable.

(ii) Write $F\defeq \Q(\sqrt{-1},E[3])$, then $F$ is Galois over $\Q$, $F=F(E[6])$, $E_F$ is semi-stable, and $F\subseteq \Q(E[12])$.
Write $$N \defeq \frac{|abc|}{\gcd(16,abc)}, \; N'_l \defeq \prod_{p: p\neq l, l\nmid v_p(N)} p^{v_p(N)}.$$
Then $N^2$ is the denominator of $j(E)$, $v_2(N) = \max\{v_2(abc)-4,0\}$, and we have $v_p(N)=v_p(abc)$ for each prime number $p\ge 3$.
Write $X_F$ for the punctured elliptic curve associated to $E_F\defeq E\times_F F$.

(iii) Suppose that $N'_l \neq 1$ and $(|a|,|b|,|c|)$ is not a permutation of $(1,1,2)$, $(1,8,9)$. 
Also, suppose that (a) $l\ge 11$ and $l\neq 13$; 
or (b) $l\ge 11$, $4\mid (a+1)$ and $16\mid b$.

Then by Proposition \ref{2-prop: construction of mu_6 initial Theta-data}, there exists a $\mu_6$-initial $\Theta$-data 
$$\mathfrak D(E,F,l,\mu_6) = (\overline{F}/F,X_F,l,\underline{C}_K, \uV,\V_{\tmod}^{\bad}, \underline{\epsilon}),$$ which is of type $(l, N^2, (N'_l)^2)$.

(iv) In the situation of (iii), proceeding with the notation in \S1.
For each $v_p\in \V_{\tmod}^{\non}$, write $e_p$ for the ramification index of $K_{\eta(v)}$ over $\Q_p$, $d_p$ for the different index of $K_{\eta(v)}$.
Then:
\begin{itemize}
\item When $p\mid abc$ and $p\neq 2$, $E$ has multiplicative reduction at $v_p$.
In this case, if $p\neq 3,l$, then $e_p = 6l/\gcd(v_p(N^2),6l)=3l/\gcd(v_p(abc),3l)$ divides $3l$, $p\nmid e_p$, $d_p=1-\frac{1}{e_p} \le 1-\frac{1}{3l}$;
if $p \in \{3, l\}$, then $e_p = 3l(p-1)/\gcd(v_p(abc),3l)$, hence 
$e_3\in\{2, 6, 2l, 6l\}$, $e_l \in \{l-1, 3(l-1), l(l-1), 3l(l-1)\}$, $d_3 \le 2$, $d_l \le 2$.

\item When $p\mid abc$ and $p\neq 2$, $E$ has good reduction at $v_p$.
In this case, if $p\neq 3,l$, then $e_p = 1$, $d_p = 0$; if $p \in \{3, l\}$, then $e_3 \in \{2,6,8\}$, $e_l \in \{l-1, l(l-1), l^2-1\}$, $d_3 \le 2$, $d_l \le 2$.

\item Note that $v_2(abc)\ge 1$. When $v_2(abc)\ge 5$, $E' \defeq E\times_{\Q} \Q(\sqrt{-1})$ has multiplicative reduction at the unique valuation of $\Q(\sqrt{-1})$ lying over $v_2$, then $e_2 = 6l/\gcd(v_2(abc)-4,3l)$, $e_2\in\{2,6,2l,6l\}$, $d_2 \le 2$;
when $v_2(abc) = 4$, $E$ has good reduction at $v_2$, then $e_2 = 2$, $d_2\le 2$;
when $1\le v_2(abc)\le 3$, $E$ has potentially good reduction at $v_2$, we have $2\mid e_2$, $e_2\mid 48$ and $d_2\le 1+v_2(e_2)\le 5$.
\end{itemize}

\end{lem}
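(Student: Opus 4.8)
The plan is to compute, for every prime $p$, the ramification index $e_p$ of $K_{\eta(v_p)}$ over $\Q_p$ and then to read off the bound on $d_p$ from Lemma~\ref{1-lem: Estimates of Differents}. First I would note that, by part~(ii) and Proposition~\ref{2-prop: torsion point field}~(i), one has $K = F(E_F[l]) = \Q(\sqrt{-1},E[3],E[l]) = \Q(\sqrt{-1},E[3l])$, a field Galois over $\Q$; in particular all places of $K$ above $p$ are conjugate, so $e_p$ and $d_p$ are independent of the chosen $\eta$, and Lemma~\ref{1-lem: Estimates of Differents}~(iii) always gives $d_p \le 1 + v_p(e_p)$. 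For odd $p$ the extension $\Q_p(\sqrt{-1})/\Q_p$ is unramified, so $e_p = e(\Q_p(E[3l])/\Q_p)$, and the task reduces to analysing the ramification of $\Q_p(E[3])\cdot\Q_p(E[l])$ over $\Q_p$ by means of Proposition~\ref{2-prop: torsion point field}~(iii) and~(iv), using part~(i) to decide between good and multiplicative reduction.

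For odd $p$ with $p\nmid abc$, part~(i) gives good reduction at $v_p$. If $p\neq 3,l$ then $\Q_p(E[3])/\Q_p$ and $\Q_p(E[l])/\Q_p$ are both unramified by Proposition~\ref{2-prop: torsion point field}~(iii), so $e_p = 1$ and $d_p = 0$. If $p\in\{3,l\}$ the same proposition bounds the ramification of $\Q_p(E[p])/\Q_p$ by the stated sets while $\Q_p(E[m])/\Q_p$ (with $m$ the other of $3,l$) stays unramified, so $e_3\in\{2,6,8\}$, $e_l\in\{l-1,l(l-1),l^2-1\}$, and since each such value has $p$-adic valuation at most $1$, $d_p\le 2$. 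For odd $p$ with $p\mid abc$, coprimality and $a+b=c$ force $p$ to divide exactly one of $a,b,c$ and to be prime to $a^2+ab+b^2$, so by part~(i) $v_p(j(E)) = -2v_p(abc)$ and $E$ has multiplicative reduction at $v_p$ with $q$-parameter of valuation $2v_p(abc)$. Proposition~\ref{2-prop: torsion point field}~(iv) then gives, for $p\neq 3,l$ (so $p\nmid 3l$), that $\Q_p(E[3l])/\Q_p$ is tamely ramified with $e_p = 3l/\gcd(2v_p(abc),3l) = 3l/\gcd(v_p(abc),3l)$, whence $p\nmid e_p$ and $d_p = 1-1/e_p\le 1-\tfrac{1}{3l}$ by Lemma~\ref{1-lem: Estimates of Differents}~(iii); and for $p\in\{3,l\}$, where $v_p(3l)=1$ and $\Q_p/\Q_p$ is unramified, that $e_p = 3l(p-1)/\gcd(v_p(abc),3l)$, which over the four possible values $\{1,3,l,3l\}$ of the gcd yields precisely the listed sets, with $d_p\le 1+v_p(e_p)\le 2$.

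For $p=2$, coprimality and $a+b=c$ give $v_2(abc)\ge 1$, and since $a^2+ab+b^2$ is odd the formulas of part~(i) give $v_2(j(E)) = 8-2v_2(abc)$. If $v_2(abc)\ge 5$ then $v_2(j(E))<0$ and $E$ is potentially multiplicative at $2$; using the semi-stability of $E_F$ from part~(ii) together with a Tate's-algorithm computation I would show that $E' = E\times_{\Q}\Q(\sqrt{-1})$ has multiplicative reduction at the place over $2$, with $q$-parameter of valuation $-2v_2(j(E)) = 4(v_2(abc)-4) = 4v_2(N)$; Proposition~\ref{2-prop: torsion point field}~(iv) applied over $\Q_2(\sqrt{-1})$ (with $n=3l$, $2\nmid 3l$) then gives $e(\Q_2(\sqrt{-1})(E[3l])/\Q_2(\sqrt{-1})) = 3l/\gcd(v_2(N),3l)$, so $e_2 = 6l/\gcd(v_2(abc)-4,3l)\in\{2,6,2l,6l\}$ and $d_2\le 1+v_2(e_2) = 2$. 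If $v_2(abc)=4$ then $v_2(j(E))=0$ and the same computation shows $E'$ has good reduction over $\Q_2(\sqrt{-1})$, so $\Q_2(\sqrt{-1})(E[3l])/\Q_2(\sqrt{-1})$ is unramified by Proposition~\ref{2-prop: torsion point field}~(iii) and $e_2 = 2$, $d_2\le 2$. If $1\le v_2(abc)\le 3$ then $v_2(j(E))>0$ and $E$ is potentially good at $2$; here $2\mid e_2$ since $\Q_2(\sqrt{-1})/\Q_2$ is ramified, and for $e_2\mid 48$ I would use that the Weil pairing identifies $\det\rho_{E,3}$ with the mod-$3$ cyclotomic character, so $\Gal(\Q_2(E[3])/\Q_2(\mu_3))\hookrightarrow\SL(2,\F_3)$, a group of order $24$; since $\Q_2(\mu_3)/\Q_2$ is unramified and adjoining $\sqrt{-1}$ at most doubles the degree, $\Q_2(\sqrt{-1},E[3])/\Q_2$ has ramification index dividing $48$, while $E_F$ being semi-stable and $E$ being potentially good force $E$ to have good reduction over $\Q_2(\sqrt{-1},E[3])$, so $\Q_2(\sqrt{-1},E[3l])/\Q_2(\sqrt{-1},E[3])$ is unramified by Proposition~\ref{2-prop: torsion point field}~(iii); hence $e_2\mid 48$, $v_2(e_2)\le 4$, and $d_2\le 1+v_2(e_2)\le 5$.

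The step I expect to be the main obstacle is the $p=2$ analysis: while the odd-prime cases are essentially bookkeeping once Proposition~\ref{2-prop: torsion point field} and Lemma~\ref{1-lem: Estimates of Differents} are available, at $p=2$ one must pin down the exact reduction type of the Frey--Hellegouarch curve and of its base change to $\Q_2(\sqrt{-1})$, together with the precise valuation of the associated $q$-parameter, across the various $2$-adic configurations of $(a,b,c)$. This requires running Tate's algorithm explicitly on $y^2 = x(x-a)(x+b)$ and tracking minimal models over $\Q_2$ and over $\Q_2(\sqrt{-1})$; it is precisely here that the semi-stability of $E_F$ recorded in part~(ii) is used in an essential way.
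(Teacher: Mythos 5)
Your proposal treats only part (iv), but that is the technical heart of the lemma, and your case-by-case computation of $e_p$ via Proposition~\ref{2-prop: torsion point field} (parts (iii), (iv)) together with the bound $d_p \le 1 + v_p(e_p)$ from Lemma~\ref{1-lem: Estimates of Differents} is exactly the paper's method; the paper only writes out the case $p=2$, $1\le v_2(abc)\le 3$ as a worked example, whereas you detail all cases, including the correct use of the semi-stability of $E_F$ to force the reduction type of $E$ over $\Q_2(\sqrt{-1})$ when $v_2(abc)\ge 4$. Parts (i)--(iii), which you take as given, do follow directly from Tate's algorithm and Propositions~\ref{2-prop: torsion point field}, \ref{2-prop: elliptic curves admitting cores}, \ref{2-prop: construction of mu_6 initial Theta-data}, but for a complete proof of the lemma you should still record these reductions explicitly.
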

\begin{proof}
For assertion (i), the case $p\neq 2$ follows from Tate's algorithm.
When $4\mid (a+1)$ and $16\mid b$,
by the change of variables $x\mapsto 4x$, $y\mapsto 8y+4x$, $E$ can be defined by the new equation 
\begin{align*}
E':y^2+xy=x^3+\frac{b-a-1}{4}\cdot x^2-\frac{ab}{16}\cdot x.
\end{align*}
By Tate's algorithm, we can show that 
\begin{align*}
c_4(E')=a^2+ab+b^2,\quad \Delta(E')=2^{-8}a^2b^2c^2,\quad j(E') = \frac{(a^2+ab+b^2)^3}{2^{-8}a^2b^2c^2}.
\end{align*}
Then since $\gcd(c_4(E'),\Delta(E')) = 1$, $E'$ is semi-stable, so $E$ is semi-stable.

Assertion (ii) is a consequence of Proposition \ref{2-prop: torsion point field} and Proposition \ref{2-prop: elliptic curves admitting cores}.

For assersion (iii), since $(|a|,|b|,|c|)$ is not a permutation of $(1,1,2)$, $(1,8,9)$, we have $j(E)\notin \{0,2^6\cdot 3^3,2^2\cdot 73^3\cdot 3^{-4},2^{14}\cdot 31^3\cdot 5^{-3}\}$, and $N$ is not a power of $2$.
Then assersion (iii) follows easily from Proposition \ref{2-prop: construction of mu_6 initial Theta-data}.

Assertion (iv) can be proved via Lemma \ref{1-lem: Estimates of Differents}, Proposition \ref{2-prop: torsion point field} and assertions (i), (ii). 
We shall prove the case of $p=2$, $1\le v_2(abc) \le 3$ as an example.
In this case, $v_2(N) = 0$, thus $E$ has potentially good reduction at $v_2$.
Since $\sqrt{-1}\in F \subseteq K$, we have $2\mid e_2$.
Let $v'\in \V(F)$ be the image of $\uv \defeq \eta(v_2)$ via the surjection $\V(K)\twoheadrightarrow \V(F)$.
By Proposition \ref{2-prop: torsion point field}, $E_F$ has good reduction at $v'$, thus the extension $K_{\uv}/F_{v'}$ is unramified.
Then since $\Q\subseteq \Q(\mu_3) \subseteq F = \Q(\sqrt{-1}, E[3])$ and $\Q(\mu_3)/\Q$ is unramified at $v_2$,
we have $e_2 \mid [F:\Q(\mu_3)] \mid 48$, and hence $d_2\le 1+v_2(e_2) \le 5$ by Lemma \ref{1-lem: Estimates of Differents}.
\end{proof}

\begin{prop} \label{2-prop: local abc-inequalities}
Let $(a,b,c)$ be a triple of non-zero coprime integers such that $a+b=c$, $l\ge 11$ be a prime number. Write $N \defeq |abc| / \gcd(16,abc)$.
Suppose that (a) $l\neq 13$; 
or (b) $16\mid abc$.

Let $\mathfrak{R}_l$ be the ramification dataset consists of the following data:
\begin{gather*}
l_0 = l,\; e_0 = 3,\;  S_0=\{2,3,l\},\; S_{\gen}^{\multi} = \{1,3,l,3l\},\;
S_2^{\good} = \{e: e\mid 48, 2\mid e\}, \\
S_3^{\good} = \{2,6,8\},\; S_l^{\good} = \{l-1, l(l-1), l^2-1\},\;
S_2^{\multi} = \{2,6,2l,6l\}, \\
S_3^{\multi} = \{2,6,2l,6l\},\; S_l^{\multi} = \{l-1, 3(l-1), l(l-1), 3l(l-1)\}.
\end{gather*}
Then with $\Vol(\mathfrak{R}_l)$ defined in Corollary \ref{1-cor: The log-volume of ramification dataset}, we have
{\small
\begin{equation} \label{2-1-eq0}
\begin{aligned}
\log(N) \le (3+\frac{11l+31}{l^2+l-12}) \cdot \log\rad(N) + 3\Vol(\mathfrak{R}_l) 
+ \sum_{p:\, p=l \,\text{or}\, l\mid v_p(N)} v_p(N)\cdot\log(p).
\end{aligned}
\end{equation}}

\end{prop}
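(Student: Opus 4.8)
The plan is to run the Frey--Hellegouarch curve of $(a,b,c)$ through the IUT estimate packaged in Corollary~\ref{1-cor: The log-volume of ramification dataset}, using the explicit ramification dataset $\mathfrak R_l$, and then to re-arrange the output into the asserted form. Before that, I would dispose of degenerate configurations. If $N=1$ --- equivalently $(|a|,|b|,|c|)$ is a permutation of $(1,1,2)$ --- then $\log N=0$ while the right-hand side is $\ge 3\Vol(\mathfrak R_l)>0$, since $S_2^{\good},S_3^{\good},S_l^{\good}$ are nonempty so $\Vol(\mathfrak R_l)>\log\pi$ by Remark~\ref{1-rmk: The log-volume of ramification dataset}. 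If $N'_l=1$ then $N=\prod_{p\,:\,p=l\text{ or }l\mid v_p(N)}p^{v_p(N)}$, so $\log N$ is exactly the final sum on the right-hand side and the remaining terms are nonnegative. If $(|a|,|b|,|c|)$ is a permutation of $(1,8,9)$ then $N=9$ and $\rad(N)=3$, so $\log N=2\log 3<3\log 3\le 3\log\rad(N)$. In each of these cases the inequality holds, so henceforth I assume $N'_l\ne 1$ and $(|a|,|b|,|c|)$ is not a permutation of $(1,1,2)$ or $(1,8,9)$.

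In case (b) (i.e. $16\mid abc$) I would first replace $(a,b,c)$ by a triple obtained from it by a permutation of the unordered set $\{a,b,-c\}$ together with a global sign change; such operations leave $j(E)$, and hence $N$, $N_l$, $N'_l$, $\rad(N)$ and the whole claimed inequality, unchanged, and one of them produces a triple with $4\mid(a+1)$ and $16\mid b$ (this is only needed when $l=13$). Then hypothesis~(b) of Lemma~\ref{2-lem: properties of the F-H curve}(iii) is satisfied, while in case (a) hypothesis~(a) of that lemma holds directly since $l\ge 11$. Hence Lemma~\ref{2-lem: properties of the F-H curve}(iii) yields a $\mu_6$-initial $\Theta$-data $\mathfrak D$ attached to the Frey--Hellegouarch curve $E$ of $(a,b,c)$, of type $(l,N^2,(N'_l)^2)$; and comparing the case-by-case description of the ramification indices $e_p$ and different indices $d_p$ in Lemma~\ref{2-lem: properties of the F-H curve}(iv) with the sets $S_{\gen}^{\multi},S_2^{\good},S_2^{\multi},S_3^{\good},S_3^{\multi},S_l^{\good},S_l^{\multi}$ of $\mathfrak R_l$ (recalling $e_0=3$, $S_0=\{2,3,l\}$, and that $p\mid N\iff p\mid abc$ for $p\ge 3$) shows that $\mathfrak D$ admits $\mathfrak R_l$.

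Now Corollary~\ref{1-cor: The log-volume of ramification dataset}, applied to $\mathfrak D$ and $\mathfrak R_l$ with $e_0=3$, gives
\begin{equation*}
\tfrac16\log\big((N'_l)^2\big)\le \frac{l^2+5l}{l^2+l-12}\Big(\big(1-\tfrac1{3l}\big)\log\rad(N^2)-\tfrac13\big(1-\tfrac1l\big)\log\rad\big((N^2)_l\big)\Big)+\Vol(\mathfrak R_l).
\end{equation*}
Since $l$ is odd, $l\mid v_p(N^2)\iff l\mid v_p(N)$, so $(N^2)_l=N_l^2$, $\rad(N^2)=\rad(N)$, and $\rad\big((N^2)_l\big)=\rad(N_l)$. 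I would then multiply through by $3$, discard the nonpositive term $-\tfrac{l^2+5l}{l^2+l-12}\big(1-\tfrac1l\big)\log\rad(N_l)$, substitute $\log N'_l=\log N-\sum_{p\,:\,p=l\text{ or }l\mid v_p(N)}v_p(N)\log p$, and simplify the coefficient of $\log\rad(N)$ via
\[
\frac{3(l^2+5l)}{l^2+l-12}\Big(1-\frac1{3l}\Big)=\frac{(l+5)(3l-1)}{l^2+l-12}=\frac{3l^2+14l-5}{l^2+l-12}=3+\frac{11l+31}{l^2+l-12};
\]
this yields exactly $(\ref{2-1-eq0})$.

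I expect the main obstacle to be precisely the bookkeeping of the second step: verifying prime by prime that $\mathfrak D$ admits $\mathfrak R_l$ --- in particular, that the compatibility $d_p\le d_p(e_p)$ implicit in Corollary~\ref{1-cor: The log-volume of ramification dataset} holds at the wildly ramified primes $p=2$ and $p=3$ --- and confirming that the permutation/sign-change reduction in case (b) really preserves semi-stability and all the relevant invariants. The analytic substance, namely the IUT inequality, is entirely absorbed into Corollary~\ref{1-cor: The log-volume of ramification dataset}, so nothing deep remains beyond this verification.
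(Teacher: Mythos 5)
Your proposal is correct and follows essentially the same route as the paper's proof: dispose of the degenerate triples, reduce case (b) to $4\mid(a+1)$, $16\mid b$ by a permutation and sign change, invoke Lemma~\ref{2-lem: properties of the F-H curve}(iii)--(iv) to produce a $\mu_6$-initial $\Theta$-data admitting $\mathfrak R_l$, apply Corollary~\ref{1-cor: The log-volume of ramification dataset}, discard the nonpositive $\log\rad(N_l)$ term, and simplify the coefficient of $\log\rad(N)$. The only cosmetic difference is that the paper phrases the target as bounding $\tfrac13\log N'_l$ rather than $\log N$ and disposes of all degenerate cases at once via the single bound $\tfrac13\log N'_l < \log\pi < \Vol(\mathfrak R_l)$ from Remark~\ref{1-rmk: The log-volume of ramification dataset}.
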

\begin{proof}
Write $N'_l \defeq \prod_{p: p\neq l, l\nmid v_p(N)} p^{v_p(N)}.$
Then it suffices to show that
{\small
\begin{equation} \label{2-1-eq1}
\begin{aligned}
\frac{1}{3}\log(N'_l) \le (1+\frac{11l+31}{3l^2+3l-36}) \cdot \log\rad(N) + \Vol(\mathfrak{R}_l).
\end{aligned}
\end{equation}}

When $N'_l = 1$ or $(|a|,|b|,|c|)$ is a permutation of $(1,1,2)$, $(1,8,9)$, we have 
 $\frac{1}{3}\log(N'_l) < \log(3) < \log(\pi)$. Then (\ref{2-1-eq1}) follows from $\Vol(\mathfrak{R}_l) > \log(\pi) > \frac{1}{3}\log(N'_l)$ by Remark \ref{1-rmk: The log-volume of ramification dataset}. Hence we can assume that $N'_l \neq 1$ and $(|a|,|b|,|c|)$ is not a permutation of $(1,1,2)$, $(1,8,9)$.
 
 In the case (b) where $16\mid abc$, since (\ref{2-1-eq1}) is symmetric for $(a,b,c)$, we can assume that $16\mid b$. Then if $4\nmid a$, we can replace $(a,b,c)$ by $(-a,-b,-c)$, thus we can assume that $4\mid (a+1)$. Hence we can assume that (a) $l\neq 13$; or (b) $4\mid (a+1)$ and $16\mid b$.
 
Then by Lemma \ref{2-lem: properties of the F-H curve}, (iii), there exists a $\mu_6$-initial $\Theta$-data $\mathfrak D = \mathfrak D(E,F,l,\mu_6)$, which is of type $(l, N^2, (N'_l)^2)$.
 Moreover, by Lemma \ref{2-lem: properties of the F-H curve}, (iv) and Definition \ref{1-def: ramification dataset for initial theta-data}, $\mathfrak D$ admits $\mathfrak R_l$. Then (\ref{2-1-eq1}) follows directly from Corollary \ref{1-cor: The log-volume of ramification dataset}.
\end{proof}

\begin{tiny-remark} \label{2-rmk: remark of local abc-inequalities}
When we have $16\mid abc$ in Proposition \ref{2-prop: local abc-inequalities},
we can define the ramification dataset $\mathfrak{R}'_l$ by definding $S_2^{\good} = \{2\}$ instead in the defintion of $\mathfrak{R}_l$. Then similar to the proof of Proposition \ref{2-prop: local abc-inequalities}, 
we have
{\small
\begin{equation*} 
\begin{aligned}
\log(N) \le (3+\frac{11l+31}{l^2+l-12}) \cdot \log\rad(N) + 3 \Vol(\mathfrak{R}'_l)
+ \sum_{p:\, p=l \,\text{or}\, l\mid v_p(N)} v_p(N)\cdot\log(p).
\end{aligned}
\end{equation*}}

\end{tiny-remark}

\subsection{Effective partial abc inequalities}

We shall call (\ref{2-1-eq0}) the partial abc inequality. 
In this subsection,  we will prove an effective version of  (\ref{2-1-eq0}) by estimating the term $\Vol(\mathfrak R_l)$.
The following lemma in analytic number theory is needed, where the ``sum over $p$'' runs over prime numbers.

\begin{lem} \label{2-lem: new estimates of arithmetic functions}
(i) For real number $x>3$, put
{\small\begin{equation*}
\begin{aligned}
 f_1(x) &\defeq \sum_{p\le x}\frac{\log(p)}{p-1}+ (1+\frac{1}{x}) \cdot \sum_{p\le x}\log(p)
 -\frac{1}{x}\sum_{p\le x} p\cdot\log(p) + \sum_{p<\sqrt{x}+1 } \log(\frac{x} {p-1} )
 \\ f_2(x) &\defeq \frac{x^2+5x}{x^2+x-12}\cdot(f_1(3x) + \frac{10}{3}\cdot \log(x) + 9 ).
\end{aligned}
\end{equation*}}

Then for integer $n\ge 2 \cdot  10^5$, we have 
\begin{align*}
 f_2(n) < \frac{3}{2}\cdot n + 0.06 \cdot \frac{n}{\log(n)} .
\end{align*}

(ii) For real number $x\ge 1$, write 
$$S_x \defeq \{p\in\Primes: \sqrt{x/\log(2)}  < p \le \frac{2}{3}\sqrt{x \log(x)}\},$$
$$f_3(x) \defeq |S_x|,\quad f_4(x) = \sum_{p\in S_x }p.$$
Then for any real number $ x \ge e^{31}$, we have 
\begin{align*}        
 f_3(x)\ge \frac{4}{3}\cdot\frac{\sqrt{x}\cdot(\sqrt{\log(x)}-2.14) }{\log(x)+\log\log(x)},
 \quad f_4(x) < \frac{4}{9}x\cdot(1+\frac{4.33}{\log(x)}).
\end{align*}

\end{lem}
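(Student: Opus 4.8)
The plan is to treat both parts as exercises in explicit analytic number theory: rewrite $f_1,\dots,f_4$ in terms of the Chebyshev function $\theta(y)=\sum_{p\le y}\log p$, the prime-counting function $\pi(y)$, the Mertens sums $\sum_{p\le y}\frac{\log p}{p}$ and $\prod_{p\le y}(1-\tfrac1p)^{-1}$, apply known explicit estimates for these (of Rosser--Schoenfeld and Dusart type, together with the verified range $\theta(x)<x$), and then reduce everything to elementary inequalities that can be checked at the threshold values and propagated upward by monotonicity.

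For (i), I would handle $f_1(3n)$ term by term. Writing $\frac1{p-1}=\frac1p+\frac1{p(p-1)}$ reduces $\sum_{p\le y}\frac{\log p}{p-1}$ to Mertens' first theorem plus the convergent tail $\sum_p\frac{\log p}{p(p-1)}$, giving $\sum_{p\le y}\frac{\log p}{p-1}=\log y-\gamma+O(1/\log y)$. The middle two terms combine cleanly by Abel summation: $(1+\tfrac1y)\theta(y)-\tfrac1y\sum_{p\le y}p\log p=\tfrac1y\big(\theta(y)+\int_2^y\theta(t)\,dt\big)$, whose main term is $\tfrac y2+1$; this is the term where the sharpest bounds on $\theta$ are required, since its error is of the same order as the final slack $0.06\,n/\log n$. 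The tail sum is rewritten as $\sum_{p<\sqrt y+1}\log\frac y{p-1}=\pi(\sqrt y)\log y-\big(\theta(\sqrt y)-\log\prod_{p\le\sqrt y}(1-\tfrac1p)^{-1}\big)$ and estimated by the corresponding $\pi$, $\theta$ and Mertens bounds, giving a quantity of size $O(\sqrt y)$. Collecting these at $y=3n$, then using $\frac{n^2+5n}{n^2+n-12}\le 1+\frac{5}{n}$ and adding $\frac{10}{3}\log n+9$, reduces the claim to an elementary inequality of the form $(\text{explicit }O(\sqrt n)\text{ and }O(\log n)\text{ terms})<0.06\,\frac n{\log n}$ for integers $n\ge 2\cdot10^5$; since the quantities $\frac{\log n}{\sqrt n}$ and $\frac{(\log n)^2}{n}$ governing the two sides are eventually decreasing, it suffices to verify the inequality once at $n=2\cdot10^5$.

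For (ii), set $y_1\defeq\sqrt{x/\log 2}$ and $y_2\defeq\tfrac23\sqrt{x\log x}$, so that $f_3(x)=\pi(y_2)-\pi(y_1)$ and $f_4(x)\le\sum_{p\le y_2}p$. Since $\log\tfrac23<0$ one has $\log y_2\le\tfrac12(\log x+\log\log x)$, while $\log y_1=\tfrac12(\log x-\log\log 2)$ exactly; substituting these into a second-order lower bound for $\pi(y_2)$ (such as $\pi(y)\ge\frac{y}{\log y}(1+\frac1{\log y})$, the trivial $\pi(y)\ge y/\log y$ being not quite enough) and Dusart's upper bound $\pi(y)\le\frac y{\log y}(1+\frac{1.2762}{\log y})$ turns the asserted lower bound on $f_3$ into an elementary inequality in $\log x$ and $\log\log x$, valid for $x\ge e^{31}$ because $\frac{\log\log x}{\log x}$ is decreasing past the threshold. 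For $f_4$ I would bound $\sum_{p\le y_2}p$ either by a known explicit estimate for $\sum_{p\le y}p$ or by partial summation from the $\pi$-bounds, obtaining $\sum_{p\le y_2}p\le\frac{y_2^2}{2\log y_2}(1+O(1/\log y_2))$; substituting $y_2^2=\tfrac49 x\log x$ and the value of $\log y_2$ gives a bound of the form $\frac49 x\cdot\frac{\log x}{\log x+\log\log x+2\log(2/3)}(1+O(1/\log x))$, which is $<\frac49 x(1+\frac{4.33}{\log x})$ for $x\ge e^{31}$, again by one check at the threshold plus monotonicity.

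The main obstacle is purely quantitative: the slack at both thresholds is modest, so one must use the strongest explicit prime estimates available, make sure that all arguments $\sqrt y$, $3n$, $y_1$, $y_2$ lie inside their stated ranges of validity, and juggle the numerical constants so that every inequality closes with a little room. No idea beyond standard explicit analytic number theory is needed, but the bookkeeping is delicate — in particular the cancellation between the combined Chebyshev term and the tail sum in (i), and the interplay between $\pi(y_2)$ and $\pi(y_1)$ in (ii), leaves little margin for wasteful estimates.
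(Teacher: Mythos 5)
Your overall plan for both parts — reduce $f_1,\dots,f_4$ to $\theta$, $\pi$, and Mertens sums via partial (Abel/Stieltjes) summation and then plug in explicit Rosser--Schoenfeld/Dusart estimates — is the same as the paper's, and your algebraic rewrites (the identity $(1+\tfrac1y)\theta(y)-\tfrac1y\sum_{p\le y}p\log p=\tfrac1y\bigl(\theta(y)+\int_2^y\theta(t)\,dt\bigr)$ and the decomposition of the tail sum via $\pi(\sqrt y)\log y-\theta(\sqrt y)+\log\prod_{p\le\sqrt y}(1-\tfrac1p)^{-1}$) are correct. The genuine gap is in your claim at the end of (i) that ``it suffices to verify the inequality once at $n=2\cdot10^5$'' by monotonicity.

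The problem is quantitative and specific to the range $2\cdot10^5\le n\le 10^8$ or so. With the bounds you cite, the combined Chebyshev term is at best bounded by $\tfrac y2+1$ (using $\theta(t)\le t$), while the tail term $\sum_{p<\sqrt y+1}\log\frac{y}{p-1}$ is of size roughly $1.3\sqrt y$. At $y=3n=6\cdot10^5$ the tail is about $1075$, whereas the allowed slack $0.01865\,y/\log y\approx 841$ (the paper's intermediate target for $f_1$) is \emph{smaller} than the tail alone, so your estimate fails by several hundred at the threshold. The inequality is in fact true there, but only because the Chebyshev term sits noticeably \emph{below} $\tfrac y2+1$ — the term $\tfrac1y\int_2^y\bigl(t-\theta(t)\bigr)\,dt$ contributes a negative correction of order $\sqrt y$ which cancels most of the tail. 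Making that correction explicit requires an effective \emph{lower} bound for $t-\theta(t)$ (equivalently, a sharp one-sided upper bound on $\theta$ with a negative main error term), which is not among the ``standard'' RS/Dusart inequalities you list. Monotonicity of $\tfrac{\log n}{\sqrt n}$ and $\tfrac{(\log n)^2}{n}$ is only useful once the coefficients in front are uniform in $n$; the Dusart 1999 bound $|\theta(x)-x|\le 0.006788\,x/\log x$ used in the paper is valid only for $x\ge 2.89\cdot10^7$, and the paper's own analytic argument closes only for $x\ge 10\cdot2.89\cdot10^7$. For the range $2\cdot10^5\le n\le 10\cdot2.89\cdot10^7$, the paper resorts to a direct computational verification of $f_1(n)<\tfrac12 n+0.01865\,n/\log n$; your proposal omits this step entirely, and without it the argument does not close. (A minor point on (ii): contrary to your remark, the paper does use the trivial lower bound $\pi(y)\ge y/\log y$ for $\pi(y_2)$, and it suffices; also, dropping the $-\alpha\pi(\alpha)$ correction when bounding $f_4$ by $\sum_{p\le y_2}p$ costs roughly $1.4\,x/\log x$ of the available slack, which should still be checked numerically against the target $\tfrac49x\cdot\tfrac{4.33}{\log x}$.)
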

\begin{proof}
Write $\pi(x)\defeq \sum_{p\le x}1$ for the prime counting function and write $\vartheta(x)\defeq \sum_{p\le x}\log(p)$ for the first Chebyshev function.

First, we consider assertion (i). We shall use the Stieltjes integral.

By \cite{Dusart1999TheKP}, Section 4, we have
{\small
\begin{equation} \label{2-3-eq1}
	|\vartheta(x)-x| \le  \epsilon\cdot\frac{x}{\log(x)} \quad \text{for} \; x\ge A,
\end{equation}}
where $A= 2.89\cdot 10^7$, $\epsilon=0.006788$.
Then by (\ref{2-3-eq1}), for $x\ge A$  we have 
{\small
\begin{equation*}
\begin{aligned}
 &\sum_{A < p\le x} p\cdot\log(p) 
 = \int_{A}^{x} t\cdot d(\vartheta(t))
 = [t\cdot \vartheta(t)]\big|_{A}^{x} - \int_{A}^{x} \vartheta(t) dt
 \\ \ge & x\cdot\vartheta(x)-A\cdot\vartheta(A) 
 - \int_{A}^{x} (t+\epsilon\cdot\frac{\log(A)}{2\log(A)-1}\cdot\frac{2t\log(t)-t}{\log^2(t)}) dt
 \\ \ge & x\cdot\vartheta(x)-A\cdot\vartheta(A) 
 - \int_{A}^{x} (t+0.515\epsilon\cdot\frac{2t\log(t)-t}{\log^2(t)}) dt
 \\ \ge & (1-\frac{\epsilon}{\log(x)})x^2-(1+\frac{\epsilon}{\log(A)})A^2
 - [\frac{t^2}{2}+\frac{0.515 \epsilon t^2}{\log(t)}]\big|_A^x
 \\ \ge & (\frac{1}{2}-\frac{1.515 \epsilon}{\log(x)}) x^2
 - (\frac{1}{2}+\frac{ 0.485 \epsilon}{\log(A)}) A^2.
\end{aligned}
\end{equation*}}
For $x\ge 10 \cdot A$, the computation in \cite{code_of_zpzhou} shows that
{\small
\begin{equation*}
\sum_{p\le A} p\cdot\log(p) - (\frac{1}{2}+\frac{0.485 \epsilon}{\log(A)}) A^2 
> -\frac{0.008 \epsilon \cdot (10 \cdot A)^2}{\log(10 \cdot A)}
\ge -\frac{0.008 \epsilon x^2}{\log(x)}.
\end{equation*}}
Hence for $x\ge 10 \cdot A$ we have
{\small 
\begin{equation} \label{2-3-eq2}
\begin{aligned}
 \sum_{p\le x} p\cdot\log(p)         
 & = \sum_{A < p\le x} p\cdot\log(p) + \sum_{p\le A} p\cdot\log(p)
 \\ & \ge (\frac{1}{2}-\frac{1.523 \epsilon}{\log(x)}) x^2-\frac{0.008 \epsilon x^2}{\log(x)}
 = (\frac{1}{2}-\frac{1.523 \epsilon}{\log(x)}) x^2.
\end{aligned}
\end{equation}}

By \cite{Approximate_formulas_for_some_functions_of_prime_numbers}, Equation (3.24), we have  
\begin{align*}
  \sum_{p\le x}\frac{\log(p)}{p} < \log(x) \quad \text{for} \; x>1.
\end{align*}
Thus by the computation in \cite{code_of_zpzhou}, we have
{\small
\begin{equation} \label{2-3-eq3}
\begin{aligned}
 \sum_{p\le x} \frac{\log(p)}{p-1}      
 &= \sum_{p\le x} \frac{\log(p)}{p} + \sum_{p\le x} \frac{\log(p)}{p(p-1)}
 < \log(x) + \sum_{p\le A} \frac{\log(p)}{p(p-1)} + \sum_{n>A} \frac{\log(n)}{n(n-1)} 
 \\ &< \log(x) + \sum_{p\le A} \frac{\log(p)}{p(p-1)} + \frac{1}{\sqrt{A}} 
 < \log(x) + 0.8. 
\end{aligned}
\end{equation}}

By \cite{Approximate_formulas_for_some_functions_of_prime_numbers}, Equation (3.14), (3.29), for $x \ge 10\cdot A > \exp(19.48)$, 
we have $\vartheta(\sqrt{x}) > \sqrt{x}\cdot (1-\frac{1}{\log(x)} ) > 0.94 \sqrt{x}$, 
and $\sum_{p\le \sqrt{x}+1 } \log(\frac{p}{p-1}) \le \log( e^\gamma \log(\sqrt{x}+1)(1+\frac{1}{2\log(\sqrt{x}+1)} ) ) < \log\log(\sqrt{x}+1) + 0.63 < \log\log(x)$, where $\gamma = 0.57721\dots$ is Euler's constant. 
Hence for $x \ge 10\cdot A$, we have
{\small
\begin{equation} \label{2-3-eq4}
\begin{aligned}
 \sum_{p < \sqrt{x}+1 } \log(\frac{1}{p-1}) \le \sum_{p\le \sqrt{x}+1 } \log(\frac{p}{p-1})  - \vartheta(\sqrt{x}) 
 < \log\log(x) - 0.94 \sqrt{x}.
\end{aligned}
\end{equation}
}

By \cite{Dusart2018ExplicitEO}, Corollary 5.2,  for $x > 1$ we have
{\small
\begin{equation} \label{2-3-eq5}
\begin{aligned}
\frac{x}{\log(x)} \mathop{\le}\limits_{x\ge 17} \pi(x) 
 \mathop{\le}\limits_{x>1} \frac{x}{\log(x)}(1+\frac{1}{\log(x)}+\frac{2.53816}{\log^2(x)})
 \mathop{\le}\limits_{x\ge e^{31}} \frac{x}{\log(x)}(1+\frac{1.082}{\log(x)}) .
\end{aligned}
\end{equation}
}
Then for $x \ge 10\cdot A > \exp(19.48)$, we have 
$\sum_{p < \sqrt{x}+1 } \log(x) \le (\pi(\sqrt{x})+1) \cdot \log(x) 
\le 2\sqrt{x}(1+\frac{1}{\log(\sqrt{x})}+\frac{2.53816}{\log^2(\sqrt{x})})+\log(x) < 2.26\sqrt{x}$.
Hence by (\ref{2-3-eq4}), $\sum_{p<\sqrt{x}+1 } \log(\frac{x} {p-1} ) 
= \sum_{p < \sqrt{x}+1 } \log(x) + \sum_{p < \sqrt{x}+1 } \log(\frac{1}{p-1})  < 2.26\sqrt{x} + \log\log(x) - 0.94\sqrt{x} 
= 1.32 \sqrt{x} + \log\log(x) .$
Then by (\ref{2-3-eq1}), (\ref{2-3-eq2}) and (\ref{2-3-eq3}), for $x \ge 10\cdot A$ we have
{\small
\begin{equation*}
\begin{aligned}
 f_1(x) &= \sum_{p\le x}\frac{\log(p)}{p-1}+ (1+\frac{1}{x})\vartheta(x)
 -\frac{1}{x}\sum_{p\le x} p\cdot\log(p) + \sum_{p<\sqrt{x}+1 } \log(\frac{x} {p-1} )
 \\ &< \log(x)+0.8 + (1+\frac{1}{x})\cdot x\cdot (1+\frac{\epsilon}{\log(x)}) 
 - \frac{1}{x}\cdot(\frac{1}{2}-\frac{1.523 \epsilon}{\log(x)}) \cdot x^2 +  1.32 \sqrt{x} + \log\log(x) 
 \\ &< \frac{1}{2} \cdot x + 0.01865 \cdot \frac{x}{\log(x)} .
\end{aligned}
\end{equation*}}

The computationin \cite{code_of_zpzhou}] shows that $f_1(n) < \frac{1}{2} \cdot n + 0.01865 \cdot \frac{n}{\log(n)}$ holds for any integer $2\cdot 10^5 \le n \le 10\cdot A$. Hence for any integer $n \ge 2\cdot 10^5$,
we have 
{\small
\begin{equation*}
 f_1(n) < \frac{1}{2} \cdot n + 0.01865 \cdot \frac{n}{\log(n)} .
\end{equation*}
}
Then for any integer $n \ge 2\cdot 10^5$, we have
{\small
\begin{equation*}
\begin{aligned}
 \\ f_2(n) &= \frac{n^2+5n}{n^2+n-12}\cdot(f_1(3n) + \frac{10}{3}\cdot \log(n) + 9 )
 \\ &< (1+\frac{5}{n})\cdot (\frac{3}{2}\cdot n+ 0.05595 \cdot \frac{n}{\log(n)}
 + \frac{10}{3}\cdot \log(n) + 9)
 < \frac{3}{2}\cdot n + 0.06 \cdot \frac{n}{\log(n)} .
\end{aligned}
\end{equation*}
}
This proves assertion (i).

Next, we consider assertion (ii).
For $x\ge e^{31}$, we have $\log(x)\ge 31$.
Then by (eq5) we have 
\small\begin{align*}
 f_3(x) &= \pi(\frac{2}{3}\sqrt{x \log(x)}) - \pi(\sqrt{x/\log(2)})
 \\ &\ge \frac{\frac{2}{3}\sqrt{x \log(x)}}{\log(\frac{2}{3}\sqrt{x \log(x)})} - \frac{\sqrt{x/\log(2)}}{\log(\sqrt{x/\log(2)})}(1+\frac{1.082}{\log(\sqrt{x/\log(2)})})
 \\ &\ge \frac{\frac{4}{3}\sqrt{x \log(x)}}{\log(x)+\log\log(x)} 
 - \frac{2\sqrt{x/\log(2)}}{\log(x)+\log\log(x)}\cdot (1+\frac{\log\log(x)}{\log(x)}) \cdot (1+\frac{1.082\times 2}{\log(x)-\log\log(2)})
 \\ &\ge \frac{\frac{4}{3}\sqrt{x \log(x)}}{\log(x)+\log\log(x)} 
 - \frac{\sqrt{x}\cdot 2 / \sqrt{\log(2)} }{\log(x)+\log\log(x)}
 \cdot (1+\frac{\log(31)}{31}) \cdot (1+\frac{1.082\times 2}{31-\log\log(2)})
 \\ & > \frac{\frac{4}{3}\sqrt{x}\cdot(\sqrt{\log(x)}-2.14) }{\log(x)+\log\log(x)}.
\end{align*} \normalsize

To estimate $f_4(x)$, we shall write $\alpha = \sqrt{x/\log(2)}$ and $\beta = \frac{2}{3}\sqrt{x \log(x)}$ for convenience.
For $x \ge e^{31}$, by (eq5) we have
\small\begin{align*}
 f_4(x) &= \sum_{\alpha < p \le \beta} p
 = \int_{\alpha}^{\beta} p\cdot d(\pi(t))
 = \beta\cdot \pi(\beta)-\alpha\cdot \pi(\alpha) - \int_{\alpha}^{\beta}\pi(t)\cdot dt
 \\ &< \beta\cdot \pi(\beta) - \alpha\cdot \pi(\alpha) - \frac{1}{2}\int_{\alpha}^{\beta} d(\frac{t^2}{\log(t)})
 \\ &= \beta\cdot \pi(\beta) - \frac{\beta^2}{2\log(\beta)}
 - \alpha\cdot \pi(\alpha) + \frac{\alpha^2}{2\log(\alpha)}
 \\ &< \frac{\beta^2}{2\log(\beta)}(1+\frac{2.164}{\log(\beta)})
 < \frac{4}{9}x\cdot(1+\frac{4.33}{\log(x)}).
\end{align*}\normalsize
This proves assertion (ii).

\end{proof}

\begin{prop} \label{2-prop: the 1st ABC inequality}
Let $(a,b,c)$ be a triple of non-zero coprime integers such that $a+b=c$, $l\ge 11$ be a prime number. Write $N \defeq |abc| / \gcd(16,abc)$.
Suppose that (a) $l\neq 13$; or (b) $16\mid abc$.
Then with $f_2(l)$ defined in Lemma \ref{2-lem: new estimates of arithmetic functions}, (i), 
we have
{\small
\begin{equation*}  
\begin{aligned}
\log(N) \le \, & (3+\frac{11l+31}{l^2+l-12}) \cdot \log\rad(N)
+ \sum_{p:\, p=l \,\text{or}\, l\mid v_p(N)} v_p(N)\cdot\log(p)  + 3f_2(l).
\\ \mathop{\le}_{n\ge 2\cdot 10^5} &  (3+\frac{11l+31}{l^2+l-12}) \cdot \log\rad(N)
+ \sum_{p:\, p=l \,\text{or}\, l\mid v_p(N)} v_p(N)\cdot\log(p) + \frac{9}{2}\cdot n + 0.18 \cdot \frac{n}{\log(n)} . 
\end{aligned}
\end{equation*}
}
\end{prop}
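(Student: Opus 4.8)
The plan is to derive the first displayed inequality from the ``partial abc inequality'' (\ref{2-1-eq0}) of Proposition~\ref{2-prop: local abc-inequalities} by showing that $\Vol(\mathfrak R_l)\le f_2(l)$, and then to obtain the second displayed inequality by inserting the estimate $f_2(l)<\frac{3}{2}l+0.06\cdot\frac{l}{\log l}$ of Lemma~\ref{2-lem: new estimates of arithmetic functions}, (i), which is valid once $l\ge 2\cdot 10^5$ (the symbol $n$ in the second line of the statement is to be read as $l$). So the entire content of the proposition reduces to the single bound $\Vol(\mathfrak R_l)\le f_2(l)$.

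To estimate $\Vol(\mathfrak R_l)$ I would apply the explicit upper bound (\ref{1.11-uppper bound of vol(R)}) of Remark~\ref{1-rmk: The log-volume of ramification dataset2}, (ii), which controls $\frac{l^2+l-12}{l^2+5l}\Vol(\mathfrak R_l)$ in terms of the quantities $e_0(p)$ and $d_0(p)$. For the dataset $\mathfrak R_l$ these are read off from its definition: for every prime $p\notin\{2,3,l\}$ one has $e_0(p)=e_0l=3l$ and no $d_0$-term; for $p=2$, using $l\ge 11$, one has $e_0(2)=\max\{48,6l\}=6l$ and $d_0(2)=1+v_2(16)=5$ (the maximum over $S_2^{\good}\cup S_2^{\multi}$ being attained at $e=16$, not at a multiplicative index); for $p=3$, $e_0(3)=\max\{8,6l\}=6l$ and $d_0(3)=1+v_3(6)=2$; and for $p=l$, $e_0(l)=\max\{l^2-1,3l(l-1)\}=3l(l-1)$ and $d_0(l)=1+v_l\big(l(l-1)\big)=2$.

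With these values substituted, the next step is a term-by-term comparison against $f_1(3l)$. Using the identity $(1+\frac{1}{3l})\sum_{p\le 3l}\log p-\frac{1}{3l}\sum_{p\le 3l}p\log p=\sum_{p\le 3l}(1-\frac{p-1}{3l})\log p$ one rewrites $f_1(3l)=\sum_{p\le 3l}\big(\frac{1}{p-1}+1-\frac{p-1}{3l}\big)\log p+\sum_{p<\sqrt{3l}+1}\log\frac{3l}{p-1}$. The contribution of the primes $p\notin\{2,3,l\}$ to (\ref{1.11-uppper bound of vol(R)}) is then bounded by the matching part of $f_1(3l)$: here one uses that the primes with $3l\ge p-1$ are exactly those with $p\le 3l$ (since $3l$ and $3l+1$ are composite), that $p(p-1)<3l$ forces $p<\sqrt{3l}+1$, and that $l\ge\sqrt{3l}+1$ for $l\ge 5$ so that $l$ contributes nothing to the $\sqrt{3l}+1$-sum, while the remaining terms of that sum are non-negative. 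The leftover contributions from $p\in\{2,3,l\}$ — the $d_0$-terms, the correction coming from $e_0(p)$ exceeding $3l$, and the residual third-sum terms at $p=2,3,l$ — together with $\log\pi$ collapse, after an elementary rearrangement, to the requirement
\begin{equation*}
\log\pi+\Big(7+\tfrac{1}{6l}\Big)\log 2+\Big(3+\tfrac{1}{3l}\Big)\log 3+\Big(2+\tfrac{l-2}{3l}\Big)\log l\ \le\ \tfrac{10}{3}\log l+9,
\end{equation*}
equivalently $\log\pi+(7+\tfrac{1}{6l})\log 2+(3+\tfrac{1}{3l})\log 3-9\le(1+\tfrac{2}{3l})\log l$. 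The left side of this last inequality is decreasing in $l$ and takes the value $\approx 0.34$ at $l=11$, whereas the right side is $\ge\log 11>2$; hence it holds for every prime $l\ge 11$. Feeding this back through (\ref{1.11-uppper bound of vol(R)}) yields $\Vol(\mathfrak R_l)\le f_2(l)$, and then (\ref{2-1-eq0}) gives the first assertion, with the second following from Lemma~\ref{2-lem: new estimates of arithmetic functions}, (i) as above.

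I expect the bookkeeping in the term-by-term comparison to be the main obstacle: one must track precisely which primes enter each of the three sums defining $f_1$ as opposed to the sums in (\ref{1.11-uppper bound of vol(R)}), keep the thresholds $p\le 3l$, $p(p-1)<3l$ and $p<\sqrt{3l}+1$ mutually consistent, and correctly collect the $\log 2$-, $\log 3$- and $\log l$-coefficients produced by the three special primes $\{2,3,l\}$ — in particular recognizing that $d_0(2)=5$ is forced by $16\in S_2^{\good}$. Once those coefficients are pinned down, the final step is only the routine monotonicity check recorded above.
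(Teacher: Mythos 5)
Your approach mirrors the paper's exactly: invoke Proposition~\ref{2-prop: local abc-inequalities} and Remark~\ref{1-rmk: The log-volume of ramification dataset2},~(ii), read off $e_0(p)$ and $d_0(p)$ from the dataset $\mathfrak{R}_l$, and compare the right-hand side of~(\ref{1.11-uppper bound of vol(R)}) term by term against $f_1(3l)$. Your values $e_0(2)=e_0(3)=6l$, $d_0(2)=5$, $d_0(3)=2$, $e_0(l)=3l(l-1)$, $d_0(l)=2$ are all correct, and the reorganization of $f_1(3l)$ via the identity $(1+\tfrac{1}{x})\vartheta(x)-\tfrac{1}{x}\sum_{p\le x}p\log p=\sum_{p\le x}(1-\tfrac{p-1}{x})\log p$ is the right move.

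However, there is an arithmetic slip in the collection of the $\log l$-coefficient, and it is fatal to the argument as written. The prime $p=l$ contributes to all three sums in~(\ref{1.11-uppper bound of vol(R)}): the $d_0$-term gives $d_0(l)\log l=2\log l$; the first-sum excess over the matching term of $f_1(3l)$ is $\bigl(\tfrac{l-1}{3l}-\tfrac{l-1}{3l(l-1)}\bigr)\log l=\tfrac{l-2}{3l}\log l$; and — this is what you dropped — the third-sum term $\log\tfrac{e_0(l)}{l-1}=\log(3l)=\log 3+\log l$ is \emph{entirely} excess, since you yourself observe that $l>\sqrt{3l}+1$ for $l\ge 5$ so $p=l$ does not appear in $\sum_{p<\sqrt{3l}+1}\log\tfrac{3l}{p-1}$. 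You did absorb the $\log 3$ half of $\log(3l)$ into your $\log 3$-coefficient $(3+\tfrac{1}{3l})$ (correctly: $2$ from $d_0(3)$, $1$ from this term, $\tfrac{1}{3l}$ from the first-sum excess at $p=3$), but the $\log l$ half went missing. The correct coefficient on $\log l$ is therefore $3+\tfrac{l-2}{3l}$, not $2+\tfrac{l-2}{3l}$.

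With the correct coefficient the ``leftover'' inequality becomes
\begin{equation*}
\log\pi+\Bigl(7+\tfrac{1}{6l}\Bigr)\log 2+\Bigl(3+\tfrac{1}{3l}\Bigr)\log 3-9\ \le\ \tfrac{2}{3l}\log l,
\end{equation*}
whose left side is $\approx 0.34$ at $l=11$ (as you compute) and tends to $\log\pi+7\log 2+3\log 3-9\approx 0.29$ as $l\to\infty$, while the right side is $\approx 0.15$ at $l=11$ and tends to $0$. So the inequality fails for every $l\ge 11$, and your chain $\Vol(\mathfrak R_l)\le f_2(l)$ does not close by this route. The extra factor $(1+\tfrac{2}{3l})\log l$ that made your check pass comfortably is exactly the missing $\log l$. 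It is worth noting that the paper's own proof of this proposition (after (\ref{2-2-eq1})--(\ref{2-2-eq3})) claims the same collapse, so you have faithfully reproduced the structure of the argument; but as a self-contained derivation your bookkeeping does not substantiate the stated $+9$ in $f_2$, and you should re-audit the constant term against that of Lemma~\ref{2-lem: new estimates of arithmetic functions},~(i) rather than trust the rearranged inequality as stated.
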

\begin{proof}
Proceeding with the notation of Proposition \ref{2-prop: local abc-inequalities}, we shall estimate the upper bound of $\Vol(\mathfrak{R}_l)$ via Remark \ref{1-rmk: The log-volume of ramification dataset2}.
Recall that $\mathfrak{R}_l$ consists of the following data:
\begin{gather*}
l_0 = l,\; e_0 = 3,\;  S_0=\{2,3,l\},\; S_{\gen}^{\multi} = \{1,3,l,3l\},\;
S_2^{\good} = \{e: e\mid 48, 2\mid e\}, \\
S_3^{\good} = \{2,6,8\},\; S_l^{\good} = \{l-1, l(l-1), l^2-1\},\;
S_2^{\multi} = \{2,6,2l,6l\}, \\
S_3^{\multi} = \{2,6,2l,6l\},\; S_l^{\multi} = \{l-1, 3(l-1), l(l-1), 3l(l-1)\}.
\end{gather*}
Following the notation of Remark \ref{1-rmk: The log-volume of ramification dataset2},
for $p\neq 2,3,l$, we have $e_0(p) = 3l$; for $p\in\{2,3,l\}$, we have $e_0(2) = \max\{48,  6l \} \le 6l$, $d_0(2) = 5$, $e_0(3) = 6l$, $d_0(3)=2$, $e_0(l) = 3l(l-1)$, $d_0(l) = 2$.
Then by (\ref{1.11-uppper bound of vol(R)}) we have
\begin{equation} \label{2-2-eq0}
\begin{aligned}
\frac{l^2 + l - 12}{l^2 + 5l} \cdot \Vol(\mathfrak R_l)
     \le \log(\pi) + & \sum_{p \le e_0(p)+1 } (\frac{1}{p-1}+1-\frac{p-1}{e_0(p)}) \cdot \log(p) 
      \\&  +  \sum_{p\in S_0} d_0(p) \cdot \log(p)  
       + \sum_{e_0(p) > p(p-1)} \log(\frac{e_0(p)} {p-1} ) .
\end{aligned}
\end{equation} 
We shall estimate the partial sums in the right hand of (\ref{2-2-eq0}).

For $p \in \{2,3,l\}$, we have $\frac{1}{p-1}+1-\frac{p-1}{e_0(p)} < (\frac{1}{p-1}+1-\frac{p-1}{3l}) + \frac{p-1}{3l}$.
For $p\notin \{2,3,l\}$, $e_0(p) = 3l$. If we have $p \le e_0(p)+1 = 3l+1$, then since $p$ is a prime number, we have $p\le 3l-2$. 
Hence we have
{\small
\begin{equation} \label{2-2-eq1}
\begin{aligned}
&\sum_{p \le e_0(p)+1 } (\frac{1}{p-1}+1-\frac{p-1}{e_0(p)}) \cdot \log(p) 
\\\le&  \sum_{p\le 3l-2}(\frac{1}{p-1}+1-\frac{p-1}{3l})\cdot\log(p) 
+ \frac{\log(2)+2\log(3)+(l-1)\log(l)}{3l} . 
\end{aligned}
\end{equation}
}

Since $S_0 = \{2,3,l\}$, we have 
{\small
\begin{equation} \label{2-2-eq2}
\sum_{p\in S_0} d_0(p) \cdot \log(p)
= d_0(2) \cdot \log(2)  + d_0(3) \cdot \log(3)  + d_0(l) \cdot \log(l)  
= \log(2^5\cdot 3^2\cdot l^2). 
\end{equation}
}

For $p\notin \{2,3,l\}$, $e_0(p) = 3l$. If we have $p(p-1) < e_0(p) = 3l$, then $p < \sqrt{3l}+1 < l$. Since $e_0(2)  = 6l$, $e_0(3) = 6l$, $e_0(l) = 3l(l-1)$, we have 
{\small
\begin{equation} \label{2-2-eq3}
\begin{aligned}
 \sum_{e_0(p) > p(p-1)} \log(\frac{e_0(p)} {p-1} )
\le & \log(\frac{e_0(2)}{3l}) + \log(\frac{2e_0(3)}{3l}) + \sum_{p < \sqrt{3l}+1 } \log(\frac{3l} {p-1} ) + \log(\frac{e_0(l)}{l-1})
\\ <& \sum_{p < \sqrt{3l}+1 } \log(\frac{3l} {p-1} ) + \log(24l) .
\end{aligned}
\end{equation}
}

Finally, since $l\ge 11$, by combaining (\ref{2-2-eq0}), (\ref{2-2-eq1}), (\ref{2-2-eq2}), (\ref{2-2-eq3}) and Lemma \ref{2-lem: new estimates of arithmetic functions}, (i), we have
{\small
\begin{equation*}  
\begin{aligned}
 & \Vol(\mathfrak R_l)
 <  \frac{l^2+5l}{l^2+l-12}\cdot \big(f_1(3l) + \log(\pi)  + \frac{\log(2)+2\log(3)+(l-1)\log(l)}{3l} + \log(2^5\cdot 3^2\cdot l^2) 
 \\ & + \log(20l) \big)  <  \frac{l^2+5l}{l^2+l-12}\cdot \big(f_1(3l) + \frac{10}{3}\cdot \log(l) + 9\big) = f_2(l) \mathop{<}\limits_{n\ge 2\cdot 10^5} \frac{3}{2}\cdot n + 0.06 \cdot \frac{n}{\log(n)} .
\end{aligned}
\end{equation*}
}

\end{proof}

\section{Effective abc inequalities}
In this section, we focus on effective versions of abc inequalities over the rational number field. 

\begin{lem}\label{3-lem:ABC-type inequality}
Let $N$ be a positive integer, $h\defeq \log(N).$ 
Let $S$ be a finite set of prime numbers $\ge 5$, with cardinality $n\defeq |S|\ge 2$.
Let $p_0$ be the smallest prime number in $S$.

Let $2 \le k \le n$ be an integer, $k(S)$ be the the product of the smallest $k$ numbers in $S$. 
Suppose that $k(S) > h/\log(2)$.

For each prime number $l$, write $N_l\defeq \prod_{p:l\mid v_p(N)}p^{v_p(N)}$.
For each $l\in S$, suppose that
\small\begin{align} \label{3-1-eq1}
    \sum_{p:p\neq l, l\nmid v_p(N)}v_p(N)\cdot \log(p)
    \le (3+\frac{11l+31}{l^2+l-12}) \cdot\log\rad(N) + 3\Vol(l). 
\end{align}\normalsize
for some real number $\Vol(l) \ge 0$.

(i) Write
\small\begin{align*}
    A\defeq \{p\in S : p\mid N\},
    \; B\defeq \{p\mid N: \exists l\in S,\, l\mid v_p(N) \},
    \; C\defeq \{p\mid N: p \notin (A\cup B)\};
\end{align*}\normalsize
\small\begin{align*}
    N_A \defeq \prod_{p\in A}p^{v_p(N)}, 
\; N_B \defeq \prod_{p\in B}p^{v_p(N)}, 
\; N_C \defeq \prod_{p\in C}p^{v_p(N)}.
\end{align*}\normalsize
Then we have
\small\begin{align*}
\sum_{p\in S}v_p(N)\cdot\log(p) = \log(N_A),
\; \sum_{p\in S} \log(N_p)\le (k-1)\cdot\log(N_B).
\end{align*}\normalsize

(ii) Write 
\small\begin{align*}
    a_1 \defeq \frac{1}{n}\sum_{l\in S}\frac{11l+31}{l^2+l-12},
    \quad a_2 \defeq \frac{3}{n}\sum_{l\in S}\Vol(l),
    \quad a_3 \defeq \sum_{l\in S}\log(l).
\end{align*}\normalsize
Then we have 
\small\begin{align*}
    h \le (3+a_1)\cdot\log\rad(N) + a_2 + \frac{kh}{n},
\end{align*}\normalsize

(iii) With the notaion in assertion (ii), we have
\small\begin{align*}
    h \le (3+a_1)\cdot\log\rad(N_C) + (\frac{k}{n}+\frac{3+a_1}{p_0})(h- \log(N_C) ) + a_2 + (3+a_1)a_3.
\end{align*}\normalsize

\end{lem}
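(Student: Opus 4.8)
The plan is to prove the three assertions in the stated order: (i) supplies two bookkeeping facts that feed into (ii), (ii) is an averaging of the partial abc inequalities (\ref{3-1-eq1}) over $l\in S$, and (iii) refines (ii) while keeping track of the contribution of $N_C$. For (i), the identity $\sum_{p\in S}v_p(N)\log(p)=\log(N_A)$ is immediate, since $v_p(N)=0$ whenever $p\in S$ and $p\nmid N$, so only the primes of $A$ contribute. For the second bound I would interchange the order of summation to get $\sum_{p\in S}\log(N_p)=\sum_{q\mid N}v_q(N)\log(q)\,\bigl|\{p\in S:p\mid v_q(N)\}\bigr|$, and then show that $\bigl|\{p\in S:p\mid v_q(N)\}\bigr|\le k-1$ for every $q\mid N$: otherwise the product of some $k$ of those primes (distinct, all lying in $S$) divides $v_q(N)$, forcing $v_q(N)\ge k(S)>h/\log(2)\ge h/\log(q)=\log_q(N)\ge v_q(N)$, a contradiction; and since a nonempty such set also forces $q\in B$, the sum is $\le(k-1)\sum_{q\in B}v_q(N)\log(q)=(k-1)\log(N_B)$.

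For (ii), fix $l\in S$ and split $h=\log(N)=\sum_p v_p(N)\log(p)$ into the left-hand side of (\ref{3-1-eq1}) plus the remainder $M_l\defeq v_l(N)\log(l)+\sum_{p\neq l,\,l\mid v_p(N)}v_p(N)\log(p)$, so that (\ref{3-1-eq1}) reads $h\le\bigl(3+\tfrac{11l+31}{l^2+l-12}\bigr)\log\rad(N)+3\Vol(l)+M_l$. Averaging over $l\in S$ gives $h\le(3+a_1)\log\rad(N)+a_2+\tfrac1n\sum_{l\in S}M_l$; by (i), $\sum_{l\in S}M_l=\log(N_A)+\sum_{l\in S}\sum_{p\neq l,\,l\mid v_p(N)}v_p(N)\log(p)\le\log(N_A)+\sum_{l\in S}\log(N_l)\le\log(N_A)+(k-1)\log(N_B)$, and since $N_A\mid N$ and $N_B\mid N$ this last quantity is $\le kh$, which yields the bound of (ii).

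For (iii), I would rerun the final step of (ii) exploiting that $C$ is disjoint from both $A$ and $B$: from $N_AN_C\mid N$ and $N_BN_C\mid N$ one gets $\log(N_A)\le h-\log(N_C)$ and $\log(N_B)\le h-\log(N_C)$, hence $\sum_{l\in S}M_l\le k\bigl(h-\log(N_C)\bigr)$ and therefore $h\le(3+a_1)\log\rad(N)+a_2+\tfrac kn\bigl(h-\log(N_C)\bigr)$. Separately I would estimate $\log\rad(N)=\log\rad(N_C)+\sum_{p\in A\cup B}\log(p)\le\log\rad(N_C)+\sum_{p\in A}\log(p)+\sum_{p\in B}\log(p)$; here $\sum_{p\in A}\log(p)\le\sum_{l\in S}\log(l)=a_3$ because $A\subseteq S$, while for each $p\in B$ the exponent $v_p(N)$ is a positive multiple of some $l\in S$, hence $v_p(N)\ge p_0$ and $\sum_{p\in B}\log(p)\le\tfrac1{p_0}\log(N_B)\le\tfrac1{p_0}\bigl(h-\log(N_C)\bigr)$. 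Substituting $\log\rad(N)\le\log\rad(N_C)+a_3+\tfrac1{p_0}\bigl(h-\log(N_C)\bigr)$ into the previous inequality and collecting the coefficient of $h-\log(N_C)$ gives exactly the inequality of (iii).

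The step I expect to be the main obstacle is the combinatorial bound in (i): checking that the hypothesis $k(S)>h/\log(2)$ is precisely what forbids $k$ distinct primes of $S$ from simultaneously dividing a single exponent $v_q(N)$. Beyond that, the only point requiring care is that in (iii) every estimate must be routed through $h-\log(N_C)$ — by means of the disjointness of $C$ from $A$ and from $B$ — rather than through $h$, since the coarser route would replace $\tfrac kn\bigl(h-\log(N_C)\bigr)$ by the weaker $\tfrac{kh}{n}$.
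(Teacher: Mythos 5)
Your proposal is correct and follows essentially the same route as the paper: the same interchange of summation and divisibility contradiction for (i), the same averaging of the partial abc inequalities for (ii), and the same substitution of the radical bound (via $A\subseteq S$ and $v_p(N)\ge p_0$ for $p\in B$) for (iii). The only cosmetic difference in (iii) is that you bundle $\frac1n\log(N_A)+\frac{k-1}{n}\log(N_B)\le\frac kn\bigl(h-\log(N_C)\bigr)$ before substituting the radical estimate, whereas the paper keeps $\log(N_B)$ explicit and combines its two coefficients at the last step; both yield the identical final coefficient $\frac kn+\frac{3+a_1}{p_0}$.
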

\begin{proof}
First, we consider assertion (i). 
By the definition of the set $A$, we have 
\small\begin{align*}
    \sum_{p\in S}v_p(N)\cdot\log(p) = \sum_{p:p\in S,p\mid N}v_p(N)\cdot\log(p) = \sum_{p\in A}v_p(N)\cdot\log(p) = \log(N_A).
\end{align*}\normalsize
By the definition of $N_p$, We have
\small\begin{align*}
    \sum_{p\in S} \log(N_p)
    &= \sum_{p \in S} \sum_{l:l\mid N,p\mid v_l(N)} v_l(N)\cdot \log(l)
    = \sum_{p \in S} \sum_{l:l\in B,p\mid v_l(N)} v_l(N)\cdot \log(l)
    \\ &= \sum_{l\in B}v_l(N)\cdot \log(l)\cdot\big(\sum_{p:p \in S,p\mid v_l(N)} 1 \big).
\end{align*}\normalsize

Now for each prime number $l\mid N$, we claim that $\sum_{p:p\in S, p\mid v_l(N)} 1 \le k-1$, which can be proved by contradiction as follows.
Assume that $p_1,\dots,p_k\in S$ are $k$ distinct prime numbers, such that $p_i\mid v_l(N)$ for $1\le i\le k$.
Then $\prod_{1\le i\le k}p_i \ge k(S) > h/\log(2)$ and $\prod_{1\le i\le k}p_i  \mid v_l(N)$.
Hence
\begin{align*}
    h = \log(N) &\ge v_l(N)\cdot \log(l)\ge \prod_{1\le i\le k}p_i\cdot\log(2)
    > (h/\log(2))\cdot\log(2) = h.
\end{align*}
--- a contradiction. Thus the claim is true. 

By the claim we have 
\small\begin{align*}
    \sum_{p\in S} \log(N_p)
    &= \sum_{l\in B}v_l(N)\cdot \log(l)\cdot\big(\sum_{p:p \in S,p\mid v_l(N)} 1 \big)
    \\&\le (k-1)\cdot\sum_{l\in B}v_l(N)\cdot \log(l) = (k-1)\cdot\log(N_B).
\end{align*}\normalsize
This proves assertion (i).

Next, we consider assertion (ii). By (\ref{3-1-eq1}), for each $l\in S$, we have 
\small\begin{align}  \label{3-1-eq1.1}
h = \log(N) \le (3+\frac{11l+31}{l^2+l-12}) \cdot\log\rad(N) 
+ 3 \Vol(l) + v_l(N)\cdot\log(l)+ \log(N_l) .
\end{align}\normalsize
Hence by taking the average of (\ref{3-1-eq1.1}) for $l\in S$ and by assertion (i), we have
{\small
\begin{equation} \label{3-1-eq2}
\begin{aligned}
 h &\le (3+a_1) \cdot\log\rad(N) + a_2 + \frac{1}{n}\cdot\log(N_A)+\frac{k-1}{n}\cdot\log(N_B)
    \\ &\le (3+a_1) \cdot\log\rad(N) + a_2 + \frac{kh}{n}.
\end{aligned}
\end{equation}
}
This proves assertion (ii).

Finally, we consider assertion (iii). By the definition of $N_A$, $N_B$, we have 
\small\begin{gather*}
\log\rad(N_A) = \sum_{p\in A}\log(p) \le \sum_{p\in S}\log(p) = a_3,
\\ \log(N_B) = \sum_{p\in B}v_p(N)\cdot\log(p)\ge \sum_{p\in B}p_0\cdot \log(p) = p_0\cdot\log\rad(N_B).
\end{gather*}\normalsize
Thus
\small\begin{align*}
    \log\rad(N) &\le \log\rad(N_A)+\log\rad(N_B)+\log\rad(N_C)
    \\ &\le \log\rad(N_C) + \frac{1}{p_0}\cdot \log(N_B) + a_3.
\end{align*}\normalsize
Then by (\ref{3-1-eq2}) and $\log(N_A),\log(N_B)\le \log(N)- \log(N_C) $, we have  
\small\begin{align*}
    h  &\le (3+a_1) \cdot\log\rad(N) + a_2 + \frac{1}{n}\cdot\log(N_A)+\frac{k-1}{n}\cdot\log(N_B)
    \\ &\le (3+a_1)\cdot \log\rad(N_C) + (3+a_1)a_3 + (\frac{k-1}{n}+\frac{3+a_1}{p_0})\log(N_B) + \frac{1}{n}\log(N_A) + a_2
    \\ &\le (3+a_1)\cdot\log\rad(N_C) + (\frac{k}{n}+\frac{3+a_1}{p_0})(\log(N)- \log(N_C) ) + a_2 + (3+a_1)a_3.
\end{align*}\normalsize
This proves assertion (iii).
\end{proof}

\begin{thm}\label{3-thm:ABC-type inequality}
Let $(a,b,c)$ be a triple of non-zero coprime integers such that $a+b=c$.

(i) Suppose that $\log(|abc|)\ge 700$, then we have
\begin{align*}
    \log(|abc|) \le 3\log\rad(abc) + 8 \sqrt{\log(|abc|)\cdot \log\log(|abc|)}.
\end{align*} 

(ii) Suppose that $\log(|abc|)\ge 3\cdot 10^{13}$, then we have
\begin{align*}
    \log(|abc|) \le 3\log\rad(abc) + 3&\sqrt{\log(|abc|)\cdot \log\log(|abc|)}
    \\ &\cdot (1 + \frac{6}{\sqrt{\log\log(|abc|)}} + \frac{15}{\log\log(|abc|)}).
\end{align*} 
\end{thm}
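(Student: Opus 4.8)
The plan is to deduce the theorem from Proposition \ref{2-prop: the 1st ABC inequality} and Lemma \ref{3-lem:ABC-type inequality} by averaging over an interval of auxiliary primes $l$. First I would make two harmless reductions. If $\log\rad(abc) > \tfrac13\log|abc|$ there is nothing to prove, because then $\log|abc| < 3\log\rad(abc)$ and the right-hand sides in (i), (ii) have a nonnegative error term; so assume $\log\rad(abc) \le \tfrac13\log|abc|$. Next put $N \defeq |abc|/\gcd(16,abc)$ and $h \defeq \log N$, so that $\log|abc| - \log 16 \le h \le \log|abc|$, and, since the only prime that can drop out in passing from $abc$ to $N$ is $2$, $\log\rad(N) \le \log\rad(abc) \le \log\rad(N) + \log 2$; in particular $\log\rad(N) \le \tfrac13 h + \tfrac13\log 16$. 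It then suffices to bound the excess $h - 3\log\rad(N)$.

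Since $\log N = \sum_p v_p(N)\log p$ splits according to whether "$p = l$ or $l \mid v_p(N)$" holds, rearranging the first displayed inequality of Proposition \ref{2-prop: the 1st ABC inequality} shows that for every prime $l \ge 11$ with $l \ne 13$, hypothesis (\ref{3-1-eq1}) of Lemma \ref{3-lem:ABC-type inequality} holds with $\Vol(l) = f_2(l)$. For the analytic range $h = \log N \ge e^{31}$ I would take $S \defeq S_h$, the set of primes in $(\sqrt{h/\log 2},\, \tfrac23\sqrt{h\log h}]$ of Lemma \ref{2-lem: new estimates of arithmetic functions}, (ii); then every $l \in S$ exceeds $\sqrt{e^{31}/\log 2} > 6\cdot 10^6$, so $l \ge 11$, $l \ne 13$, $l \ge 2\cdot 10^5$, whence (\ref{3-1-eq1}) holds with $\Vol(l) = f_2(l)$ and moreover $f_2(l) < \tfrac32 l + 0.06\, l/\log l$ by Lemma \ref{2-lem: new estimates of arithmetic functions}, (i). As each element of $S$ exceeds $\sqrt{h/\log 2}$, the product of its two smallest elements is $> h/\log 2$, so Lemma \ref{3-lem:ABC-type inequality} applies with $k = 2$ and $n = |S| = f_3(h) \ge 2$; rearranging its part (ii) gives
\[
h - 3\log\rad(N) \;\le\; \frac{na_1 + 6}{n-2}\,\log\rad(N) \;+\; \frac{n}{n-2}\,a_2,
\qquad a_1 = \tfrac1n\!\sum_{l\in S}\tfrac{11l+31}{l^2+l-12},\quad a_2 = \tfrac3n\!\sum_{l\in S} f_2(l).
\]

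Into this I would feed: $\log\rad(N) \le \tfrac13 h + \tfrac13\log 16$; the crude bound $\tfrac{11l+31}{l^2+l-12} \le 12/l \le 12/\min S$ (valid since $\min S > 25$), which gives $a_1 h \le 12\sqrt{(\log 2)\,h}$; the lower bound $n = f_3(h) \ge \tfrac43\,\sqrt h\,(\sqrt{\log h}-2.14)/(\log h + \log\log h)$ and the upper bound $\sum_{l\in S} l = f_4(h) < \tfrac49 h\,(1 + 4.33/\log h)$ from Lemma \ref{2-lem: new estimates of arithmetic functions}, (ii), together with $\sum_{l\in S} f_2(l) < (\tfrac32 + \tfrac{0.06}{\log\min S})f_4(h)$, to control $\tfrac{2h}{n} + a_2$. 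The right-hand side then becomes an explicit increasing function of $h$ whose asymptotic leading term is $\tfrac{4h}{f_3(h)} \sim 3\sqrt{h\log h}$, with lower-order contributions of size $O(\sqrt h)$ and $O(1)$; a direct estimate shows it stays below $3\sqrt{h\log h}\,(1 + 6/\sqrt{\log h} + 15/\log h) - \log 16$ once $h \ge 3\cdot 10^{13} - \log 16$, and below $8\sqrt{h\log h} - \log 16$ on the same range. Translating back via the reductions yields (ii), and yields (i) for $\log|abc| \ge 3\cdot 10^{13}$, since there the factor $1 + 6/\sqrt{\log h} + 15/\log h$ is below $8/3$. Finally, for $700 \le \log|abc| < 3\cdot 10^{13}$ — where Lemma \ref{2-lem: new estimates of arithmetic functions}, (ii), is unavailable — I would subdivide this interval into finitely many pieces, on each piece fix an explicit finite set $S$ of primes $\ge 11$, $\ne 13$ (and an explicit $k$) with $k(S) > \log|abc|/\log 2$ throughout, compute $n$, $a_1$ and $a_2 = \tfrac3n\sum_{l\in S} f_2(l)$ exactly, and apply Lemma \ref{3-lem:ABC-type inequality}, (ii) or (iii); checking that the resulting bound lies below $3\log\rad(abc) + 8\sqrt{\log|abc|\log\log|abc|}$ on each piece is the verification recorded in \cite{code_of_zpzhou}.

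The delicate point is the bookkeeping in the analytic range: the two dominant contributions $\tfrac{2h}{n}$ and $a_2$ are each of exact order $\sqrt{h\log h}$ and their combined leading constant is already close to $3$, so the explicit lower bound for $f_3$ and upper bound for $f_4$, the refinement $\log\rad(N) \le \tfrac13 h + O(1)$, and the precise threshold $3\cdot 10^{13}$ must all be exploited to absorb the secondary terms — the $a_1 h$ term, the $0.06\,l/\log l$ part of $f_2$, the $4.33/\log h$ factor in $f_4$, and the $\log 16$ coming from the normalization $N = |abc|/\gcd(16,abc)$ — into the stated correction factor $1 + 6/\sqrt{\log h} + 15/\log h$.
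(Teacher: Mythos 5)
Your proposal is correct and follows essentially the same route as the paper: the same normalization $N = |abc|/\gcd(16,abc)$, the same choice of $S$ as the primes in $(\sqrt{h/\log 2},\ \tfrac23\sqrt{h\log h}]$ with $k=2$, Proposition~\ref{2-prop: the 1st ABC inequality} with $\Vol(l)=f_2(l)$ feeding Lemma~\ref{3-lem:ABC-type inequality}~(ii), the estimates of Lemma~\ref{2-lem: new estimates of arithmetic functions} for $a_1$, $a_2$, $n$, and computational verification on the gap interval up to $e^{31}$. The only differences are cosmetic: your preliminary reduction $\log\rad(abc)\le\tfrac13\log|abc|$ and the division by $1-2/n$ tighten the secondary $a_1$-term slightly, whereas the paper simply uses $\log\rad(N)\le h$ and keeps $2h/n$ on the right, but both bookkeepings land on the same leading asymptotic $3\sqrt{h\log h}$ and the same correction factor.
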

\begin{proof}
Write $N \defeq |abc|/\gcd(16,abc)$, $h\defeq \log(N).$ 
We shall proceed with the notaion in Lemma \ref{3-lem:ABC-type inequality}.
We will choose suitable $S$, such that for each $l\in S$, we have $l\ge 11$ and $l\neq 13$. 
Then by Proposition \ref{2-prop: the 1st ABC inequality} we can take 
\begin{align*}
\Vol(l)=f_2(l) \mathop{\le}_{n\ge 2\cdot 10^5} \frac{3}{2}\cdot l + 0.06 \cdot \frac{l}{\log(l)}.
\end{align*} 

For assertion (i), consider the case $h \ge e^{31}$ at first. 
We shall write $$k=2,\quad S=\{l\in P: \sqrt{h/\log(2)} < l \le \frac{2}{3}\sqrt{h \log(h)} \}.$$
Since $h\ge e^{31}$, for any $l\in S$, we have $l \ge p_0 > \sqrt{h/\log(2)} > e^{15}$.
Since $k(S) > p_0^2 > h/\log(2)$ and $n = |S| \ge 2$, 
the set $S$ satifies the premise of Lemma \ref{3-lem:ABC-type inequality}.

Since $\log\rad(N)\le h$, by Lemma \ref{3-lem:ABC-type inequality}, (ii) we have 
\begin{align}  \label{3-2-eq1}
    h \le 3\cdot\log\rad(N) + a_1\cdot h+a_2 + \frac{2h}{n}, 
\end{align}
where $$a_1 = \frac{1}{n}\sum_{l\in S}\frac{11l+31}{l^2+l-12},
\quad a_2 = \frac{3}{n}\sum_{l\in S}\Vol(l).$$
Since $p_0 > e^{15}$, by Proposition \ref{2-prop: the 1st ABC inequality} we can take 
$$\Vol(l)=f_2(l) < \frac{3}{2}\cdot l + 0.06 \cdot \frac{l}{\log(l)}
\le \frac{3}{2}\cdot l + \frac{0.06 \cdot \frac{2}{3}\sqrt{h \log(h)}}{\log(\frac{2}{3}\sqrt{h \log(h)})} 
\le \frac{3}{2}\cdot l + \frac{0.08 \sqrt{h}}{ \sqrt{\log(h)} }
$$ for each $l\in S$.
Also, by Lemma \ref{2-lem: new estimates of arithmetic functions}, (ii) we have
\small\begin{align*}
f_3(h) = n = |S| \ge \frac{\frac{4}{3}\sqrt{h}\cdot(\sqrt{\log(h)}-2.14) }{\log(h)+\log\log(h)},
\; f_4(h) = \sum_{l\in S}l < \frac{4}{9}h\cdot(1+\frac{4.33}{\log(h)}).
\end{align*}\normalsize
Hence for $h\ge e^{31}$, we have
\small\begin{align*}
    \\ a_1 &= \frac{1}{n}\sum_{l\in S}\frac{11l+31}{l^2+l-12}
    < \frac{1}{n}\sum_{l\in S}\frac{12}{l} < \frac{12}{p_0}<\frac{12\sqrt{\log(2)}}{\sqrt{h}} < \frac{10}{\sqrt{h}}
    \\ a_2 &= \frac{3}{n}\sum_{l\in S}\Vol(l)
    < \frac{9}{2n}\sum_{l\in S}l + 3 \cdot \frac{0.08 \sqrt{h}}{ \sqrt{\log(h)} }
    = \frac{9f_4(h)}{2n} + \frac{0.24 \sqrt{h}}{ \sqrt{\log(h)} },
\end{align*}\normalsize
and
\small\begin{align*}   
    \frac{9f_4(h)}{2n}+\frac{2h}{n}
    &< \frac{4h}{n}\cdot (1+\frac{2.165}{\log(h)}) 
    = \frac{4h}{f_3(h)}\cdot (1+\frac{2.165}{\log(h)}) 
    \\ &< 3\sqrt{h\log(h)}\cdot(1+\frac{2.165}{\log(h)})\cdot (1-\frac{2.14}{\sqrt{\log(h)}})^{-1}\cdot(1+\frac{\log\log(h)}{\log(h)}).
\end{align*}\normalsize
Thus we have
{\small
\begin{equation} \label{3-2-eq2}
    a_1\cdot h+a_2 + \frac{2h}{n} 
    < 10\sqrt{h} + \frac{0.24 \sqrt{h}}{\sqrt{\log(h)} } + \big(\frac{9f_4(h)}{2n}+\frac{2h}{n}\big)
    < 3\sqrt{h\log(h)}\cdot g(\log(h)),
\end{equation}
}
where for $x=\log(h)\ge 31$, we have
\small\begin{align*}
    g(x) \defeq (1+\frac{2.165}{x})\cdot(1-\frac{2.14}{\sqrt{x}})^{-1}\cdot(1+\frac{\log(x)}{x}) + \frac{10}{3\sqrt{x}} + \frac{0.08}{x} 
    < 1 + \frac{6}{\sqrt{x}} + \frac{14.5}{x}.
\end{align*}\normalsize
Then for $h\ge e^{31}$, by (\ref{3-2-eq1}), (\ref{3-2-eq2}) we have 
\small\begin{align*}
    h - 3\log\rad(N)&\le a_1\cdot h+a_2 + \frac{2h}{n} 
    < 3\sqrt{h\log(h)}\cdot g(\log(h))
    \\ &<3\sqrt{h\log(h)}\cdot (1 + \frac{6}{\sqrt{\log(h)}} + \frac{14.5}{\log(h)}),
\end{align*}\normalsize
hence for $h\ge e^{31}$, we have 
\small\begin{align} \label{3-2-eq3}
    h - 3\log\rad(N) + 4\log(2) 
    < 3\sqrt{h\log(h)}\cdot (1 + \frac{6}{\sqrt{\log(h)}} + \frac{15}{\log(h)})
    < 8\sqrt{h\log(h)} . 
\end{align}\normalsize
The computation in \cite{code_of_zpzhou} shows that for $680 \le h \le e^{31}$, 
we have \small\begin{align}  \label{3-2-eq4}
    h - 3\log\rad(N) + 4\log(2) < 8 \sqrt{h\log(h)}.
\end{align}\normalsize
Then (\ref{3-2-eq4}) is true for any $h \ge 680$ by (\ref{3-2-eq3}).

Since $N=|abc|/\gcd(abc,16)$, we have 
$h=\log(N)\le \log(|abc|) \le h+4\log(2).$
Hence when $\log(|abc|)\ge 700$, we have $h\ge \log(|abc|)-4\log(2) > 680$. 
Then by (\ref{3-2-eq4}), we have
\small\begin{align*}
    \log(|abc|) - 3\log\rad(abc) 
    \le& h - 3\log\rad(N) + 4\log(2) 
    < 8 \sqrt{h\log(h)} 
    \\ \le& 8 \sqrt{\log(|abc|)\cdot \log\log(|abc|)}.  
\end{align*}\normalsize
This proves assertion (i).

For assertion (ii), note that we have 
$h \ge \log(|abc|)-4\log(2) \ge 3\cdot 10^{13}-4\log(2) > e^{31}$. 
Then assertion (ii) follows easily from (\ref{3-2-eq3}).
\end{proof}

\begin{cor}\label{3-cor:ABC-type inequality, eps-version}
Let $(a,b,c)$ be a triple of non-zero coprime integers such that $a+b=c$;
$\epsilon$ be a positive real number $\le \frac{1}{10}$. Then we have 
\begin{align}\label{3-3-eq0}
    |abc| \le \max\{
    \exp\left(400 \cdot \epsilon^{-2} \cdot \log(\epsilon^{-1} ) \right),
    \rad(abc)^{3+3\epsilon} \}.
\end{align}
\end{cor}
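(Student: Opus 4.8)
The plan is to deduce this from Theorem \ref{3-thm:ABC-type inequality}, (i) by purely elementary estimates. Write $h \defeq \log|abc|$ and $r \defeq \log\rad(abc)$. If $|abc| \le \exp(400\cdot\epsilon^{-2}\cdot\log(\epsilon^{-1}))$ there is nothing to prove, so I may assume $h > 400\cdot\epsilon^{-2}\cdot\log(\epsilon^{-1})$; the goal then becomes $h \le (3+3\epsilon)\,r$. First I would observe that $\epsilon \le \tfrac1{10}$ forces $\epsilon^{-1}\ge 10$, whence $h > 400\cdot 100\cdot\log 10 > 700$, so Theorem \ref{3-thm:ABC-type inequality}, (i) applies and yields $h \le 3r + 8\sqrt{h\log h}$, i.e. $3r \ge h - 8\sqrt{h\log h}$.

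Next I would reduce the target to a one-variable inequality in $h$. Since $(3+3\epsilon)r = 3(1+\epsilon)r$, it suffices to have $(1+\epsilon)\bigl(h - 8\sqrt{h\log h}\bigr) \ge h$, i.e. $\epsilon\sqrt h \ge 8(1+\epsilon)\sqrt{\log h}$, i.e. (squaring, both sides positive since $h$ is large) $\dfrac{h}{\log h} \ge 64\,\dfrac{(1+\epsilon)^2}{\epsilon^2} = 64(1+\epsilon^{-1})^2$. Using $\epsilon^{-1}\ge 10$ one bounds $(1+\epsilon^{-1})^2 = 1 + 2\epsilon^{-1} + \epsilon^{-2} \le (0.01 + 0.2 + 1)\epsilon^{-2} = 1.21\,\epsilon^{-2}$, so it is enough to prove $\dfrac{h}{\log h} \ge 77.44\,\epsilon^{-2}$.

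Then I would use that $x \mapsto x/\log x$ is increasing for $x > e$: since $h$ exceeds $H_0 \defeq 400\,\epsilon^{-2}\log(\epsilon^{-1}) > e$, it suffices to check $H_0/\log H_0 \ge 77.44\,\epsilon^{-2}$, equivalently (writing $u \defeq \epsilon^{-1}\ge 10$, so $H_0 = 400u^2\log u$) that $400\log u \ge 77.44\,\log(400u^2\log u) = 77.44\bigl(\log 400 + 2\log u + \log\log u\bigr)$. Expanding ($\log 400 \approx 5.991$, $77.44\cdot 5.991 \approx 463.9$) this becomes $245.12\log u \ge 463.9 + 77.44\log\log u$. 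I would verify it at $u = 10$ (left side $\approx 564.4$, right side $\approx 528.5$) and note that the function $\phi(u) \defeq 245.12\log u - 77.44\log\log u$ has $\phi'(u) = u^{-1}\bigl(245.12 - 77.44/\log u\bigr) > 0$ for $u \ge 10$ (as $77.44/\log 10 \approx 33.6 < 245.12$), so $\phi(u) \ge \phi(10) > 463.9$ for all $u \ge 10$. This gives $h/\log h \ge 77.44\,\epsilon^{-2}$, hence $h \le (3+3\epsilon)r$, i.e. $|abc| \le \rad(abc)^{3+3\epsilon}$, as required.

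The argument involves no genuine obstacle; the only delicate point is calibrating the numerical constants so that the final one-variable inequality still holds at the boundary case $\epsilon = \tfrac1{10}$, where the margin is modest. Accordingly I would keep enough significant digits in the estimates for $\log 10$, $\log 400$, and $\log\log 10$ to make the surplus (roughly $564$ versus $529$ at $u=10$) unambiguous, and invoke the monotonicity of $\phi$ to cover all larger $u$ at once rather than estimating $\epsilon$-ranges piecewise.
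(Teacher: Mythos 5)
Your proof is correct and follows essentially the same route as the paper's: both reduce the problem (after invoking Theorem \ref{3-thm:ABC-type inequality}, (i)) to the single inequality $h/\log h \ge 64(1+\epsilon^{-1})^2$, and both establish it by monotonicity of an auxiliary function together with a check at the boundary $\epsilon^{-1}=10$. Your only departure is the harmless numerical relaxation $(1+\epsilon^{-1})^2\le 1.21\,\epsilon^{-2}$, which streamlines the boundary computation but does not change the structure of the argument.
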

\begin{proof}
Write \begin{align*}
h_0  \defeq 400 \cdot \epsilon^{-2} \cdot \log(\epsilon^{-1} ), 
\quad u_0 \defeq 64 \cdot (1 + \epsilon^{-1})^2 .
\end{align*}
For real number $x \ge 10$, write 
\begin{align*}
f(x) \defeq  &\log(400 \cdot x^{2} \cdot \log(x)) - \log\log(400 \cdot x^{2} \cdot \log(x) ) 
- \log( 64 \cdot (1 + x)^2).
\end{align*}
For $x\ge 10$, the derivative
\begin{align*}
f'(x) = \frac{2}{x(x+1)} + \frac{\log(400\log(x))-1}{x\log(x)\log(400 \cdot x^{2} \cdot \log(x))} > 0.
\end{align*}
Then since $\epsilon^{-1} \ge 10$, we have $$f(\epsilon^{-1}) = \log(h_0) - \log\log(h_0) - \log(u_0) \ge f(10) > 0.$$  
Hence we have
\begin{align} \label{3-3-eq1}
\frac{h_0}{\log(h_0)} > u_0.
\end{align}

When $|abc|\le e^{h_0} = \exp\left(400 \cdot \epsilon^{-2} \cdot \log(\epsilon^{-1} ) \right)$, (\ref{3-3-eq0}) is true. 
Hence we can assume that $|abc|>  e^{h_0}.$

Write $h \defeq \log(|abc|) > h_0 > 40000$, 
then  by (\ref{3-3-eq1}) we have 
\begin{align} \label{3-3-eq2}
    \frac{h}{\log(h)} > \frac{h_0}{\log(h_0)} > u_0.
\end{align}
By Theorem \ref{3-thm:ABC-type inequality}, (i), we have 
\begin{align*}
    h \le 3 \log\rad(|abc|) + 8 \sqrt{h\cdot \log(h)}.
\end{align*}
Then by (\ref{3-3-eq2}) and the definition of $u_0$, we have  
\begin{align*}
    \frac{3 \log\rad(|abc|)}{h} &\ge 1 - 8 \sqrt{\frac{\log(h)}{h}}
    \ge 1-\frac{8}{\sqrt{u_0}} = \frac{1}{1+\epsilon},
\end{align*}
which implies
$$h = \log(|abc|) \le (3+3\epsilon)\cdot \log\rad(|abc|).$$
Hecne $$|abc| \le \rad(abc)^{3+3\epsilon}.$$
\end{proof}

\begin{tiny-remark}
With the same notation, Corollary \ref{3-cor:ABC-type inequality, eps-version} improves the following inequality in \cite{ExpEst}, Theorem 5.4:
\begin{align*}
|abc| \le 2^4 \cdot \max\{\exp(1.7 \cdot 10^{30} \cdot \epsilon^{-166/81}), \rad(abc)^{3+3\epsilon}\}.
\end{align*}
\end{tiny-remark}

\section{Generalized Fermat Equations}

Let $r,s,t\ge 2$ be positive integers. The equation
\begin{equation} \label{GenFE} \tag{$\star$}
x^r + y^s = z^t, \;\text{with}\; x, y, z\in\Z
\end{equation}
is known as the generalized Fermat equation with signature $(r, s, t)$.
A solution $(x, y, z)$ of (\ref{GenFE}) is called non-trivial if $xyz\neq 0$, primitive if $\gcd(x, y, z)=1$, and positive if $x, y, z \in \N$.

Finding all the non-trivial primitive solutions to (\ref{GenFE}) is a long-standing problem in number theory. 
The Pythagorean triples, which are the positive solutions to (\ref{GenFE}) with signature $(2, 2, 2)$, have been known since ancient times. 
Fermat's Last Theorem, which is proven by Andrew Wiles in 1994 \cite{Wiles},
states that (\ref{GenFE}) has no positive primitive solution with signature $(n, n, n), n\ge 3$.
Catalan's conjecture, which is proven by Preda Mih\u{a}ilescu in 2002 \cite{Mihilescu2004PrimaryCU}, states that if $x=1$ in (\ref{GenFE}), then the only positive integer solution to $1 + y^s = z^t$ (with $s, t\ge 2$) is $(y, z, s, t) = (2, 3, 3, 2)$.

\subsection{A brief survey}
The behavior of primitive solutions to (\ref{GenFE}) is fundamentally determined by the size of the quantity $$\chi(r, s, t) = \frac{1}{r} + \frac{1}{s} + \frac{1}{t} - 1.$$
Here $\chi$ is the Euler characteristic of a certain stack associated to (\ref{GenFE}). 

For the spherical case where  $\chi > 0$, $(r, s, t)$ is a permutation of $(2, 2, t)$, $t\ge 2$, or $(2, 3, t)$, $3\le t\le 5$. 
In this case, parametrizations for non-trivial primitive solutions for these signatures can be found in Beukers \cite{spherical_case1}, Edwards \cite{spherical_case2} and Cohen \cite{Cohen-GTM240}, Section 14.

For the Euclidean case where $\chi = 0$, $(r, s, t)$ is a permutation  of $(2, 3, 6)$, $(2, 4, 4)$ or $(3, 3, 3)$.  
In this case, non-trivial primitive solutions for these signatures come from the solution $1^6 + 2^3 = 3^2$, cf. \cite{Bennett-Chen-Dahmen-Yazdani}, Proposition 6 for a proof.

For the hyperbolic case where $\chi < 0$, the number of primitive solutions for each fixed signature $(r, s, t)$ is finite, cf. Darmon-Granville \cite{Darmon-Granville}.
The following positive primitive solutions for the hyperbolic case are currently known:
\begin{gather*}
    1^n+2^3 = 3^2 \,\,(\text{for } n\ge 7), \quad 2^5+7^2 = 3^4, \quad 7^3+13^2 = 2^9, \quad
    2^7+17^3 = 71^2, \\
    3^5+11^4 = 122^2, \quad 17^7+76271^3 = 21063928^2, \quad
    1414^3+2213459^2 = 65^7, \\
    9262^3+15312283^2 = 113^7, \; \,
    43^8+96222^3 = 30042907^2, \; \, 33^8+1549034^2 = 15613^3.
\end{gather*}
Since all known solutions have $\min\{r, s, t\} \le 2$, Andrew Beal conjectured in 1993 (cf. \cite{Beal_Conj}) that (\ref{GenFE}) has no positive primitive solution when $r, s, t\ge 3$.
This conjecture is known as the Beal conjecture, also known as the Mauldin conjecture and the Tijdeman-Zagier conjecture.

We shall say a signature $(r,s,t)$ is \textbf{solved} if all the non-trivial primitive solutions to the generalized Fermat equation $x^r + y^s = z^t$ are known. 
Then each signature with $\chi(r,s,t) = \frac{1}{r}+\frac{1}{s}+\frac{1}{t}-1 > 0$ is solved.

The generalized Fermat equation (\ref{GenFE}) has been solved for many signatures with $\chi \le 0$, some of them are listed in the following Theorem \ref{4-thm:conclusions about the generalized Fermat equation}.

\begin{thm}\label{4-thm:conclusions about the generalized Fermat equation}
For the signatures $(r,s,t)$ listed below, the generalized Fermat equation has been solved, i.e. all the non-trivial primitive solutions to $x^r + y^s = z^t$ are related to the known solutions presented above.

\begin{itemize}
\setlength{\itemsep}{0pt}
\item $(n, n, n), n\ge 3$, cf. Wiles \cite{Wiles}, Taylor-Wiles \cite{Taylor-Wiles}, Mochizuki-Fesenko-Hoshi-Minamide-Porowski\cite{ExpEst}.

\item $(n, n, 2), n\ge 4$; $(n, n, 3), n\ge 3$, cf. Darmon-Merel \cite{Darmon-Merel}, Poonen \cite{Poonen}.

\item $(2,3,n), n\in\{7,8,9,10,15\}$, cf. Poonen-Schaefer-Stoll \cite{Poonen-Schaefer-Stoll}, Bruin \cite{Bruin-16,Bruin-19}, Brown \cite{Brown_case_2_3_10}, Siksek-Stoll \cite{Siksek-Stoll_case_2_3_15}.
Also for $n=11$, which is conditional on generalized Riemann hypothesis, cf. Freitas-Naskr\k{e}cki-Stoll \cite{Freitas-Naskrecki-Stoll-case_2_3_11}.

\item $(2, 4, n), n\ge 4$, cf. Ellenberg \cite{Ellenberg}, Bennett-Ellenberg-Ng \cite{Bennett-Ellenberg-Ng};
$(2, n, 4), n\ge 4$, Corollary of Bennett-Skinner \cite{Bennett-Skinner-8}, Bruin \cite{Bruin-18}.

\item $(2, 6, n), n\ge 3$, cf. Bennett-Chen \cite{Bennett-Chen};
$(2, n, 6), n\ge 3$, cf. Bennett-Chen-Dahmen-Yazdani \cite{Bennett-Chen-Dahmen-Yazdani}.

\item $(2, 2n, 3), 3\le n\le 10^7$ or $n\equiv 5 (\tmod 6)$, cf. Chen \cite{Chen-21}, Dahmen \cite{Dahmen-29}.
\item $(2n, 2n, 5), n\ge 2$, cf. Bennett \cite{Bennett}; $(2, 2p, 5)$, prime $p\ge 17$, $p\equiv 1 (\tmod 4)$, cf. Chen \cite{Chen-22}.

\item $(2, 2n, k), n \ge 2, k \in \{9, 10, 15\}$; $(3,3,2n), (3,6,n), (4,2n,3), n\ge 2$; $(2m, 2n, 3)$, $m\ge 2$ and $n\equiv 3 (\tmod 4)$; $(3, 3p, 2)$, prime $p\equiv 1 (\tmod 8)$, cf. Bennett-Chen-Dahmen-Yazdani \cite{Bennett-Chen-Dahmen-Yazdani-3, Bennett-Chen-Dahmen-Yazdani}.

\item $(3,3,n), 3\le n\le 10^9 $, cf. Chen-Siksek \cite{Chen-Siksek}.

\item $(3,4,5)$, $(5,5,7)$, $(5,7,7)$, $(5,5,19)$, cf. Siksek-Stoll \cite{Siksek-Stoll}, Dahmen-Siksek \cite{Dahmen-Siksek}.

\item $(p,p,q)$, primes $p \ge q\ge 5$; $(p,p,q)$, primes $p \ge 5$, $q > 3\sqrt{p\log_2(p)}$, cf. Bartolomé-Mih\u{a}ilescu \cite{semi_local_approximation_fermat_catalan_popular}.
\end{itemize}
\end{thm}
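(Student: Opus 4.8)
The plan is purely expository: the theorem collects results already established in the literature, so its proof consists in citing, signature by signature, the paper(s) named in the statement and --- where a source phrases its result under a slightly different normalization --- supplying the one-line reduction to the conventions of \S4. No new argument is needed; the content is entirely bookkeeping, and I would present it as an annotated list.

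I would proceed through the rows in order. The classical Fermat case $(n,n,n)$, $n\ge 3$: prime exponents $\ge 5$ from \cite{Wiles,Taylor-Wiles}, exponents $3$ and $4$ elementary, with a second proof in \cite{ExpEst}. Then $(n,n,2)$ for $n\ge 4$ and $(n,n,3)$ for $n\ge 3$ from \cite{Darmon-Merel,Poonen}. The $(2,3,n)$ family: $n=7,8,9$ from \cite{Poonen-Schaefer-Stoll,Bruin-16,Bruin-19}, $n=10$ from \cite{Brown_case_2_3_10}, $n=15$ from \cite{Siksek-Stoll_case_2_3_15}; here I would flag explicitly that the entry $n=11$ rests on \cite{Freitas-Naskrecki-Stoll-case_2_3_11} and is conditional on the generalized Riemann hypothesis, so that this case is quarantined from the unconditional assertions of the paper. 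The remaining families each follow from exactly the cited reference: $(2,4,n)$ and $(2,n,4)$ from \cite{Ellenberg,Bennett-Ellenberg-Ng} and \cite{Bennett-Skinner-8,Bruin-18} (for $(2,n,4)$ one inserts the trivial observation that a fourth power is in particular a square and reduces to the ternary equations with a square on one side treated there); $(2,6,n)$ and $(2,n,6)$ from \cite{Bennett-Chen,Bennett-Chen-Dahmen-Yazdani}; $(2,2n,3)$ for the stated ranges from \cite{Chen-21,Dahmen-29}; $(2n,2n,5)$ and $(2,2p,5)$ from \cite{Bennett,Chen-22}; the long mixed-signature list from \cite{Bennett-Chen-Dahmen-Yazdani-3,Bennett-Chen-Dahmen-Yazdani}; the $(3,3,n)$ computational ranges from \cite{Chen-Siksek}; the sporadic signatures $(3,4,5),(5,5,7),(5,7,7),(5,5,19)$ from \cite{Siksek-Stoll,Dahmen-Siksek}; and the cyclotomic families $(p,p,q)$ with $p\ge q\ge 5$ or $q>3\sqrt{p\log_2(p)}$ from \cite{semi_local_approximation_fermat_catalan_popular}.

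Since the theorem is a compilation there is no genuine hard step; the only matters requiring care are bookkeeping ones. The most delicate is isolating the GRH-conditional entry $(2,3,11)$, so that the later \emph{unconditional} corollaries (notably Corollary~B2 and the verification claims of Corollary~B3) do not silently inherit the hypothesis. A secondary subtlety is that several source papers solve $x^r+y^s=z^t$ only for pairwise coprime $x,y,z$, or only over $\Q$ up to scaling, so one must confirm that ``primitive'' there coincides with $\gcd(x,y,z)=1$ as in \S4 and that the sign and permutation conventions are compatible --- for signatures with an even entry this is automatic, and otherwise one uses that $(x,y,z)\mapsto(-x,-y,-z)$ together with the evident permutations of the triple generates the missing cases. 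Finally one checks that each cited range of exponents is exactly as stated. With these verifications recorded, the theorem follows by assembling the references.
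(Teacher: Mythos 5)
The statement is a survey theorem and the paper supplies no formal \texttt{proof} environment for it; it is justified exactly as you propose, by attributing each row to the cited sources, and then supplements this in a subsequent remark with a more detailed treatment of one row. Your annotated-list plan and your two bookkeeping flags (the GRH-conditional status of $(2,3,11)$, and normalization/primitivity conventions across sources) match the paper's handling and are the right things to single out.

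The one place your proposal falls short of the paper is the $(2,n,4)$ case, and the shortfall is substantive. You gloss it as ``the trivial observation that a fourth power is in particular a square,'' but the naive reduction $x^2 + y^n = z^4 \Rightarrow x^2 + y^n = w^2$ lands you in signature $(2,n,2)$, which is spherical ($\chi = 1/n > 0$) and has infinitely many primitive solutions; no contradiction follows from that alone. What the paper actually does (following a communication from Bennett) is factor $(z^2 - x)(z^2 + x) = y^n$, split into the cases $\gcd(z^2+x, z^2-x) \in \{1,2\}$, and derive the ternary equations $2z^2 = y_1^n + y_2^n$ (handled by Bennett--Skinner, Theorem 1.1) and $z^2 = y_1^n + 2^{n-2} y_2^n$ (handled by Bennett--Skinner, Theorem 1.2, after invoking Mih\u{a}ilescu to rule out $|y_1 y_2| = 1$). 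Moreover, since Bennett--Skinner's theorems apply to prime exponents $\ge 7$, the paper must treat $n = 9$ separately: it reduces to a parametrization of $(2,3,3)$, extracts a subequation $s^3 + 2t^3 = r^3$, and kills it by computing that the associated elliptic curve $Y^2 = X^3 - 27$ has Mordell--Weil rank $0$ with only two rational points. Your proposal omits both the genuine factorization/descent step and the entire $n = 9$ argument, so as written it would not close this case.
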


\begin{tiny-remark}
Note that if a signature $(r,s,t)$ is solved, then $(s,r,t)$ is solved; 
if $(r,s,t)$ is solved and  $2\nmid s$, then $(s,t,r), (t,s,r)$ are solved;
when $\chi(r,s,t)\le 0$, if $(r,s,t)$ is solved and $r',s',t'$ are positive multiples of $r, s, t$ respectively, then $(r', s', t')$ is solved. 
Hence many new solved signatures can be obtained from the solved signatures in Theorem \ref{4-thm:conclusions about the generalized Fermat equation}.
For more about generalized Fermat equations, cf. Bennett-Mih\u{a}ilescu-Siksek \cite{Bennett-Mihilescu-Siksek_survey} and Bennett-Chen-Dahmen-Yazdani \cite{Bennett-Chen-Dahmen-Yazdani}.
\end{tiny-remark}

\begin{tiny-remark}
Most of the results in Theorem \ref{4-thm:conclusions about the generalized Fermat equation} are quoted from the tables in the first section of \cite{Bennett-Chen-Dahmen-Yazdani}, including the case of signatures $(2,n,4), n\ge 4$. As explained by Michael Bennett, the case of signatures $(2,n,4), n\ge 4$ can be proved in the following way.

(1) The signature $(2,4,4)$ is solved by Fermat; Bruin \cite{Bruin-18} covers the signature $(2,5,4)$; Bennett-Chen \cite{Bennett-Chen} covers the signature $(2,6,4)$. Hence it suffices to prove for $n = 9$ or $n$ is a prime number $\ge 7$. 

(2) Suppose that $x,y,z$ are non-zero coprime integers such that $x^2 + y^n = z^4$.
Then we have $(z^2+x)(z^2-x) = y^n$, where $\gcd(z^2+x, z^2-x)$ is $1$ or $2$.

When $\gcd(z^2+x, z^2-x) = 1$, we can write $z^2 + x = y_1^n,\; z^2 - x = y_2^n$, where $y_1,y_2,z$ are non-zero coprime integers. 
Hence we have $2z^2 = y_1^n + y_2^n$, which has no solution for $n\ge 6$ by Bennett-Skinner \cite{Bennett-Skinner-8}, Theorem 1.1.

When $\gcd(z^2+x, z^2-x) = 2$, we can write $z^2 + \epsilon x = 2y_1^n,\; z^2 - \epsilon x = 2^{n-1}y_2^n$, where $y_1,y_2,z$ are non-zero coprime integers, $x,z$ are odd and $\epsilon \in \{ \pm 1 \}$.
Hence we have $z^2 = y_1^n + 2^{n-2}y_2^n$. 
By Catalan's conjecture, we can show that $|y_1y_2| \neq 1$. 
Then when $n\ge 7$ is a prime number, the equation  $z^2 = y_1^n + 2^{n-2}y_2^n$ has no solution by Bennett-Skinner \cite{Bennett-Skinner-8}, Theorem 1.2.

(3) We are now left to prove the case where $n=9$ and $y$ is even.
Since $x^2 - z^4 =  (-y^3)^3$ and $8 \mid y^3$, by comparing to the parametrizations in Cohen \cite{Cohen-GTM240}, Section 14.4.1, 
we can assume that $y^3 = 8st(s^3 - 16t^3)(s^3 + 2t^3)$, where $s,t$ are non-zero coprime integers such that $s$ is odd and $3\nmid(s-t)$. 

Note that $s,t, s^3 - 16t^3, s^3 + 2t^3$ are coprime with each other, except for $\gcd(s^3 - 16t^3, s^3 + 2t^3) \in \{1,3,9\}$. But since their product is a cube, we must have $\gcd(s^3 - 16t^3, s^3 + 2t^3) = 1$, and hence $s,t, s^3 - 16t^3, s^3 + 2t^3$ are cubes.

Write $s^3 + 2t^3 = r^3$, where $r,s,t$ are non-zero coprime integers. Then one can check that $(\frac{6t}{r-s},\frac{9(r+s)}{r-s})$ is a rational point on the elliptic curve $E: Y^2=X^3-27$.
$E$ has Mordell-Weil rank $0$, and only has the rational points corresponding to the point at infinity and $(X,Y)=(3,0)$. Hence we have $(\frac{6t}{r-s},\frac{9(r+s)}{r-s}) = (3,0)$ or $r-s = 0$, thus $(r,s,t) = \pm (1,-1,1), \pm (1,1,0)$. 
But then $y^3 = 8st(s^3 - 16t^3)(s^3 + 2t^3) \in \{0,17\}$, which is impossible. 
\end{tiny-remark}

\subsection{Upper bounds}

Let $r,s,t\ge 2$ be positive integers.
Throughout this subsection, we shall assume that $(x,y,z)$ is a positive primitive solution to the generalized Fermat equation $x^r+y^s=z^t$. 
We shall prove upper bounds for $\log(x^r y^s z^t)$.

Write  $a=x^r, b=y^s, c=z^t$, then $(a,b,c)$ is a triple of positive coprime integers such that $a+b=c$. 
We shall follow the notaion of Lemma \ref{3-lem:ABC-type inequality} and make some assumptions:

$(1)$ $N \defeq |abc|/\gcd(16,abc)$, $h\defeq \log(N).$
For each prime number $l$, $N_l\defeq \prod_{p:l\mid v_p(N)}p^{v_p(N)}$.

$(2)$ $S$ is a finite set of prime numbers $\ge 11$, with cardinality $n\defeq |S|\ge 2$;
$2 \le k \le n$ is an integer, $k(S)$ is the the product of the smallest $k$ numbers in $S$;
write $u_0 \defeq \min\{r,s,t\} \ge 2$, write $p_0$ for the smallest prime number in $S$.
When $u_0 < 4$, suppose that for any $13\notin S$.

$(3)$ For each $l\in S$, suppose that 
\begin{align} \label{4-2-eq:volume1}
    \sum_{p:p\neq l, l\nmid v_p(N)}v_p(N)\cdot \log(p)
    \le (3+\frac{11l+31}{l^2+l-12}) \cdot\log\rad(N) + 3\Vol(l). 
\end{align}
for some real number $\Vol(l) \ge 0$. The value of $\Vol(l)$ can be taken as the value of $\Vol(\mathfrak{R}_l)$ in Proposition \ref{2-prop: the 1st ABC inequality}; or $\Vol(\mathfrak{R}'_l)$ in Remark \ref{2-rmk: remark of local abc-inequalities} when $u_0 \ge 4$, since we have $v_2(abc) =  v_2(x^r y^s z^t) \ge  u_0 \ge 4$.

$(4)$ Write 
\small\begin{align*}
    A\defeq \{p\in S : p\mid N\},
    \; B\defeq \{p\mid N: \exists l\in S,\, l\mid v_p(N) \},
    \; C\defeq \{p\mid N: p \notin (A\cup B)\};
\end{align*}\normalsize
\small\begin{align*}
    N_A \defeq \prod_{p\in A}p^{v_p(N)}, 
\; N_B \defeq \prod_{p\in B}p^{v_p(N)}, 
\; N_C \defeq \prod_{p\in C}p^{v_p(N)};
\end{align*}\normalsize
\small\begin{align*}
    a_1 \defeq \frac{1}{n}\sum_{l\in S}\frac{11l+31}{l^2+l-12},
    \quad a_2 \defeq \frac{3}{n}\sum_{l\in S}\Vol(l),
    \quad a_3 \defeq \sum_{l\in S}\log(l).
\end{align*}\normalsize

\begin{lem} \label{4-lem: prepare for upper bound I}
Proceeding with the above notation, write 
$$r' \defeq \min\{r, s\},\; s' \defeq \max\{r, s\}, 
\; 0\le h_C\defeq  \log(N_C) \le h = \log(N). $$

(i) We have
\begin{align*}
\log\rad(N_C) \le \frac{1}{u_0} h_C +  \frac{4 \log(2)}{u_0} .
\end{align*}

(ii) If $t \le r'$, then 
\begin{align*}
	\log\rad(N_C) \le  \frac{1}{r'} (h_C-h) +(\frac{1}{2t}+\frac{1}{2r'})h + \frac{4 \log(2)}{u_0} .
\end{align*}

(iii) If $t = r'$, then
\begin{align*}
\log\rad(N_C) \le   \frac{1}{s'} (h_C-h) + (\frac{1}{t}+\frac{1}{s'}) \cdot \frac{2th}{3t-1} + \frac{4 \log(2)}{u_0} .
\end{align*}

(iv) If  $t \ge r'$, then 
\begin{align*}
\log\rad(N_C) \le \frac{1}{t}(h_C-h) + \max\{ \frac{1}{2t} + \frac{1}{2r'},  \frac{1}{3}(\frac{1}{r}+\frac{1}{s}+\frac{1}{t})  \} \cdot h + \frac{4\log(2)}{u_0} .
\end{align*}

\end{lem}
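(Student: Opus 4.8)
The plan is to decompose $N_C$ according to which of $a,b,c$ each of its prime divisors divides, turn this into a single inequality carrying a $\log 2$ error, and then feed in the elementary size estimates forced by $a+b=c$. For the decomposition: since $\gcd(x,y,z)=1$, the integers $a=x^r$, $b=y^s$, $c=z^t$ are pairwise coprime, so each prime $p\mid N$ divides exactly one of them; write $N_{C,a},N_{C,b},N_{C,c}$ for the corresponding pairwise coprime factors of $N_C$, so that $N_C=N_{C,a}N_{C,b}N_{C,c}$ and $h_C=\log N_{C,a}+\log N_{C,b}+\log N_{C,c}$. For an odd prime $p\mid a$ one has $v_p(N)=v_p(a)=r\,v_p(x)\ge r$, hence $\log p\le\frac1r v_p(N)\log p$, and likewise with $s,t$ for $b,c$; for $p=2$ one has $v_2(N)=\max\{v_2(abc)-4,0\}$, so if $2\in C$ then $v_2(N)+4=v_2(abc)$ is still a positive multiple of the relevant exponent and the same estimate holds with an added $\frac{4\log 2}{r}$, respectively $\frac{4\log 2}{s}$ or $\frac{4\log 2}{t}$; since $2$ divides only one of $a,b,c$, summing over $p\in C$ yields the master inequality
$$\log\rad(N_C)\ \le\ \frac{\log N_{C,a}}{r}+\frac{\log N_{C,b}}{s}+\frac{\log N_{C,c}}{t}+\frac{4\log 2}{u_0}.$$

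Next I would record the arithmetic inputs. Trivially $\log N_{C,a}\le\log a$, $\log N_{C,b}\le\log b$, $\log N_{C,c}\le\log c$, and $h\le\log(abc)\le h+4\log 2$. Since $c=a+b$ is the largest of the three and $c\le 2\max\{a,b\}$, we have $2\log c\le\log(abc)+\log 2$, hence $\log c\le\frac12 h+\frac52\log 2$; also $\log a,\log b\le\log c$ and $3\log c\ge\log(abc)$. For part (iii) I would use in addition that, after interchanging the roles of $(a,r)$ and $(b,s)$ if necessary — which affects neither the hypothesis nor the conclusion, as both are symmetric in $(a,r)\leftrightarrow(b,s)$ — the numbers $b=y^t$ and $c=z^t$ are both $t$-th powers with $z>y\ge 1$, so $a=z^t-y^t\ge z^{t-1}=c^{(t-1)/t}$; hence $\log a\ge\frac{t-1}{t}\log c\ge\frac{t-1}{2t}\log(bc)$, which together with $\log(bc)=\log(abc)-\log a$ gives $\log(bc)\le\frac{2t}{3t-1}(h+4\log 2)$.

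Part (i) is then immediate from the master inequality and $r,s,t\ge u_0$. For (ii)–(iv) one maximizes the right side of the master inequality over the non-negative numbers $\log N_{C,\bullet}$ subject to their sum being $h_C$ and to the bounds above; this is a small linear program whose value is piecewise linear in $(h_C,h)$, and one checks it equals the asserted expression. For (ii), using $\frac1r,\frac1s\le\frac1{r'}$ the right side is at most $\frac1{r'}h_C+(\frac1t-\frac1{r'})\log N_{C,c}+\frac{4\log 2}{u_0}$, into which one inserts $\log N_{C,c}\le\frac12 h+\frac52\log 2$. For (iii), one bounds $\log N_{C,b}+\log N_{C,c}$ by $\min\{h_C,\frac{2t}{3t-1}(h+4\log 2)\}$ and assigns the residual weight $h_C$ to the $a$-term. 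For (iv), the maximum is attained either when $\log N_{C,c}$ saturates $\log c\le\frac12 h$, giving the branch $\frac1{2t}+\frac1{2r'}$, or in the balanced configuration $\log a\approx\log b\approx\log c\approx h/3$, giving the branch $\frac13(\frac1r+\frac1s+\frac1t)$, whence the stated maximum; here one also splits on the signs of $\frac1r-\frac1t$ and $\frac1s-\frac1t$.

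The main difficulty lies not in the optimization but in controlling the $\log 2$ terms: for each sub-case — which of $a,b,c$ is even, whether $2\in C$, and which competing linear bound is active — one must verify that the accumulated archimedean and prime-$2$ corrections remain within the single budget $\frac{4\log 2}{u_0}$. This works out because precisely when the prime-$2$ correction in the master inequality is as large as $\frac{4\log 2}{t}$ — namely when $2$ divides the $t$-th power among $a,b,c$ and $v_2\ge 5$ — the corresponding factor $N_{C,\bullet}$ is simultaneously smaller by exactly $4\log 2$, so the two effects cancel; in the remaining sub-cases there is slack. Everything else is routine arithmetic.
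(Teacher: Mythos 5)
The proposal is correct and follows essentially the same route as the paper: it decomposes $N_C$ into the contributions from $a,b,c$ (which is exactly the paper's $x_1^r y_1^s z_1^t$), uses the same size estimates ($\log c<\tfrac12\log(abc)$, the $(t-1)/(2t)$ bound for (iii), and the $\ell_a,\ell_b$ case split for (iv)), and optimizes the resulting linear form. The only real difference is bookkeeping: you try to fold the $p=2$ and archimedean corrections into a single additive budget $\tfrac{4\log 2}{u_0}$ and then argue a case-by-case cancellation, whereas the paper simply carries $\log(16N_C)\le h_C+4\log 2$ and $\log(16N)\le h+4\log 2$ through each estimate, which makes the error control automatic; your route requires the cancellation check you only sketch (and your constant $\tfrac52\log 2$ in $\log c\le\tfrac12 h+\tfrac52\log 2$ is slightly lossier than the paper's $2\log 2$, leaving no slack in the sub-case where $2$ divides the non-$t$-power factor), so you would need to spell that verification out in full.
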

\begin{proof}
When $(a,b,c)$ is a permutation of $(1,8,9)$, 
we have $u_0 = r'=2$ and $\log\rad(N_C) \le \log(3) < 2 \log(2) = \frac{4\log(2)}{u_0}$, thus the lemma is true. 
Now assume that  $(a,b,c)$ is not a permutation of $(1,8,9)$, 
and assume without loss of generality that $r \le s$, then $r' = r$, $s' = s$.

Since $a+b=c$ and $(a,b,c)$ is not a permutation of $(1,8,9)$, 
we have $a, b, c, x, y, z \ge 2$ by Catalan's conjecture (now Mih\u{a}ilescu's theorem, cf. \cite{Mihilescu2004PrimaryCU}).
Hence \begin{align} \label{4-1-eq1}
 a,b < c < (abc)^{1/2} \le (16 N)^{1/2}.
\end{align}

Write  $v \defeq \max\{v_2(abc),4\}$, then $abc = 2^v N$.
Write  $N'_C \defeq 2^v N_C \le 16 N_C$ if $2 \mid N_C$, 
and write $N'_C \defeq N_C$ if $2 \nmid N_C$. 
Then for each $p\mid N_C$, we have $v_p(N'_C) = v_p(abc)$.
Hence we can assume that $N_C \mid N'_C = x_1^r y_1^s z_1^t \le 16 N_C $ for some $x_1\mid x, y_1\mid y, z_1\mid z$. Then 
\begin{align} \label{4-1-eq2}
\log\rad(N_C) \le \log(x_1y_1z_1), \; \log(N'_C) \le \log(16N_C) = h_C + 4\log(2) .
\end{align}

For assertion (i), by (\ref{4-1-eq2})  we have
\begin{align*}
\log\rad(N_C) \le \log(x_1y_1z_1)  
\le \frac{1}{u_0 } \log(x_1^r y_1^s z_1^t)
= \frac{1}{u_0 } \log(N'_C) 
\le \frac{1}{u_0} h_C +  \frac{4 \log(2)}{u_0} .
\end{align*}

For assertion (ii), since $t\le r' = r \le s$, we have $u_0 = t$. 
Then by (\ref{4-1-eq1}), (\ref{4-1-eq2})  we have
\begin{align*}
    \log\rad(N_C)& \le \log(x_1y_1z_1) 
    = \frac{1}{r}\log(x_1^r y_1^r z_1^t)
    + (\frac{1}{t}-\frac{1}{r})\log(z_1^t)
    \\&\le \frac{1}{r}\log(N'_C) + (\frac{1}{t}-\frac{1}{r})\log(c)
    \le \frac{1}{r}\log(16N_C)+(\frac{1}{2t}-\frac{1}{2r})\log(16N)
    \\&\le \frac{1}{r} h_C +(\frac{1}{2t}-\frac{1}{2r})\log(N)+\frac{4 \log(2)}{u_0}.
\end{align*}

For assersion (iii), since $t = r' = r \le s$, we have $u_0 = t$. 
Since $z = c^{1/t} > a^{1/t} = x > 1$, we have $2 \le x \le z-1$, and
\begin{align} \label{4-1-eq3}
	b = y^s = z^t - x^t \ge z^t - (z-1)^t > z^{t-1} > (xz)^{(t-1)/2}  = (ac)^{(t-1)/(2t)}.
\end{align} 
Then
\begin{align*}
	\log(x_1^r z_1^t) = \log(ac) < \frac{2t}{3t-1} \log(abc) \le  \frac{2t}{3t-1} \log(16 N).
\end{align*} 
Hence we have
\begin{align*}
\log\rad(N_C)& \le \log(x_1 y_ 1 z_1) 
= \frac{1}{s}\log(x_1^t y_1^s z_1^t) + (\frac{1}{t}-\frac{1}{s}) \log(x_1^t z_1^t)
\\&\le \frac{1}{s}\log(16 N_C) + (\frac{1}{t}-\frac{1}{s}) \cdot \frac{2t}{3t-1} \log(16 N)
\\&\le \frac{1}{s} h_C + (\frac{1}{t}-\frac{1}{s}) \cdot \frac{2th}{3t-1} + \frac{4 \log(2)}{u_0}.
\end{align*} 

For assertion (iv), let $\ell_a = \log(a) / \log(abc)$, $\ell_b = \log(b) / \log(abc)$, $\ell_c = \log(c) / \log(abc)$. Then $0 \le \ell_a,\ell_b \le \ell_c$ and $\ell_a + \ell_b + \ell_c = 1$. Hence $\ell_a, \ell_b \le \ell_c \le \frac{1}{2}$ and  $\ell_a + \ell_b \le \frac{2}{3}$.
Consider about the upper bound of $(\frac{1}{r}-\frac{1}{t})\ell_a + (\frac{1}{s}-\frac{1}{t})\ell_b$. 

If $\ell_a \le \ell_b$, then since $\frac{1}{r}-\frac{1}{t} > \frac{1}{s}-\frac{1}{t}$, 
we have $(\frac{1}{r}-\frac{1}{t})\ell_a + (\frac{1}{s}-\frac{1}{t})\ell_b \le ((\frac{1}{r}-\frac{1}{t}) + (\frac{1}{s}-\frac{1}{t}))\cdot \frac{\ell_a + \ell_b}{2} \le \frac{1}{3}(\frac{1}{r}+\frac{1}{s}-\frac{2}{t})$. 

Case 2: $\ell_a \ge \ell_b$. Then we have $\ell_b \le 1-2\ell_a$ and $\frac{1}{3}\le \ell_a \le \frac{1}{2}$. Hence $(\frac{1}{r}-\frac{1}{t})\ell_a + (\frac{1}{s}-\frac{1}{t})\ell_b \le
(\frac{1}{r}-\frac{1}{t})\ell_a + (\frac{1}{s}-\frac{1}{t})(1-2\ell_a)
= \frac{1}{3}(\frac{1}{r}+\frac{1}{s}-\frac{2}{t}) + (\frac{1}{r}+\frac{1}{t}-\frac{2}{s})(\ell_a-\frac{1}{3})$. 
If $ \frac{1}{r}+\frac{1}{t} \le \frac{2}{s}$, by taking $\ell_a = \frac{1}{3}$, we can get $(\frac{1}{r}-\frac{1}{t})\ell_a + (\frac{1}{s}-\frac{1}{t})\ell_b \le \frac{1}{3}(\frac{1}{r}+\frac{1}{s}-\frac{2}{t})$;
if $ \frac{1}{r}+\frac{1}{t} \ge \frac{2}{s}$, by taking $\ell_a = \frac{1}{2}$, we can get $(\frac{1}{r}-\frac{1}{t})\ell_a + (\frac{1}{s}-\frac{1}{t})\ell_b \le \frac{1}{2r}-\frac{1}{2t}$.

In conclusion, we have 
\begin{align*}
(\frac{1}{r}-\frac{1}{t})\ell_a + (\frac{1}{s}-\frac{1}{t})\ell_b 
\le \max\{\frac{1}{2r}-\frac{1}{2t}, \frac{1}{3}(\frac{1}{r}+\frac{1}{s}-\frac{2}{t})\} .
\end{align*}
Hence
\small \begin{align*}
\log\rad(N_C)& \le \log(x_1 y_ 1 z_1) 
= \frac{1}{t}\log(x_1^t y_1^s z_1^t) + (\frac{1}{r}-\frac{1}{t})\log(x_1^r) + (\frac{1}{s}-\frac{1}{t})\log(y_1^s)  
\\& \le \frac{1}{t}\log(16N_C)  + ( (\frac{1}{r}-\frac{1}{t})\ell_a + (\frac{1}{s}-\frac{1}{t})\ell_b ) \log(abc)
\\&\le \frac{1}{t}(h_C-h) + \max\{ \frac{1}{2r}+\frac{1}{2t},  \frac{1}{3}(\frac{1}{r}+\frac{1}{s}+\frac{1}{t})  \} + \frac{4\log(2)}{u_0} .
\end{align*} \normalsize

\end{proof}

\begin{tiny-remark} \label{4-2-rmk1}
Write $\overline{u} \defeq \frac{1}{3}(\frac{1}{r}+\frac{1}{s}+\frac{1}{t})$.
When $\frac{2}{s'}\le \frac{1}{r'}+\frac{1}{t}$, we have $\overline{u} \le \frac{1}{2r'}+\frac{1}{2t}$, hence by Lemma \ref{4-lem: prepare for upper bound I}, (ii) and (iv), we have 
\begin{align*}
	\log\rad(N_C) \le  \frac{h_C-h}{\max\{r',t\} } + (\frac{1}{2t}+\frac{1}{2r'})h + \frac{4 \log(2)}{u_0} .
\end{align*}
When $\frac{2}{s'}\ge \frac{1}{r'}+\frac{1}{t}$, we have $r'\le t$, and $\frac{1}{2r'}+\frac{1}{2t} \le \overline{u} \le \frac{2}{3r'}+\frac{1}{3t}$, hence by Lemma \ref{4-lem: prepare for upper bound I}, (ii) and (iv), we have 
\begin{align*}
	\log\rad(N_C) \le  \frac{h_C-h}{t } + (\frac{2}{3r'}+\frac{1}{3t}) h + \frac{4 \log(2)}{u_0} .
\end{align*}

\end{tiny-remark}


\begin{lem}\label{4-lem:upper bound I}
Proceeding with the above notation, and suppose that $k(S) > h/\log(2)$.
Recall that $r' = \min\{r, s\}$, $s' = \max\{r, s\}$, $u_0 = \min\{r, s, t\}$.
We shall define $b_1$ in any of the following cases:
\begin{itemize}
\item[(a)] When $\frac{2}{s'}\le \frac{1}{r'}+\frac{1}{t}$, write $b_1 \defeq\max\{ \frac{k}{n} + \frac{3+a_1}{p_0}, \,
\frac{1}{2}(\frac{k}{n} + \frac{3+a_1}{p_0}) + \frac{3+a_1}{2 u_0 }, \,
(3+a_1) \cdot (\frac{1}{2r'} + \frac{1}{2t}) \}.$

\item[(b)] When $\frac{2}{s'}\ge \frac{1}{r'}+\frac{1}{t}$, write $b_1 \defeq\max\{ \frac{k}{n} + \frac{3+a_1}{p_0}, \,
\frac{1}{3}(\frac{k}{n} + \frac{3+a_1}{p_0})+\frac{2(3+a_1)}{3r'}, \, (3+a_1) \cdot (\frac{2}{3r'}+\frac{1}{3t}) \}.$

\item[(c)] When $t = r'$, write $b_1 \defeq \max\{ \frac{k}{n} + \frac{3+a_1}{p_0}, \,
(\frac{k}{n} + \frac{3+a_1}{p_0}) \cdot \frac{t-1}{3t-1} + \frac{2(3+a_1)}{3t-1}, \,
 (3+a_1)\cdot \frac{2s' + (t-1) }{s'(3t-1)}  \}. $
\end{itemize}
We shall write $b_2 \defeq a_2 + (3+a_1) \cdot (a_3+\frac{4\log(2)}{u_0}).$ 
Then we have
\begin{align} \label{4-2-eq0}
	h \le b_1 \cdot h + b_2 .
\end{align}
Hence if $b_1 < 1$ and  $k(S)\cdot \log(2) > \frac{b_2}{1-b_1},$
then $h=\log(N)$ cannot belong to the interval 
$$(\frac{b_2}{1-b_1}, k(S)\cdot \log(2) ) .$$
\end{lem}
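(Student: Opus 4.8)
The plan is to substitute the refined abc-type inequality of Lemma~\ref{3-lem:ABC-type inequality}, (iii) into the two affine estimates for $\log\rad(N_C)$ coming from Lemma~\ref{4-lem: prepare for upper bound I}, (i) and from Remark~\ref{4-2-rmk1} (respectively Lemma~\ref{4-lem: prepare for upper bound I}, (iii) in case (c)), and then to maximise the resulting expression over the a priori unknown $h_C\defeq\log(N_C)\in[0,h]$. First, the standing assumptions of this subsection together with the hypothesis $k(S)>h/\log(2)$ ensure that all hypotheses of Lemma~\ref{3-lem:ABC-type inequality} hold, so its part (iii) gives $h\le (3+a_1)\log\rad(N_C)+\bigl(\tfrac{k}{n}+\tfrac{3+a_1}{p_0}\bigr)(h-h_C)+a_2+(3+a_1)a_3$. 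Writing $Q\defeq 3+a_1$, $P\defeq\tfrac{k}{n}+\tfrac{3+a_1}{p_0}$ and $\gamma\defeq\tfrac{4\log(2)}{u_0}$, so that $b_2=a_2+Q(a_3+\gamma)$, this reads $h\le Q\log\rad(N_C)+P(h-h_C)+a_2+Qa_3$.

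Next I will use two upper bounds for $\log\rad(N_C)$: bound (I), $\log\rad(N_C)\le\tfrac1{u_0}h_C+\gamma$, from Lemma~\ref{4-lem: prepare for upper bound I}, (i); and bound (II), $\log\rad(N_C)\le\tfrac1M(h_C-h)+\beta h+\gamma$, where in case (a) one takes $M=\max\{r',t\}$ and $\beta=\tfrac1{2r'}+\tfrac1{2t}$ (Remark~\ref{4-2-rmk1}), in case (b) one takes $M=t$ and $\beta=\tfrac2{3r'}+\tfrac1{3t}$ (Remark~\ref{4-2-rmk1}, valid since $r'\le t$ in that case), and in case (c) one takes $M=s'$ and $\beta=\tfrac{2s'+t-1}{s'(3t-1)}$ (Lemma~\ref{4-lem: prepare for upper bound I}, (iii), in the form obtained in its proof). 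One checks $M\ge u_0$, $\tfrac1{u_0}\ge\tfrac1M$ and $\tfrac1M\le\beta\le\tfrac1{u_0}$, so bound (I) is the sharper of the two for small $h_C$ and bound (II) for large $h_C$, the two agreeing at $h_C^{*}=\tfrac{\beta-1/M}{1/u_0-1/M}\,h$, which a short computation evaluates to $\tfrac12 h$, $\tfrac23 h$, and $\tfrac{2t}{3t-1}h$ in cases (a), (b), (c) respectively; in particular $0\le h_C^{*}\le h$.

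The crux is the optimisation over $h_C$. Bounding $\log\rad(N_C)$ by the pointwise minimum of (I) and (II), the right-hand side of the inequality $h\le Q\log\rad(N_C)+P(h-h_C)+a_2+Qa_3$ becomes $Q\min\{(\mathrm I),(\mathrm{II})\}(h_C)-Ph_C+Ph+a_2+Qa_3$, a concave piecewise-linear function of $h_C$ whose maximum over $[0,h]$ is attained at $h_C=0$, at $h_C=h$, or at $h_C=h_C^{*}$; adding the $Ph$ term, the coefficient of $h$ at these three points works out to $P$ (at $h_C=0$, via bound (I)), to $Q\beta$ (at $h_C=h$, via bound (II)) --- i.e.\ to $Q(\tfrac1{2r'}+\tfrac1{2t})$, $Q(\tfrac2{3r'}+\tfrac1{3t})$, $Q\tfrac{2s'+t-1}{s'(3t-1)}$ in cases (a), (b), (c) --- and to $P(1-c)+\tfrac{Qc}{u_0}$ at $h_C^{*}=ch$, which, recalling $u_0=r'$ in case (b) and $u_0=t$ in case (c), equals $\tfrac P2+\tfrac Q{2u_0}$, $\tfrac P3+\tfrac{2Q}{3r'}$, and $\tfrac{t-1}{3t-1}P+\tfrac2{3t-1}Q$ respectively. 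The additive constant is $Q\gamma+a_2+Qa_3=b_2$ throughout, and the three possible coefficients of $h$ are exactly the three entries of the maximum defining $b_1$ in the corresponding case, so $h\le b_1 h+b_2$, which is \eqref{4-2-eq0}. (If $M=u_0$ --- e.g.\ $r'=t$ in case (a) --- bounds (I) and (II) coincide, there is no interior breakpoint, and the two endpoints alone give the same conclusion.) Finally, when $b_1<1$ this rearranges to $h\le\tfrac{b_2}{1-b_1}$, and since $k(S)>h/\log(2)$ forces $h<k(S)\log(2)$, the value $h$ lies outside the open interval $\bigl(\tfrac{b_2}{1-b_1},\,k(S)\log(2)\bigr)$ --- any $h$ inside it would satisfy all hypotheses, hence $h\le\tfrac{b_2}{1-b_1}$, a contradiction.

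The only genuine work is this optimisation step: recognising that one must optimise jointly over $h_C\in[0,h]$ and over the choice between the estimates (I) and (II) for $\log\rad(N_C)$, and that the value at the breakpoint $h_C^{*}$ is exactly what produces the middle entry of the maximum defining $b_1$. Everything else is a mechanical combination of Lemma~\ref{3-lem:ABC-type inequality}, Lemma~\ref{4-lem: prepare for upper bound I}, and Remark~\ref{4-2-rmk1}, plus the elementary rearrangement in the last sentence.
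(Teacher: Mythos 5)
Your proof is correct and takes essentially the same route as the paper: substitute Lemma~\ref{3-lem:ABC-type inequality}~(iii), combine with the two affine bounds on $\log\rad(N_C)$ (Lemma~\ref{4-lem: prepare for upper bound I}~(i) and Remark~\ref{4-2-rmk1} or Lemma~\ref{4-lem: prepare for upper bound I}~(iii)), and maximise the resulting piecewise-linear function of $h_C$ over $[0,h]$, noting that the breakpoint produces the middle entry of the max defining $b_1$. The paper writes out only case~(a), where the breakpoint is $h/2$, and waves at the other cases as "similar"; your uniform pointwise-minimum phrasing and the explicit breakpoints $\tfrac23 h$ and $\tfrac{2t}{3t-1}h$ make the "similarly" precise. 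One genuinely useful catch on your part: you noticed that in case~(c) one must use Lemma~\ref{4-lem: prepare for upper bound I}~(iii) in the sharper form $\log\rad(N_C)\le \tfrac1{s'}h_C + (\tfrac1t-\tfrac1{s'})\tfrac{2t}{3t-1}h+\gamma$ that its proof actually establishes (giving $\beta=\tfrac{2s'+t-1}{s'(3t-1)}$), rather than the slightly weaker form displayed in the lemma's statement (which would give $\beta'=\tfrac{2s'+2t}{s'(3t-1)}$ and a larger third entry than the stated $b_1$); this is necessary for the claimed $b_1$ and is a real subtlety the paper's statement of Lemma~\ref{4-lem: prepare for upper bound I}~(iii) obscures.
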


\begin{proof}
Since $k(S) > h/\log(2)$, we can make use of Lemma \ref{3-lem:ABC-type inequality}.
We shall only prove the case when $b_1$ is defined in case (a),
Other cases can be proved similarly.

Recall that we have $$0\le h_C\defeq  \log(N_C) \le h = \log(N). $$
By Lemma \ref{3-lem:ABC-type inequality} (iii),
we have 
\begin{align} \label{4-2-eq1}
	h \le (3+a_1)\cdot\log\rad(N_C) + (\frac{k}{n}+\frac{3+a_1}{p_0})(h- h_C ) + a_2 + (3+a_1)a_3.
\end{align}

When $0 \le h_C \le \frac{1}{2}h$, 
by (\ref{4-2-eq1}) and  Lemma  \ref{4-lem: prepare for upper bound I} (i),
we have
\begin{align} \label{4-2-eq2}
	h \le (3+a_1)\cdot \frac{h_C}{u_0} + (\frac{k}{n}+\frac{3+a_1}{p_0})(h- h_C ) + b_2.
\end{align}
The right hand of (\ref{4-2-eq2}) is linear in $h_C$, 
reaching its maximal value at $h_C=0$ or $h_C=\frac{1}{2}h$.
Hence by (\ref{4-2-eq2}) we have
\begin{align*}
	h \le \max\{ \frac{k}{n} + \frac{3+a_1}{p_0}, \frac{k}{2n} + \frac{3+a_1}{2p_0} + \frac{3+a_1}{2u_0} \}\cdot h + b_2 \le b_1\cdot h + b_2.
\end{align*}

When $\frac{1}{2}h \le h_C \le h$, by (\ref{4-2-eq1}) and Remark \ref{4-2-rmk1},
we have
\begin{align} \label{4-2-eq3}
h \le (3+a_1)( \frac{h_c-h}{\max\{r',t\}} + (\frac{1}{2t}+\frac{1}{2r'})h )
+  (\frac{k}{n}+\frac{3+a_1}{p_0})(h- h_C ) + b_2.
\end{align}
The right hand of (\ref{4-2-eq3}) is linear in $h_C$, 
reaching its maximal value at $h_C=\frac{1}{2}h$ or $h_C = h$.
Hence by (\ref{4-2-eq3}) we have
\begin{align*}
	h \le \max\{\frac{k}{2n} + \frac{3+a_1}{2p_0} + \frac{3+a_1}{2u_0},
	\frac{3+a_1}{2t} + \frac{3+a_1}{2r'}  \}\cdot h + b_2 \le b_1\cdot h + b_2.
\end{align*}

Hence when $b_1$ is defined in case (a), we must have (\ref{4-2-eq0}),
and then the lemma follows.
\end{proof}

\begin{thm} \label{4-thm-bounds}
Let $r,s,t\ge 3$ be positive integers.
Write
\begin{align*}
S(r,s,t) \defeq \{ (x,y,z)\in \Z_{\ge 1}^3  :  x^r + y^s = z^t,\, \gcd(x,y,z) = 1 \}
\end{align*}
for the finite set of positive primitive solutions to the generalized Fermat equation with signature $(r,s,t)$. 
Define
\begin{align*}
f(r,s,t) \defeq \sup\limits_{(x,y,z) \in S} \log(x^r y^s z^t) \in  \R_{\ge 0},
\end{align*}
where we shall write $f(r,s,t) = 0$ if $S(r,s,t)$ is an empty set.

Then the explicit upper bounds for $f(r,s,t)$ are presented in Table \ref{3-table: Fermat upper bound 1}.
In particular, we have $f(r,s,t) \le 573$ for $r,s,t\ge 8$;
$f(r,s,t) \le 907$ for $r,s,t\ge 5$;
$f(r,s,t) \le 2283$ for $r,s,t\ge 4$;
$f(r,s,t) \le 14750$ for $\min\{r,s\} \ge  4$ or $t\ge 4$;
and $f(r,s,t) \le 24626$ for $r,s,t\ge 3$.
\begin{table}[ht]
\centering
\caption{Upper bounds for $f(r,s,t)$}
\label{3-table: Fermat upper bound 1}
\small\begin{tabular}{|l|l|l|l|}
\hline
$(\min\{r,s\}, t)$ & Bounds &
$\min\{r,s,t\}$ & Bounds \\ 
\hline      
$(3,3)$ & $24626$ & $4$ & $2283$ \\
$(3,4)$ & $14750$ &  $5$& $907$ \\
$(3,n)$, $n\ge 5$ & $6648$  &  $6$& $697$ \\
$(4,3)$ & $7254$ & $7$& $635$ \\
$(n,3)$, $n\ge 5$ &  $3406$   & $\ge 8$& $573$ \\
\hline
\end{tabular}\normalsize
\end{table}

\end{thm}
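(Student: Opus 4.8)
The plan is to specialise the machinery of Lemmas \ref{3-lem:ABC-type inequality}, \ref{4-lem: prepare for upper bound I} and \ref{4-lem:upper bound I}. Given a positive primitive solution $(x,y,z)$ of $x^r+y^s=z^t$, put $a=x^r$, $b=y^s$, $c=z^t$, so that $(a,b,c)$ is a triple of positive coprime integers with $a+b=c$; set $N=|abc|/\gcd(16,abc)$ and $h=\log N$. A naive application of the abc inequality of Theorem \ref{3-thm:ABC-type inequality} is useless here: since $r,s,t\ge 3$ one has $\log\rad(abc)=\log\rad(xyz)\le\tfrac13\log(x^ry^sz^t)$, and the factor $3$ in that inequality exactly cancels this, leaving no information. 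The point is instead to feed the sharper local input of Proposition \ref{2-prop: the 1st ABC inequality} --- which supplies, for every prime $l\ge 11$ (with $l\ne 13$ unless $\min\{r,s,t\}\ge 4$, in which case $16\mid abc$ and one may use the smaller dataset $\mathfrak R'_l$ of Remark \ref{2-rmk: remark of local abc-inequalities}), the hypothesis (\ref{3-1-eq1}) with the explicit volume $\Vol(l)=f_2(l)$ --- into Lemma \ref{4-lem:upper bound I}. That lemma then yields $h\le b_1h+b_2$, so the whole argument comes down to choosing a finite set $S$ of such primes and an integer $k$ for which $b_1<1$, and then certifying $h<k(S)\log 2$.

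The heart of the proof is the verification that $b_1<1$, which is where the distinction between the regimes of Table \ref{3-table: Fermat upper bound 1} --- governed by the pair $(\min\{r,s\},t)$ and by $\min\{r,s,t\}$ --- enters. Writing $r'=\min\{r,s\}$, $s'=\max\{r,s\}$, the quantities $a_1=\tfrac1n\sum_{l\in S}\tfrac{11l+31}{l^2+l-12}$ and $\tfrac kn+\tfrac{3+a_1}{p_0}$ can be made arbitrarily small by taking $S$ to consist of large primes with $n=|S|$ large relative to $k$, so that $b_1$ is dominated by a term of the shape $(3+a_1)\cdot\gamma(r,s,t)$, where, by Lemma \ref{4-lem: prepare for upper bound I} and Remark \ref{4-2-rmk1} (whose proofs use $a=x^r$, $b=y^s$, $c=z^t$, the bounds $a,b<c<(16N)^{1/2}$, and Mih\u{a}ilescu's theorem to exclude $(|a|,|b|,|c|)$ being a permutation of $(1,8,9)$), $\gamma(r,s,t)$ is the smallest applicable one among $\tfrac1{2r'}+\tfrac1{2t}$, $\tfrac2{3r'}+\tfrac1{3t}$ and $\tfrac{2s'+t-1}{s'(3t-1)}$. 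A short case analysis shows $\gamma(r,s,t)<\tfrac13$ whenever $(r,s,t)$ is not a permutation of $(3,3,3)$, hence $(3+a_1)\gamma(r,s,t)<1$ for $S$ chosen appropriately; the margin $1-b_1$ shrinks (and so $b_2/(1-b_1)$ grows) as $\min\{r,s\}$ and $t$ decrease, which is precisely why the bound deteriorates from $573$ (all exponents $\ge 8$) through $907$, $2283$, $14750$ to $24626$ (the pair $(3,3)$), and why the regime $\min\{r,s,t\}\ge 4$ --- where $16\mid abc$ permits $\mathfrak R'_l$, hence a smaller $b_2$ --- is much better than the general one.

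With $b_1<1$ secured for an appropriate $(S,k)$, I would iterate Lemma \ref{4-lem:upper bound I}: each admissible pair with $k(S)\log 2>\tfrac{b_2}{1-b_1}$ excludes $h$ from the open interval $\bigl(\tfrac{b_2}{1-b_1},\,k(S)\log 2\bigr)$. By Darmon-Granville, $S(r,s,t)$ is finite for every hyperbolic signature, so $h$ is a priori finite; taking a finite increasing chain of sets $S^{(1)}\subset S^{(2)}\subset\cdots$ (enlarging $k$ to push the right endpoints above any prescribed level, then enlarging $n$ so consecutive intervals overlap) forces $h$ below the left endpoint attached to the cheapest choice, i.e. $h\le\tfrac{b_2}{1-b_1}$ for the optimal small $S$. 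Since $N=|abc|/\gcd(16,abc)$, one has $\log(x^ry^sz^t)=\log|abc|\le h+4\log 2$, so a numerical optimisation of $S$ and $k$ in each regime (performed in \cite{code_of_zpzhou}) produces the explicit constants of Table \ref{3-table: Fermat upper bound 1}, whence in particular the five bounds $573,907,2283,14750,24626$. The single Euclidean signature $(3,3,3)$ (and its permutations), for which $b_1\ge 1$ unavoidably, is handled instead by Euler's classical theorem, giving $f(3,3,3)=0$, which lies below every bound in the table.

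The main obstacle I expect is exactly the case analysis of the second paragraph: certifying $b_1<1$ and optimising the resulting constant in the delicate regimes with $\min\{r,s\}=3$, where $(3+a_1)\gamma(r,s,t)$ sits just below $1$ and one must drive $a_1$, $k/n$ and $1/p_0$ down simultaneously while keeping $b_2/(1-b_1)$ finite and as small as possible; the organisation of the chain of sets and the bookkeeping relating $h$ to $\log(x^ry^sz^t)$ are comparatively routine.
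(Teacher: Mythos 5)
Your proposal has a genuine gap. You dismiss the direct application of Theorem \ref{3-thm:ABC-type inequality} as ``useless'', reasoning that $\log\rad(abc)=\log\rad(xyz)\le\tfrac13 h$ exactly cancels the factor $3$. But the paper's proof hinges precisely on refining this to the strict inequality $\log\rad(abc)<\tfrac{5}{16}h$ once $h\ge\log 4$ (inequality (\ref{4-3-eq1})), using Mih\u{a}ilescu's theorem to get $x,y,z\ge 2$ and a three-way case analysis that exploits the constraint $(r,s,t)\ne(3,3,3)$ (Euler), so that at least one exponent is $\ge 4$. With $\tfrac{5}{16}<\tfrac13$, the factor $3$ does \emph{not} cancel, and Theorem \ref{3-thm:ABC-type inequality} yields $h\le\tfrac{15}{16}h+9\sqrt{h\log h}$, hence the explicit a priori bound $h<10^6$. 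This bound is indispensable: it turns the subsequent optimisation over $(S,k)$ in Lemma \ref{4-lem:upper bound I} into a finite verification.

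Your proposed substitute --- invoking Darmon--Granville to assert that $h$ is finite --- does not close this gap. Darmon--Granville is ineffective: it gives no numerical bound on $h$. The chain-of-intervals bootstrap you sketch, with $S^{(1)}\subset S^{(2)}\subset\cdots$, would have to cover $\bigl(\tfrac{b_2^{(1)}}{1-b_1^{(1)}},\infty\bigr)$, which requires either the explicit a priori bound you dismissed or an infinite (hence symbolic, not computational) verification of the overlap condition $\tfrac{b_2^{(i+1)}}{1-b_1^{(i+1)}}<k(S^{(i)})\log 2$ for every $i$; a ``finite increasing chain'' cannot do this for an $h$ of unknown size. The paper's structure --- first establish $h<10^6$ unconditionally via (\ref{4-3-eq1}) and Theorem \ref{3-thm:ABC-type inequality}, then certify the table entries by a finite computation in \cite{code_of_zpzhou} over the range $h<10^6$ --- is what makes the argument close. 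Apart from this point, your account of the roles of Lemmas \ref{3-lem:ABC-type inequality}, \ref{4-lem: prepare for upper bound I} and \ref{4-lem:upper bound I}, the use of $\mathfrak{R}'_l$ when $16\mid abc$, and the reason the constant deteriorates as $\min\{r,s\}$ and $t$ decrease, all match the paper.
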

\begin{proof}
Suppose that $S(r,s,t) \neq \emptyset$, and let $(x,y,z) \in S(r,s,t)$ be a positive primitive solution to $x^r+y^s=z^t$.
Write  $(a,b,c) \defeq (x^r, y^s, z^t)$, $h \defeq \log(x^ry^sz^t) = \log(abc)$,
then $a + b = c$ and $\gcd(a,b,c) = 1$.

We shall prove that
\begin{align} \label{4-3-eq1}
    \log\rad(abc) \le \log(xyz) < \frac{5}{16}\cdot h, \quad \text{for} \; h \ge \log(4).
\end{align}
Assume without loss of generality that $s\ge r$. Then since $(r,s,t)\neq (3,3,3)$, which is proved by Euler, we have $t \ge 4$; or $s\ge r\ge 4$; or $s \ge 4$, $r = t = 3$.

When $t \ge 4$, since $c > a,b$, we have 
$abc > a^{\frac{12}{11} }  b^{\frac{12}{11} }  c^{\frac{9}{11} } = x^{\frac{12r}{11} }  y^{\frac{12s}{11} }  z^{\frac{9t}{11} } \ge (xyz)^{\frac{36}{11} } $.
Hence $\log\rad(abc) \le \log(xyz) < \frac{11}{36}\log(abc) = \frac{11}{36}h <  \frac{5}{16} h.$

When $s \ge r \ge 4$, since $2ab \ge c$ by $a+b=c$, we have 
$2^{\frac{1}{7}}abc > a^{\frac{6}{7} }  b^{\frac{6}{7} }  c^{\frac{8}{7} } = x^{\frac{6r}{7} }  y^{\frac{6s}{7} }  z^{\frac{8t}{7} } \ge (xyz)^{\frac{24}{7} } $.
Hence when $h \ge \log(4)$, we have $\log\rad(abc) \le \log(xyz) < \frac{7}{24}\log(abc)+\frac{\log(2)}{24} = \frac{7}{24}h + \frac{\log(2)}{24} \le \frac{5}{16} h.$

When $s\ge 4$, $r = t = 3$, write $\lambda = \frac{s-3}{s+1} \ge \frac{1}{5}$ by $s\ge 4$.
Since $b = y^s > z^2 > xz$ by (\ref{4-1-eq3}), we have 
$ abc = x^3 y^s z^3 > x^{3+\lambda} y^{s-s\lambda} z^{3+\lambda} = (xyz)^{3+\lambda} \ge (xyz)^{\frac{16}{5}}$.
Hence $\log\rad(abc) \le \log(xyz) < \frac{5}{16}\log(abc) = \frac{5}{16}h .$
This proves (\ref{4-3-eq1}).

Assume that $h \ge \log(4)$, then by Theorem \ref{3-thm:ABC-type inequality}, (i) and  (\ref{4-3-eq1}),
we have
\begin{align*}
    h \le 3\log\rad(abc) + 9\sqrt{h\cdot \log(h)}
    \le \frac{15}{16}\cdot h + 9\sqrt{h\cdot \log(h)},
\end{align*}
which is impossible for $h \ge 10^6$.
Hence we must have $h < 10^6$.
The computation in \cite{code_of_zpzhou} based on Lemma \ref{4-lem:upper bound I} shows that if $h < 10^6$, then $h$ must be less than the upper bounds presented in Table \ref{3-table: Fermat upper bound 1}.
Hence the theorem is proven.
\end{proof}

By making use of the ``entirely elementary estimate'' for positive solutions of the Fermat equation presented in \cite{ExpEst}, Lemma 5.7,
we shall provide an alternative approach to prove the ``Fermat's Last Theorem'' for prime exponents $\ge 11$.

\begin{cor} \label{4-cor:FLT}
Let $$p\ge 11$$ be a prime number. 
Then there does not exist any triple $(x, y, z)$ of positive integers that satisfies the Fermat equation
$$x^p + y^p = z^p.$$
\end{cor}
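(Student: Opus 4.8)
The plan is to obtain a contradiction by confronting the upper bound $f(p,p,p)\le 573$ of Theorem~\ref{4-thm-bounds} with the ``entirely elementary estimate'' of \cite{ExpEst}, Lemma~5.7, which forces any positive solution of $x^p+y^p=z^p$ to be large in terms of $p$.

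First I would reduce to the primitive case. If $(x,y,z)$ is a positive integer solution and $d\defeq\gcd(x,y,z)$, then $d^p\mid z^p$ forces $d\mid z$, and similarly $d\mid x$, $d\mid y$; hence $(x/d,y/d,z/d)$ is again a positive solution with the same exponent $p$ and with $\gcd=1$, so we may assume $(x,y,z)\in S(p,p,p)$. I would also record that the Frey--Hellegouarch construction underlying Theorem~\ref{4-thm-bounds} applies to every prime $p\ge 11$: for a primitive Fermat triple exactly one of $x,y,z$ is even, so $v_2(x^py^pz^p)\ge p\ge 4$ and in particular $16\mid x^py^pz^p$, which supplies the ``$16\mid abc$'' alternative needed to cover the otherwise excluded exponent $p=13$ in Proposition~\ref{2-prop: the 1st ABC inequality} and Remark~\ref{2-rmk: remark of local abc-inequalities}.

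Next, since $p\ge 11\ge 8$, Theorem~\ref{4-thm-bounds} gives $\log(x^py^pz^p)\le f(p,p,p)\le 573$. On the other hand, \cite{ExpEst}, Lemma~5.7 furnishes an explicit increasing lower bound $\log(x^py^pz^p)\ge E(p)$, valid for all positive solutions and obtained from the classical coprime factorizations $x^p=(z-y)\cdot\tfrac{z^p-y^p}{z-y}$, $y^p=(z-x)\cdot\tfrac{z^p-x^p}{z-x}$, $z^p=(x+y)\cdot\tfrac{x^p+y^p}{x+y}$, which force each of $x,y,z$ to exceed a quantity growing essentially exponentially in $p$. The Corollary then follows the moment $E(p)>573$. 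For all but the first couple of primes this is immediate from the closed form of $E$; for $p=11$ and $p=13$ I would instead use either the sharpest available form of the estimate in \cite{ExpEst}, Lemma~5.7, or the observation that the computation of \cite{code_of_zpzhou} behind Theorem~\ref{4-thm-bounds}, when specialized to the symmetric signature $(p,p,p)$ with $p$ small rather than taken over the worst case with $\min\{r,s,t\}\ge 8$, returns a bound on $\log(x^py^pz^p)$ well below $573$ --- small enough to be defeated by $E(p)$. In every case the two inequalities are incompatible, so $S(p,p,p)=\emptyset$.

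The delicate point is precisely this final numerical comparison for the smallest admissible exponents $p=11,13$: there the generic abc-type upper bound and the elementary lower bound are of comparable magnitude, and the argument succeeds only once one feeds in either the optimized form of \cite{ExpEst}, Lemma~5.7, or the signature-specific (as opposed to worst-case) output of the computation underlying Theorem~\ref{4-thm-bounds}. All exponents $p\ge 17$ are then handled uniformly, with room to spare, by the two displayed bounds.
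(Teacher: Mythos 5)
Your high‑level plan matches the paper's: play the abc‑type upper bound of Theorem~\ref{4-thm-bounds} against the elementary lower bound on the size of a Fermat solution from \cite{ExpEst}, Lemma~5.7. The preliminary reductions are fine (passing to a primitive triple, noting $16\mid x^py^pz^p$, observing that $p\ge 11\ge 8$ gives $\log(x^py^pz^p)\le f(p,p,p)\le 573$). But there is a genuine gap exactly where you flag the ``delicate point,'' and you do not close it.

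The paper does \emph{not} compare $\log(x^py^pz^p)$ directly with a lower bound from Lemma~5.7, precisely because that comparison fails for $p=11,13$: Lemma~5.7 is quoted as $z>\tfrac{(p+1)^p}{2}$, which only yields $\log(z^p)>p^2\log(p+1)-p\log 2\approx 293$ for $p=11$, well below $573$. The missing idea is an amplification step. Ordering $x<y<z$ and using
\[
x^p \;=\; z^p - y^p \;\ge\; z^p - (z-1)^p \;>\; z^{p-1},
\]
hence also $y^p>x^p>z^{p-1}$, one gets
\[
x^p y^p z^p \;>\; (z-1)^{3p-1},
\]
so $\log(z-1)<\tfrac{1}{3p-1}\log(x^py^pz^p)\le \tfrac{600}{3p-1}<20$ for every $p\ge 11$. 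Now Lemma~5.7 gives $\log(z-1)>p\log(p+1)-\log 2>26$ for $p\ge 11$, a contradiction for \emph{all} $p\ge 11$ uniformly, with no case split. In your write‑up you instead suggest two untested escape routes for $p=11,13$ --- a ``sharpest available form'' of Lemma~5.7 (not the form actually quoted) or a re‑run of the underlying computation specialized to $(p,p,p)$ --- but you do not establish that either delivers a bound strong enough, and neither is what the paper does. Without the inequality $x^py^pz^p>(z-1)^{3p-1}$, your argument as written proves FLT only for $p\ge 17$.

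A secondary point: your coprime‑factorization remark is phrased as if it shows \emph{each} of $x,y,z$ exceeds an exponentially growing quantity, but the statement quoted in the paper controls only $z$. To control $x$ (and hence $y$) you again need the elementary inequality $x^p>z^{p-1}$; making that explicit would both repair the lower bound for the small primes and reduce your argument to essentially the paper's.
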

\begin{proof}
By dividing $(x,y,z)$ by their greatest common divisor, we shall assume that $(x,y,z)$ is a triple of  positive coprime integers such that $x^p + y^p = z^p$ and $x < y < z$.

Since
\begin{align*}
x^p = z^p - y^p \ge z^p - (z-1)^p > z^{p-1}, 
\end{align*}
we have 
\begin{align} \label{4-4-eq1}
x^p  y^p z^p > z^{p-1} \cdot (z^p - z^{p-1}) \cdot z^p > (z-1)^{3p-1}.
\end{align}
Since $p\ge 11$, by Theorem \ref{4-thm:conclusions about the generalized Fermat equation}, we have  $\log(x^p y^p z^p) \le 600$.
Then by (\ref{4-4-eq1}), we have
\begin{align} \label{4-4-eq2}
\log(z-1) < \frac{1}{3p-1} \cdot \log(x^p y^p z^p) \le \frac{600}{3p-1} < 20. 
\end{align}

However, by the estimate in \cite{ExpEst}, Lemma 5.7,
we have $z > \frac{(p+1)^p}{2}$. Hence for $p\ge 11$, we have 
\begin{align} \label{4-4-eq3}
\log(z-1) \ge \log(\frac{(p+1)^p}{2}) = p \cdot \log(p+1) - \log(2) > 26.
\end{align}
--- a contradiction between (\ref{4-4-eq2}) and (\ref{4-4-eq3}).
\end{proof}

\begin{tiny-remark}
(i) To prove Fermat's Last Theorem (FLT), we only need to prove that there does not exist any triple $(x, y, z)$ of positive integers that satisfies the Fermat equation $$x^n + y^n = z^n, $$
where $n=4$ or $n=p$ is a prime number $\ge 3$.

(ii) The case $n=4$ is proved by Fermat using the technique of infinite descent;
the case $ n=3$ is proved by Euler;
the case $n=5$ is proved by Dirichlet and Legendre around 1825;
the case $n = 7$ is proved by Gabriel Lamé in 1839. 
All of these cases have elementary proofs.

(iii) in Corollary \ref{4-cor:FLT}, by applying Theorem \ref{4-thm:conclusions about the generalized Fermat equation} and the elementary estimate found in \cite{ExpEst}, Lemma 5.7, we have proven FLT for prime exponents $\ge 11$.
Hence we conclude that the results of the present paper,
combined with the historical proofs for the cases $n = 3, 4, 5, 7$ montioned in (ii),
yield an unconditional new alternative proof [i.e., to the proof of \cite{Wiles}] of Fermat's Last Theorem.
\end{tiny-remark}

\begin{cor} \label{4-cor:generalized Fermat equation}
Let $r,s,t$ be positive integers such that $r,s,t \ge 20$, or $(r,s,t)$ is a permutation of $(3,3,n)$ for $n\ge 3$.
Then there does not exist any triple $(x, y, z)$ of non-zero coprime integers that satisfies the generalized Fermat equation
$$x^r + y^s = z^t.$$
\end{cor}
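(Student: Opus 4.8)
The plan is to deduce the corollary from the explicit bounds of Theorem~\ref{4-thm-bounds}, together with the classical results recorded in Theorem~\ref{4-thm:conclusions about the generalized Fermat equation} and the finite computation of \cite{code_of_zpzhou}. First I would pass from a hypothetical triple $(x,y,z)$ of non-zero coprime integers with $x^r+y^s=z^t$ to a \emph{positive} primitive solution of a permuted signature. Setting $X=|x|$, $Y=|y|$, $Z=|z|$, the hypothesis reads $\epsilon_1X^r+\epsilon_2Y^s=\epsilon_3Z^t$ with $\epsilon_i\in\{\pm1\}$; since $X^r,Y^s,Z^t$ are positive integers the three signs cannot all agree, so, after moving the appropriately-signed term to the other side, exactly one of the three powers equals the sum of the other two. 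This yields positive coprime integers $P,Q,R\ge1$ with $P^a=Q^b+R^c$, where $(a,b,c)$ is a permutation of $(r,s,t)$; equivalently $(Q,R,P)$ is a positive primitive solution of the generalized Fermat equation with signature $(b,c,a)$. A permutation of a triple all of whose entries are $\ge20$ has the same property, and a permutation of $(3,3,n)$ is again a permutation of $(3,3,n)$, so no generality is lost. Finally $P\ge2$ automatically, and by Mih\u{a}ilescu's theorem \cite{Mihilescu2004PrimaryCU} neither $Q$ nor $R$ equals $1$ — all of $a,b,c$ being $\ge3$, an equality such as $P^a-R^c=1$ with both exponents $\ge3$ is impossible — hence $P,Q,R\ge2$.

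Next I would insert this solution into Theorem~\ref{4-thm-bounds}. In the case $r,s,t\ge20$ the permuted exponents are all $\ge8$, so $h\defeq\log(P^aQ^bR^c)\le573$; combined with $P,Q,R\ge2$ this gives $a,b,c\le573/\log2<827$ and $P,Q,R\le\exp(573/20)<3\cdot10^{12}$, so only finitely many signatures $(r,s,t)$ with $20\le r,s,t\le826$ and finitely many candidate triples of bounded size remain, and their non-existence is exactly what the computational verification in \cite{code_of_zpzhou} checks. In the case where $(r,s,t)$ — hence the permuted signature $(b,c,a)$ — is a permutation of $(3,3,n)$, one has $\min\{r,s\}=3$ and $t\in\{3,n\}$ for that signature, so the relevant entries of the table in Theorem~\ref{4-thm-bounds} give $h\le24626$; since the base carrying the exponent $n$ is $\ge2$, this forces $n\le24626/\log2<35530<10^9$.

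The last step is to close the $(3,3,n)$ case without further computation: for $3\le n\le10^9$ the signature $(3,3,n)$ is solved by Chen--Siksek (Theorem~\ref{4-thm:conclusions about the generalized Fermat equation}), and since $3$ is odd the remark following that theorem shows $(3,n,3)$ and $(n,3,3)$ are solved as well; as every known non-trivial primitive solution listed there has $\min\{r,s,t\}\le2$, none of these signatures admits a solution with all three exponents $\ge3$, contradicting the existence of $(Q,R,P)$. I expect the sign-reduction and the bookkeeping of which table entry applies to be routine; the genuine input is Theorem~\ref{4-thm-bounds}. The one point that needs care is to confirm that, after the bounds above, the finite computation of \cite{code_of_zpzhou} really does range over precisely the set of signatures with $20\le r,s,t\le826$ together with the implied bound on $P,Q,R$ — that is, that the computational verification announced in the abstract matches exactly the output of this argument.
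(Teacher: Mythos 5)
Your proposal is correct and follows essentially the same route as the paper: reduce to a positive primitive solution of a permuted signature (the paper simply says ``by permuting $(r,s,t)$, we can assume that $x,y,z\ge 1$'' and then invokes Catalan/Mih\u{a}ilescu to get $x,y,z\ge 2$, exactly as you do via the sign bookkeeping), then apply Theorem~\ref{4-thm-bounds} to bound $h=\log(x^ry^sz^t)$, contradict Chen--Siksek's $n>10^9$ in the $(3,3,n)$ case, and appeal to the computational verification of \cite{code_of_zpzhou} when $r,s,t\ge 20$. Your version is merely more explicit about the sign reduction and about translating the bound $h\le 573$ into a finite search space for the computation; the mathematical content matches the paper's proof.
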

\begin{proof}
Suppose that $(x, y, z)$ is a non-trivial primitive solution to the generalized Fermat equation $x^r + y^s = z^t.$ Then by permuting $(r,s,t)$, we can assume that $x,y,z \ge 1$.
Then since $r,s,t \ge 3$, we have $x,y,z \ge 2$ by Catalan's conjecture.

(1) When $(r,s,t)$ is a permutation of $(3,3,n), n\ge 3$, we have $n > 10^9$ by \cite{Chen-Siksek}, also cf. Theorem \ref{4-thm:conclusions about the generalized Fermat equation}.
However, by Theorem \ref{4-thm-bounds} we have $\log(2^n) < \log(x^r y^s z^t) \le 24626$, hence $n < 24626 / \log(2) < 10^5$ -- a contradiction!

(2) When $r,s,t \ge 20$,  by Theorem \ref{4-thm-bounds} we have $\log(x^r y^s z^t) < 600$.
However, the computation in \cite{code_of_zpzhou} shows that for any $r, s, t \ge 20$, there does not exist any triple $(x, y, z)$ of positive coprime integers such that $x^r + y^s = z^t$ and $\log(x^r y^s z^t) < 600$ --- a contradiction!

The corollary follows from (1) and (2).
\end{proof}

\begin{tiny-remark}
In the subsequent paper, we will introduce new insights to further explore the generalized Fermat equations. We will establish new upper bounds for non-trivial primitive solutions and demonstrate the non-existence of such solution for an expanded set of signatures.
\end{tiny-remark}

\bibliographystyle{plain}
\bibliography{zpzhou.bib}

\Addresses

\end{document}